  \def\infty{infinity}%
\tikzset{cong/.style={draw=none,edge node={node [sloped, allow upside down, auto=false]{$\cong$}}},
	Isom/.style={draw=none,every to/.append style={edge node={node [sloped, allow upside down, auto=false]{$\cong$}}}}}
\newtheorem{theorem}{Theorem}[section]
\newtheorem{proposition}[theorem]{Proposition}
\newtheorem{lemma}[theorem]{Lemma}
\newtheorem{corollary}[theorem]{Corollary}
\theoremstyle{definition}
\newtheorem{definition}[theorem]{Definition}
\newtheorem{defn}[theorem]{Definition}
\newtheorem{examples}[theorem]{Example}
\newtheorem{example}[theorem]{Example}
\newtheorem{notation}[theorem]{Notation}
\newtheorem{construction}[theorem]{Construction}
\theoremstyle{remark}
\newtheorem{remark}[theorem]{Remark}
\numberwithin{equation}{section}
\renewcommand{\epsilon}{\varepsilon}
\renewcommand{\phi}{\varphi}
\newcommand{\R}{\mathbb{R}}
\DeclareMathOperator{\spec}{spec}
\DeclareMathOperator{\supp}{supp}
\newcommand{\nc}{\newcommand}
\nc{\eqn}{\begin{equation}}
\nc{\eqnn}{\begin{equation}\nonumber}
\nc{\eqnd}{\end{equation}}
\nc{\enum}{\begin{enumerate}}
	\nc{\enumd}{\end{enumerate}}
\nc{\dd}{\diamond}
\nc{\del}{\partial}
\nc{\tensor}{\otimes}
\nc{\into}{\hookrightarrow}
\nc{\defliou}{\mathcal{D}\!\operatorname{ef}}
\nc{\nbhd}{\operatorname{Nbhd}}
\nc{\lioucrit}{\mathcal{L}\!\operatorname{iou}_{\mathsf{crit}}^{\dd}}
\nc{\lioustab}{\mathcal{L}\!\operatorname{iou}^{\dd}}
\nc{\lioustr}{\mathsf{Liou}_{\mathsf{str}}}
\nc{\weincrit}{\mathcal{W}\!\operatorname{ein}_{\mathsf{crit}}^{\dd}}
\nc{\weinstab}{\mathcal{W}\!\operatorname{ein}^{\dd}}
\nc{\weinstr}{\mathsf{Wein}_{\mathsf{str}}}
\nc{\weinparam}{\mathsf{Wein}_{\mathsf{param}}}
\nc{\subcrit}{\mathfrak{s}}
\nc{\eqs}{\mathfrak{eq}}
\def\cC{\mathcal C}\def\cD{\mathcal D}
\def\cE{\mathcal E}
\def\cO{\mathcal O}
\def\cW{\mathcal W}
\def\CC{\mathbb C}
\def\EE{\mathbb E}
\def\RR{\mathbb R}\def\SS{\mathbb S}
\def\ZZ{\mathbb Z}
\newcommand{\DMO}{\DeclareMathOperator}
\DMO{\id}{id}
\DMO{\ob}{ob}
\DMO{\fun}{Fun}
\DMO{\Mod}{Mod}
\DMO{\End}{End}
\DMO{\crit}{Crit}
\DMO{\skel}{skel}
\DMO{\flex}{Flex}
\DMO{\movie}{movie}
\DMO{\colim}{colim}
\begin{document}
	
	\title{Localization and flexibilization in symplectic geometry}
	
	\author{Oleg Lazarev, Zachary Sylvan, and Hiro Lee Tanaka}

	\begin{abstract}
		We introduce the critical Weinstein $\infty$-category -- the result of 
		stabilizing the category of Weinstein sectors and inverting subcritical morphisms -- and for every finite collection $P$ of integers, construct a {\em $P$-flexibilization} endofunctor. 	Our main result is that $P$-flexibilization is an idempotent localization functor of the critical Weinstein $\infty$-category, 
		allowing us to characterize the essential image of the endofunctor by a universal property.		
		This localization has the effect of replacing every Weinstein sector with one in which $P$ is invertible in the wrapped Fukaya category and hence is a symplectic analogue of topological localization of Bousfield and Sullivan, answering a question of Abouzaid and Seidel. When $P = \{0\}$, our construction recovers Cieliebak and Eliashberg's  flexibilization procedure. Moreover, we show that $P$-flexibilization is symmetric monoidal as a functor of higher categories, and hence gives rise to a new way of constructing $E_\infty$-commutative algebra objects from symplectic geometry. 
	\end{abstract}
	\maketitle

	\tableofcontents
	
	\section{Introduction}

	In this work we introduce the \textit{critical} $\infty$-category of stable Weinstein sectors 
	\eqnn
	\weincrit
	\eqnd
	whose objects are exact symplectic manifolds with favorable corner structures, admitting symplectically well-behaved cellular decompositions, and identified up to taking products with $T^*[0,1]$. The morphism spaces are obtained by formally inverting a class of morphisms realizing subcritical handle attachments. Along the way, we also invert maps resulting from Weinstein homotopies; see Definition~\ref{def:sub_morphism}. This is a natural context for studying wrapped Fukaya categories, which are known to remain unchanged by stabilization,  subcritical handle attachments, and Weinstein homotopies \cite{ganatra_generation}. 
    
Using $\weincrit$ enables us to prove our main result: There is a symplectic geometric model for the rich algebraic operation of inverting an integer (in fact, any finite collection P of integers). Namely, given any Weinstein sector $X$, we construct its {\em $P$-flexibilization} $X[P^{-1}]$, whose Fukaya category is equivalent to that of $X$ after inverting $P$. 
We show that $P$-flexibilization is both functorial (Lemma~\ref{lemma. product is functor}) and multiplicative (Theorem~\ref{theorem. P-flexibilization is symmetric monoidal}) in $\weincrit$. 

Our work generalizes and renders functorial Murphy and Cieliebak-Eliashberg's flexibilization~\cite{CE12} (without relying on h-principles), Abouzaid-Seidel's homologous recombination~\cite{abouzaid_seidel_recombination}, and work of Lazarev-Sylvan~\cite{Lazarev_Sylvan}.  Upon a construction of the conjectural {\em spectral} wrapped Fukaya category, our methods are expected to yield purely symplectic constructions of various  localizations of the stable homotopy category, and symmetric monoidally so. 
	In the reverse direction, the critical $\infty$-category also allows us to give a clean categorical description of previously known geometry: Flexibilization {\em is} a localization 
    (Theorem~\ref{theorem. intro P flex is a localization}). A key mathematical step in the proof of our result is comparing two symplectic operations -  taking direct product with a particular sector and carving out of Lagrangian disks - and proving that they are equivalent in $\weincrit$; see Section \ref{sec: proof_outline} for a proof outline.     
	
	\begin{remark}
	Central to our constructions is a class of exact symplectic manifolds called Weinstein domains.  These domains can be equipped with symplectic handle-body decompositions analogous to the cellular decomposition of CW complexes.
	We will often pass between ``Weinstein sectors'' and ``Weinstein domains equipped with a Weinstein hypersurface'' (these data are equivalent; see Section~\ref{section. stopped domains are sectors}). 
	\end{remark}

    As we explain in the next sections, our result can be viewed as both a symplectic analog of Bousfield \cite{bousfield-localization-1975} and Sullivan's \cite{Sullivan_localization} localization construction in classical  topology and also a generalization of the h-principle for flexible Weinstein structures \cite{CE12, Murphy11} (which we reprove in our setting). Whereas the h-principle for flexible Weinstein structures shows that flexibilization projects away all symplectic information (up to homotopy), we prove that  P-flexibilization projects away \textit{some} symplectic information and hence provides a geometric interpolation between flexibility and rigidity.
	
	\subsection{Background and motivation}
	Because this work produces a symplectic analog of topological localization that simultaneously generalizes symplectic flexibilization, we review both ideas briefly.

	\subsubsection{Localization in classical topology}\label{section: intro_localization_classical}
	Localization in algebra and topology allows one to study global phenomena one prime at a time. 	
    Fix a prime $p$.
	In algebraic topology, a concrete way to localize a simply-connected space $X$ is to begin with a CW presentation of $X$ and replace all standard CW cells of $X$ by ``$p$-cells" to create a new CW complex $X[p^{-1}]$. (See the classic works of Sullivan~\cite{sullivan-genetics, sullivan-notes}.) 
	This operation satisfies the following properties:

	\newenvironment{localization-props}{
		\renewcommand*{\theenumi}{(T\arabic{enumi})}
		\renewcommand*{\labelenumi}{(T\arabic{enumi})}
		\enumerate
	}{
		\endenumerate
	}
	
	\begin{localization-props}
		\item\label{item. homotopy invariance}  \textbf{Homotopy invariance:} The assignment  $X \mapsto X[p^{-1}]$  preserves homotopy equivalences. In particular, while  
         the above description  of $X[p^{-1}]$ depends on the CW presentation of $X$, a posteriori the homotopy type of $X[p^{-1}]$ depends  only on the homotopy type of $X$.
		
		\item\label{item. homology localizes} \textbf{Localization on homology: } There is a map $\eta_X: X \to X[p^{-1}]$ exhibiting the homology groups of $X[p^{-1}]$ as the localization away from p of the homology groups of $X$.  
        So $\eta_X$ induces an isomorphism  $H_i(X[p^{-1}]; \mathbb{Z}) \cong H_i(X; \mathbb{Z}) \otimes \mathbb{Z}[p^{-1}]$ for $i > 0$.
		
		\item\label{item. idempotency}  \textbf{Idempotency: } The map $\eta_{X[p^{-1}]}$ is a 
        homotopy equivalence $X[p^{-1}] \xrightarrow{\sim} (X[p^{-1}])[p^{-1}]$.
		
	\end{localization-props}

	We have mentioned Sullivan's construction as it parallels the construction $X[P^{-1}]$ 
    (for $P=\{p\}$)  for a Weinstein $X$
 closely.
	A different construction allows one to include all (not necessarily simply-connected) CW complexes $X$ and is manifestly homotopy invariant by construction; see for example Bousfield's work~\cite{bousfield-localization-1975}.

		Let us point out two related but distinct notions of localization that appear above. In \ref{item. homology localizes}, we describe a construction that inverts certain multiplicative operations on algebraic invariants ({eg }multiplication by $p$), which is a localization of groups. 
        Upon passage to the $\infty$-category of simply-connected spaces, \ref{item. idempotency} 
        may be promoted to an idempotent endofunctor. 
        As we explain in Section \ref{sec: category}, an idempotent endofunctor is called a (categorical) \textit{localization} since it projects the entire category to some subcategory of `local' objects, in the process throwing away some information from the category. Furthermore, the idempotency of taking algebraic invariants in \ref{item. homology localizes}  follows from the idempotency of the geometric construction in \ref{item. idempotency}. In the next sections, we describe similar phenomena in symplectic geometry.

	\subsubsection{Flexibilization in symplectic geometry}
	\label{section: flexibilization_intro}
	Symplectic flexibility refers to phenomena where the underlying smooth topology (plus the class, up to smooth homotopy through non-degenerate 2-forms, of the symplectic form) determines the symplectic geometry. One of the first instances of symplectic flexibility is Gromov's h-principle~\cite{gromov_hprinciple} for \textit{subcritical} isotropic submanifolds -- {ie }isotropic submanifolds whose dimensions are less than half the ambient dimension. If two subcritical isotropics are isotopic through smoothly embedded submanifolds (covered by a homotopy of their tangent bundles to isotropic subbundles),  then they are isotopic through isotropics. 		
	Consequently, subcritical Weinstein domains, which are by definition built out of handles attached along subcritical isotropics, are also determined by their smooth topology. 
	
	Gromov's result~\cite{gromov_hprinciple} was generalized by Cieliebak-Eliashberg~\cite{CE12} and Murphy~\cite{Murphy11}, who 
	showed that there is a special class of \textit{flexible} Weinstein structures satisfying the following h-principle: If two Weinstein manifolds $X_1, X_2$ with flexible structures are diffeomorphic (plus a bit of bundle-theoretic data), then they are actually symplectomorphic. 
	In fact, for any Weinstein manifold $X$, Cieliebak and Eliashberg, Ch.7~\cite{CE12}, construct the \textit{flexibilization} $X_{flex}$ of $X$, a flexible Weinstein manifold that is diffeomorphic (but not symplectomorphic) to $X$. Just as with Sullivan's model of localization for CW complexes, $X_{flex}$ is constructed by replacing all standard Weinstein handles of $X$ with `flexible' Weinstein handles, where attaching spheres are \textit{loose Legendrians} \cite{Murphy11}. $X_{flex}$ has the following properties, analogous to the properties of localization of topological spaces: 
	
	\newenvironment{flex-props}{
		\renewcommand*{\theenumi}{(F\arabic{enumi})}
		\renewcommand*{\labelenumi}{(F\arabic{enumi})}
		\enumerate
	}{
		\endenumerate
	}
	\begin{flex-props}
		\item\label{item. flex homotopy invariance} \textbf{Homotopy invariance:} $X_{flex}$ depends only the Weinstein homotopy type of $X$ (in fact, only the diffeomorphism type of $X$, plus a bit of tangential data). In particular, though the construction of $X_{flex}$ depends on the Weinstein presentation of $X$, its Weinstein homotopy type depends a posteriori only on the Weinstein homotopy type of $X$.
		
		\item\label{item. flex localizes} \textbf{Fukaya category localizes:} $X_{flex}$ is a Weinstein subdomain of  $X$ \cite{Lazarev_crit_points}  and the wrapped Fukaya category of $X_{flex}$ is trivial, so $\mathcal{W}(X_{flex}) \cong 0 \cong \mathcal{W}(X)\otimes \mathbb{Z}[\frac{1}{0}]$.
		
		\item\label{item. flex idempotence}  \textbf{Idempotency: }
		One can arrange for the subdomain inclusion $(X_{flex})_{flex} \into X_{flex}$ to be a Weinstein homotopy equivalence.
		
	\end{flex-props}
	Property~\ref{item. flex localizes} indicates that flexibilization localizes invariants away from the integer zero. The proofs of the other two properties crucially rely on the h-principle for flexible Weinstein structures \cite{CE12}. In particular, the idempotency of the flexibilization reflects the fact that this construction projects to the symplectically trivial class of flexible objects, {ie }all symplectic information is projected away (up to Weinstein homotopy).  Next, we discuss P-flexibilization, which does \textit{not} satisfy an h-principle but by our main result (see Corollary \ref{cor: intro_X_P_properties}) is still idempotent and therefore projects away \textit{some} symplectic information.

	\subsubsection{Toward $P$-flexibilization}
	\label{section. toward p-flex}

In Section \ref{section: intro_localization_classical}, we reviewed Sullivan's localization of topological spaces away from a collection of integers $P$ and emphasized its key properties (T1), (T2), (T3). 
In Section \ref{section: flexibilization_intro}, we explained how flexibilization achieves a symplectic analog of this localization in the case when $P = 0$, with analogous properties (F1), (F2), (F3).
Our main goal is to develop a P-flexibilization construction which achieves a symplectic analog of Sullivan's result for a \textit{non-zero} set of integers $P$. In particular, given a Weinstein sector $X$, we will construct a new sector $X[P^{-1}]$ with the following analogous properties; one key difference throughout is the necessity to work in the critical Weinstein category $\weincrit$.

\newenvironment{Pflex-props}{
		\renewcommand*{\theenumi}{(P\arabic{enumi})}
		\renewcommand*{\labelenumi}{(P\arabic{enumi})}
		\enumerate
	}{
		\endenumerate
	}
	\begin{Pflex-props}
		\item
        \label{item. Pflex homotopy invariance}
        \textbf{Homotopy invariance:}  Up to stabilization and subcritical handle attachment, $X[P^{-1}]$ depends only the Weinstein homotopy type of $X$. In particular, though the construction of $X[P^{-1}]$ depends on the Weinstein presentation of $X$, its 
       isomorphism class in $\weincrit$ depends a posteriori only on the Weinstein homotopy type of $X$. See Corollary \ref{cor: intro_X_P_properties} below. 
				
		\item
        \label{item. Pflex localizes} 
        \textbf{Fukaya category localizes:} $X[P^{-1}]$ is a Weinstein subdomain of  $X$  and the wrapped Fukaya category of $X[P^{-1}]$ is the localization of that of $X$ away from $P$, {ie }    
        $\mathcal{W}(X[P^{-1}])  \cong \mathcal{W}(X)\otimes \mathbb{Z}[\frac{1}{P}]$. See the discussion below, and reference to ~\cite{Lazarev_Sylvan}. 
		
		\item
        \label{item. Pflex idempotence} \textbf{Idempotency: }
		One can arrange for the subdomain inclusion $(X[P^{-1}])[P^{-1}] \into X[P^{-1}]$ to induce an isomorphism in $\weincrit$. See Corollary \ref{cor: intro_X_P_properties} below. 
	\end{Pflex-props}
Furthermore, our construction has the property that if $P = \{1\}$, then 
$X[\{1\}^{-1}]$ is  $X$ with some subcritical handles attached, and hence isomorphic to $X$ in $\weincrit$; if $P = \{0\}$, then $X[\{0\}^{-1}]$ is flexible; see Section \ref{section. discussion of P flex} for discussion of these examples.

Before giving precise and functorial versions of these properties, we first provide some context for P-flexiblization. For a collection of integers $P$, Abouzaid and Seidel~\cite{abouzaid_seidel_recombination} showed that for any Weinstein domain $X$ with $\dim X \ge 12$,  there is a Weinstein domain $X[P^{-1}]^{AS}$ diffeomorphic to $X$ that abstractly admits a group isomorphism between symplectic cohomologies as in property \ref{item. Pflex localizes}. Their work further asked whether this construction can be viewed as a symplectic analog of topological localization; see Remark 3.14 of \cite{abouzaid_seidel_recombination}.	
	However, it was not (and is still not) clear whether there is a geometrically defined map between $X$ and $X^{AS}[P^{-1}]$ whose induced map on $SH$ realizes $SH(X[P^{-1}]^{AS})$ as the $P$-inversion of $SH(X)$. To remedy this, the first two authors~\cite{Lazarev_Sylvan} introduced a variant construction $X[P^{-1}]$ (which they conjectured is equivalent to $X[P^{-1}]^{AS}$) defined for any Weinstein sector $X$ with $\dim X\ge 10$.

	Because we will need it momentarily, let us recall the construction of $X[P^{-1}]$ in the case $X = T^*D^n$ (which also gives the local construction for arbitrary $X$). For any collection $P$ of primes, and for $n\ge 5$, one first creates a regular Lagrangian disk $D_P \subset T^*D^n$ built out of a $P$-Moore space $M_P$ (really, a wedge of $p$-Moore spaces for $p\in P$). The sector 
		\eqn\label{eqn. P-localized T*D}
		T^*D^n[P^{-1}] := T^*D^n \setminus D_P
		\eqnd is constructed by carving out this disk and endowing the result with an appropriate Weinstein structure; see Section \ref{sec: abouzaid_seidel}. 
For example, if $P = \{1\}$, then $(T^*D^n)_1$ is $T^*D^n$ with a subcritical handle attached and hence isomorphic to $T^*D^n$ in $\weincrit$; if $P$ is the empty set, we can define $D_P$ to be the empty Lagrangian disk, in which case $(T^*D^n)_\varnothing = T^*D^n$. On the other hand, if $P = \{0\}$, then $(T^*D^n)_0$ is flexible; see Section \ref{section. discussion of P flex} for more examples. 

		For general $X$, we perform this construction in a neighborhood $T^*D^n$ of every co-core of $X$ (which uses the data of the Weinstein structure of $X$).  This remedies the above issue as follows: $X[P^{-1}]$  is a Weinstein subdomain of $X$, and the resulting Viterbo functor realizes the wrapped category of $X[P^{-1}]$ as a localization inverting $P$~\cite{Lazarev_Sylvan}.	 Hence $X[P^{-1}]$ achieves property (P2) above.       
The naive intuition is that by removing this disk, we ``kill'' objects representing the Moore space, thereby inverting $P$.
	Because of the parallel with the cell-by-cell construction of~\cite{sullivan-genetics, sullivan-notes}, we refer to $X[P^{-1}]$ as the {\em Sullivan-style} construction.

The difficulty is that $X[P^{-1}]^{AS}$ and $X[P^{-1}]$ depend very much on the Weinstein presentation of $X$ ({eg }to identify the cocores of $X$; see Example \ref{example: non-homotopy}). As a result, these constructions are not homotopy invariant, as required for property (P1). 
    As discussed in Section \ref{section: flexibilization_intro}, the h-principle was crucial to showing that flexibilization is independent of the presentation. 
    However, an h-principle  \textit{cannot} exist for $X[P^{-1}]^{AS}$ or $X[P^{-1}]$ since there are plenty of diffeomorphic Weinstein domains $X, Y$ for which $X[P^{-1}], Y[P^{-1}]$ are not symplectomorphic, {eg }the exotic cotangent bundles  from~\cite{abouzaid_seidel_recombination} or~\cite{EGL}. This is because  $X[P^{-1}]^{AS}$ or $X[P^{-1}]$ still retain non-trivial Fukaya categories at primes other than $P$, unlike the flexibilization $X_{flex}$.
    For similar reasons, the h-principle cannot be applied to establish idempotency property (P3).	
	Thus, a new notion of equivalence will be needed to articulate (P1) and (P3), which we achieve by working in $\weincrit$.

	\subsection{P-flexibilization is a localization}
	
	In this paper, we propose that the natural notion of equivalence needed to prove (P1) and (P3)    is generated by the two operations of stabilization and subcritical morphisms. (These change the symplectic geometry only as much as the smooth topology is changed and, as we mentioned, do not change the wrapped Fukaya category.)
	The critical Weinstein $\infty$-category $\weincrit$ is precisely the minimal $\infty$-category for which these operations are homotopy-invertible. (See Section~\ref{sec: comparison_P_flexibilization} for details).
	
	We caution the reader that, for the rest of this introduction, ``Weinstein domain'' means a domain admitting (but not equipped with) a Weinstein Lyapunov function.
	
	Fix a finite set of non-negative  integers $P$ and let $T^*D^n[P^{-1}]$ be as in~\eqref{eqn. P-localized T*D}
    so $n$ is some fixed integer larger than 5. 
    Consider the functor 
	\eqn\label{eqn. functor times T*D^nP}
	- \times (T^*D^n[P^{-1}]): \weincrit \rightarrow \weincrit,
	\qquad
	X \mapsto X \times (T^*D^n[P^{-1}])
	\eqnd
	taking any Weinstein domain to its direct product with $T^*D^n[P^{-1}]$. 
By Proposition \ref{prop: T^*D[P^{-1}] independent of n}, this functor is independent of $n$ and depends only on the collection of integers $P$.

  We note that any proper cylindrical Lagrangian in $T^*D^n$, like $D_P \subset T^*D^n$, is disjoint from some cotangent fiber of $T^*D^n$, for example the cotangent fibers of points near the boundary $\partial D^n$ of $D^n$. By considering a Weinstein tubular neighborhood of this cotangent fiber, we obtain a proper inclusion $i: T^*D^n \hookrightarrow T^*D^n$ whose image is disjoint from $D_P$ and therefore a proper inclusion $T^*D^n \to T^*D^n[P^{-1}] = T^*D^n \backslash D_P$.  This proper inclusion induces a natural transformation
	\eqnn
	\eta: - \times T^*D^n \rightarrow - \times (T^*D^n[P^{-1}])
	\eqnd
	from the identity functor.\footnote{By construction, in $\weincrit$, any sector $X$ is naturally identified with its stabilizations $X \times T^*D^n$ for any $n \geq 0$.}
	
	\begin{theorem}\label{thm: intro_idempotent_functor}
		The functor~\eqref{eqn. functor times T*D^nP} is idempotent. More precisely, 
		the natural transformation $\eta$ evaluated at $T^*D^n[P^{-1}]$
		\eqn\label{eqn. eta on DP}
		\eta_{T^*D^n[P^{-1}]}: (T^*D^n[P^{-1}]) \times (T^*D^n) \rightarrow (T^*D^n[P^{-1}])\times (T^*D^n[P^{-1}])
		\eqnd
    is a sectorial equivalence up to stabilization and subcritical handle attachment. In particular, 
	\eqref{eqn. eta on DP} is an isomorphism in $\weincrit$.
	\end{theorem}
	
	The above theorem is the geometric fact that gives rise to all categorical results of our paper. As an immediate consequence we see that, up to subcritical equivalence and stabilization, direct product with $T^*D^n[P^{-1}]$ is a natural notion of a $P$-flexibilization of $X$. Indeed, the critical analogue \ref{item. Pflex homotopy invariance} of \ref{item. flex homotopy invariance} is obviously satisfied because taking direct products preserves (Weinstein homotopy) equivalences. The transformation $\eta_X: X \times T^*D^n \to X \times (T^*D^n[P^{-1}])$, by virtue of the Kunneth theorem, realizes~\ref{item. Pflex localizes}.\footnote{The proper inclusion $T^*D^n \to T^*D^n[P^{-1}]$ exhibits $Tw\ \cW( T^*D^n[P^{-1}])$ as the localization of $Tw\ \cW(T^*D^n) \simeq Tw \ \mathbb{Z}$ away from $P$ by \cite{Lazarev_Sylvan}.} Idempotency of~\eqref{eqn. functor times T*D^nP} is~\ref{item. Pflex idempotence}.

	Now we have two potential candidates for $P$-flexibilization; that is, two sectors $X[P^{-1}]$ and $X \times (T^*D^n[P^{-1}])$ with equivalent wrapped Fukaya categories. Our second main result is that these seemingly different constructions are in fact naturally  equivalent in the critical category, giving a geometric explanation for this algebraic equivalence.
	
	\begin{theorem}[A special case of Theorem~\ref{thm: comparison}]
		\label{thm: intro_comparison}
		Let $X$ be a Weinstein sector with $\dim X = 2n \ge 10$. Then for any choice of Weinstein structure on $X$, there is an equivalence $\phi_X: X[P^{-1}]  \tilde{\rightarrow} X \times (T^*D^n[P^{-1}])$ in $\weincrit$ satisfying the following: For every $i: X\hookrightarrow Y$ a strict proper inclusion 
		of Weinstein sectors, there is a homotopy commutative diagram in $\weincrit$:\footnote{$i_{D_P}$ is defined later as the map $i_L$ in Notation~\ref{notation. i_L}, setting $L = D_P$.}
		\eqn
		\begin{tikzcd} 
		X[P^{-1}] \arrow{rrr}{i_{D_P}}
		\arrow{d}{\phi_X} & & &
		Y[P^{-1}]  \arrow{d}{\phi_Y}\\
		X \times (T^*D^n[P^{-1}])  \arrow{rrr}{i \times Id_{T^*D^n[P^{-1}]}}	
		&&& 		
		Y \times (T^*D^n[P^{-1}]).
		\end{tikzcd}
		\eqnd
	\end{theorem}
    We note that $X[P^{-1}]$ and $X \times (T^*D^n[P^{-1}])$ have different dimensions and hence comparing them requires stabilization.

	Accordingly, we propose the following definition:
	\begin{defn}
		A Weinstein sector $X$ is \textit{$P$-flexible} if $X$ is equivalent in $\weincrit$ to an object in the image of~\eqref{eqn. functor times T*D^nP}. In other words, $X$ is $P$-flexible if -- up to stabilization and subcritical handle attachment/removal -- $X$ admits a $T^*D^n[P^{-1}]$ factor.
	\end{defn}
	This property does not depend on a Weinstein presentation of $X$, unlike the classical definition of flexibility \cite{CE12}. We note that when $P = \emptyset$ or $\{1\}$ every Weinstein sector is $P$-flexible since in this case, $T^*D^n[P^{-1}]$ is equivalent to $T^*D^n$ in $\weincrit$; see Example \ref{example: X1 is X} for discussion.

	We can now combine Theorem \ref{thm: intro_comparison} and Theorem \ref{thm: intro_idempotent_functor} to prove \ref{item. Pflex homotopy invariance} and \ref{item. Pflex idempotence} for the $X[P^{-1}]$ construction:
	
	\begin{corollary}[Porism]\label{cor: intro_X_P_properties}
		Fix two Weinstein sectors $X$ and $Y$. If $X, Y$ are isomorphic up to Weinstein (Liouville) homotopy, then $X[P^{-1}], Y[P^{-1}]$ are isomorphic up to Weinstein (Liouville) homotopy,  stabilization and subcritical handle attachment. Furthermore, $(X[P^{-1}])[P^{-1}]$ is isomorphic to $X[P^{-1}]$ up to Weinstein homotopy, stabilization and subcritical handle attachment. 
	\end{corollary}
	
	(By an isomorphism up to Weinstein homotopy, we mean a diffeomorphism $f: X \to Y$ such that the pulled back Weinstein structure may be endowed with a homotopy of Weinstein structures to the Weinstein structure of $X$; in other literature \cite{CE12} this is called a Weinstein equivalence, a term we do not use here to avoid confusion.)

	Now we consider the case $P = \{0\}$.
    By the discussion in Example \ref{example: X0 is flexible}, $X[\{0\}^{-1}]$ is flexible in the sense of Cieliebak-Eliashberg~\cite{CE12}, and is diffeomorphic to $X$ up to some smooth subcritical handles.  Corollary \ref{cor: intro_X_P_properties} gives a new proof that this flexibilization is homotopy invariant and idempotent, up to stabilization and subcritical handles.   
	\begin{corollary}\label{cor: intro_flex}
		If $X, Y$ are isomorphic up to Weinstein (Liouville) homotopy, then $X[\{0\}^{-1}]$ and $Y[\{0\}^{-1}]$ are isomorphic up to Weinstein (Liouville) homotopy, stabilization, and subcritical handles. Furthermore, $(X[\{0\}^{-1}])[\{0\}^{-1}]$ is isomorphic to $X[\{0\}^{-1}]$, up to Weinstein homotopy,  stabilization, and subcritical handles. 
	\end{corollary}
	
	The main novel feature of our proof
	is that it does not use the h-principle for flexible domains \cite{CE12} and loose Legendrians \cite{Murphy11} and hence presents a new approach to studying flexibility.
	We refer to Section~\ref{section. discussion of P flex} for examples and  further discussion. See also Section \ref{sec: proof_outline} for the role h-principles (do or do not) play in our paper. \\
	
	We turn to more structural results. The fact that $P$-flexibilization is an idempotent functor (Theorem~\ref{thm: intro_idempotent_functor}) immediately implies the following categorical fact: 	
	\begin{theorem}\label{theorem. intro P flex is a localization}
		The functor~\eqref{eqn. functor times T*D^nP} is a localization.
	\end{theorem}
	In other words, the image of~\eqref{eqn. functor times T*D^nP}---otherwise known as the $\infty$-category of Weinstein sectors critically divisible by $T^*D^n[P^{-1}]$---is characterized by a universal property: Any functor from $\weincrit$ that does not distinguish a sector from its $P$-flexibilization (more precisely, that sends $\eta$ to equivalences) automatically factors through this image. 
    See~\ref{section. idempotent localization}.
    Theorem~\ref{theorem. intro P flex is a localization} shows that the geometric operation of direct product with $(T^*D)[P^{-1}]$ has the same algebraic property that $P$-inversion does.

	\begin{remark}
		A functor being a localization is an a priori distinct notion from a functor inducing a localization of invariants. It is natural to ask whether $P$-flexibilization (and its versions equipped with tangential structures allowing for the definition of Floer-theoretic linear invariants) is the universal functor localizing wrapped-Floer invariants (and their spectral versions). We conjecture that it is.
	\end{remark}
	
	\subsection{P-flexibilization is symmetric monoidal}

	In Section \ref{sec: category}, we will see that $\weincrit$ has a symmetric monoidal structure given by direct product of sectors. Idempotency, together with the fact that~\eqref{eqn. functor times T*D^nP} is induced by direct product with an object of $\weincrit$, implies the following:
	\begin{theorem}\label{theorem. P-flexibilization is symmetric monoidal}
		The localization~\eqref{eqn. functor times T*D^nP} may be promoted to be a symmetric monoidal functor to its image.
	\end{theorem}
	The fact that we can formally obtain such symmetric monoidal structures greatly simplifies applications to high algebra; we refer the reader to Section~\ref{section. higher algebra}. For now, let us mention a geometric curiosity. 
	By Theorem~\ref{theorem. P-flexibilization is symmetric monoidal}, the symmetric monoidal unit of $\weincrit$, $T^*D^n$, has image given by the symmetric monoidal unit in the target, $T^*D^n[P^{-1}]$. Purely formally, we obtain:
	\begin{corollary}\label{corollary. P inverted disks are commutative}
		$T^*D^n[P^{-1}]$ is a commutative algebra (that is, an $E_\infty$-algebra) in $\weincrit$.
	\end{corollary}

    See Remark~\ref{remark. Eoo algebra} for a reference regarding commutative algebras ({ie }$E_\infty$-algebras) in a symmetric monoidal $\infty$-category.
    In the world of symplectic geometry, it is rare to find algebras that are commutative for geometric reasons, in contrast to objects that are {\em algebraically} forced to be commutative, such as the commutative version~\cite[Introduction]{ng-Loo} of  Legendrian dgas.
	Most commutative symplectic objects arise from SYZ fibrations, or standard variations thereof ({eg }$\RR^n$-fibers as opposed to torus fibers). At present, we do not know if the commutative structure on $T^*D^n[P^{-1}]$ from Corollary~\ref{corollary. P inverted disks are commutative} arises from such fibrations.

	Our final main result is that our results above remain true if we wish to geometrically nullify arbitrary finite CW complexes, and not just $P$-Moore spaces. This is because any finite CW complex may be represented by a regular Lagrangian disk and our results hold in this generality. 
	
	\begin{theorem}
		\label{theorem. localization along spectra}
		For any regular Lagrangian disk $L \subset T^*D^n$, the functor $-\times (T^*D^n)\setminus L$ is a symmetric monoidal localization of $\weincrit$.
		
		In particular, for any finite CW complex $K$, there is a Weinstein sector $(T^*D^n)_K := T^*D^n \setminus D_K$ so that $-\times (T^*D^n)_K$ is a symmetric monoidal localization of $\weincrit$.
	\end{theorem}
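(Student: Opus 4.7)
The plan is to reduce Theorem \ref{theorem. localization along spectra} to Theorem \ref{thm: intro_idempotent_functor} by observing that the argument establishing the localization property of $-\times (T^*D^n)_P$ uses only that $D_P$ is a regular Lagrangian disk, not the specific Moore-space presentation underlying it.

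First I would define the natural transformation $\eta:\mathrm{Id}\to -\times (T^*D^n)\setminus L$ exactly as in the case $L=D_P$. Concretely, for any stable Weinstein sector $X$, after identifying $X\simeq X\times T^*D^n$ in $\weincrit$ (using that stabilization is inverted), the map $\eta_X$ is realized by the morphism $X\times T^*D^n \to X\times (T^*D^n)\setminus L$ obtained by applying the subdomain inclusion $(T^*D^n)\setminus L \subset T^*D^n$ to the second factor. Regularity of $L$ is exactly what is needed for $(T^*D^n)\setminus L$ to be a legitimate Weinstein sector and for the inclusion to be a morphism in $\weinstr^\dd$; the internal topology of $L$ plays no role in the definition of $\eta$ or in its naturality.

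The core step is verifying the localization criterion
\[
\eta_{(T^*D^n)\setminus L}:\ (T^*D^n)\setminus L\times T^*D^n\ \longrightarrow\ (T^*D^n)\setminus L\times (T^*D^n)\setminus L
\]
is an equivalence in $\weincrit$. Morally, once $L$ has been carved out of the first factor, carving it out a second time in the second factor should be redundant up to stabilization and subcritical handles. I expect the argument to proceed by using the h-principle for subcritical isotropics (alluded to in the remark following Theorem \ref{thm: intro_idempotent_functor}) to isotope the second copy of $L$ through a subcritical family into a position parallel to a stabilization of the $L$ already absent from the first factor, thereby recognizing the extra carving as a subcritical modification of the kind inverted in $\weincrit$. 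The symmetric monoidal upgrade is then formal: an idempotent algebra object in the symmetric monoidal $\infty$-category $\weincrit$ automatically produces a smashing (and hence symmetric monoidal) localization onto its image.

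For the second claim, it suffices to exhibit, for every finite CW complex $K$, a regular Lagrangian disk $D_K\subset T^*D^n$ for some $n$. One builds $D_K$ inductively from the cellular filtration of $K$, attaching isotropic handles along the attaching spheres of the cells; the h-principle for subcritical isotropics guarantees that these attaching spheres can be realized isotropically once $n$ is taken sufficiently large, and the resulting disk is regular by construction. The first part of the theorem then applies to $L = D_K$. The hard part of the plan is the geometric claim in the previous paragraph: producing the required subcritical isotopy in the product $(T^*D^n)\setminus L \times T^*D^n$ that carries the second copy of $L$ onto a stabilized parallel of the first. Once this is in hand, both parts of the theorem follow formally from the $\infty$-categorical framework of $\weincrit$.
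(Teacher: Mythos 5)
Your top-level reduction is correct and is in fact exactly what the paper does: Theorem \ref{thm: idempotency_functor} is stated and proved for an \emph{arbitrary} regular Lagrangian disk $L\subset T^*D^n$, and the symmetric monoidal upgrade is the formal consequence recorded in Corollary \ref{corollary. idempotent objects are algebras}. The gap lies in what you yourself flag as ``the hard part'': the verification that $\eta_{(T^*D^n)\setminus L}$ is an equivalence is left as an expectation, and the mechanism you gesture at is not the one that works. The h-principle for subcritical isotropics is not used to ``isotope the second copy of $L$ into a position parallel to'' the first; it is used (Proposition \ref{prop: two_cocores}) to normalize the Weinstein presentation of $T^*D^{n+1}$, after one stabilization, so that its only co-cores are a single cotangent fiber $T^*_xD^{n+1}$ and $L\times T^*_0D^1$ --- without this normalization one has no control over where the second carving takes place. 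The parallelism of the two copies of $L$ is then achieved by an explicit unlinking-after-stabilization isotopy (Propositions \ref{prop: unlinking_stabilization} and \ref{prop: parallel_links_unknot}), not by an h-principle. Most importantly, you never say \emph{why} a redundant carving is a subcritical morphism. The paper's answer (Proposition \ref{prop: theta_equiv}) is that once the second copy of $L$ is recognized as a Lagrangian unknot in the complement of the first, it can be realized as the co-core of an index-$n$ handle in cancelling position with an index-$(n-1)$ handle, so carving it out is literally the attachment of a subcritical handle to a smaller copy of the sector. Without this step, ``recognizing the extra carving as a subcritical modification'' is an assertion rather than an argument.

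A second omission: you compare $(T^*D^n\setminus L)\times T^*D^n$ with $(T^*D^n\setminus L)\times(T^*D^n\setminus L)$ directly, but the latter is obtained by removing an entire product region $(T^*D^n\setminus L)\times\nu(L)$, not a single Lagrangian disk. Identifying this, up to subcritical morphisms and stabilization, with the carving of the product Lagrangians $C\times L$ over the co-cores $C$ of the first factor is the content of Proposition \ref{prop: commutative_diagram_sector_localization} and Theorem \ref{thm: idempotency_comparison}, and is a necessary intermediate step before any unknottedness argument can be applied. Finally, for the second claim, the paper obtains $D_K$ as the Abouzaid--Seidel graphical disk $D_U$ associated to a regular neighborhood $U\subset S^{n-1}$ of an embedding of $K$; your proposed inductive handle-attachment construction does not obviously produce a disk, and ``regular by construction'' would need justification.
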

	
	In fact, the equivalence (Theorem~\ref{thm: intro_comparison}) between the direct product model and the Sullivan-style model of $P$-flexibilization extends also in this generality. See Theorem~\ref{thm: comparison}.

	\subsection{Examples}
	\label{section. discussion of P flex}
	As indicated by Theorem~\ref{theorem. localization along spectra}, our first results do not require $P$ to be a collection of {\em prime} numbers; indeed, the geometric constructions never rely on primeness. 
	
	\begin{example}
		If $P = \{1\}$ or $P = \emptyset$, then $T^*D^n[P^{-1}]$ is equivalent in $\weincrit$ to  $T^*D^n_{std}$, the usual cotangent bundle of $D^n$ with its standard Weinstein structure. See Example \ref{example: X1 is X} for extended discussion. 
	\end{example}
	
	\begin{example}\label{example. P = 0 case}
		If $P$ contains $0$, then $T^*D^n[P^{-1}]$ is flexible in the sense of Cieliebak-Eliashberg \cite{CE12}, as observed in  \cite{Lazarev_Sylvan}. See Example \ref{example: X0 is flexible} for extended discussion. 
	\end{example}

	\begin{example}
		\label{example: two sets of primes}
		Our proof of Theorem \ref{thm: intro_idempotent_functor} shows that if $P$ and $Q$ are two finite sets of integers, then  $T^*D^n[P^{-1}] \times T^*D^n[Q^{-1}]$ is equivalent to  $(T^*D^n)[(P\cup Q)^{-1}]$ in $\weincrit$; 	see Remark \ref{remark: two sets of primes, proof}.  Furthermore,  if $m = \prod_{p_i\in P} p_i$ is a product of distinct primes, then $(T^*D^n)[m^{-1}]$ is equivalent to 
		$T^*D^n[P^{-1}]$  and therefore equivalent to 	$\prod_{p_i \in P} (T^*D^n)[p_i^{-1}]$; see Remark \ref{rem: prime_factorization}.
Therefore, our construction, which a priori can depend on a finite collection of arbitrary integers, only depends on a finite collection of primes.  We expect a careful analysis of the coherences to show that $P$-flexibilization is a functor from the symmetric monoidal category of Zariski-closed subsets of $\spec \ZZ$ (where morphisms are inclusions, and the symmetric monoidal structure is union) to $\weincrit$.
	\end{example}

	\begin{example}
		Another immediate consequence of Theorem \ref{thm: intro_comparison} is that $X[P^{-1}] \times Y$ is equivalent to $(X\times Y)[P^{-1}]$ in $\weincrit$, which generalizes to P-flexibility the fact that classical flexibility is preserved by taking products \cite{MS}. Additionally, $X[P^{-1}] \times Y[Q^{-1}]$ is equivalent to $X[Q^{-1}] \times Y[P^{-1}]$ in $\weincrit$, which is non-obvious from the usual definition of $X[P^{-1}]$ in \cite{Lazarev_Sylvan} or \cite{Abouzaid_Seidel}.
	\end{example}
	
	\begin{remark}
		Previous constructions $X[P^{-1}]$ and $X[P^{-1}]^{AS}$ explicitly used the Weinstein structure of $X$ and did not apply to Liouville $X$. 
		However, $X \times (T^*D^n[P^{-1}])$ makes sense even if $X$ is a Liouville sector, and thus is a candidate definition for $P$-flexibilization in the Liouville setting. However, because of the absence of Kunneth formulas for general Liouville sectors, we do not know if~\ref{item. Pflex localizes} holds in the Liouville setting.
		See also Remark~\ref{remark. subcritical liouvilles not monoidal}.
	\end{remark}

	\subsection{Proof outline}\label{sec: proof_outline}

In this section, we give an outline of Theorem \ref{thm: intro_comparison}, the equivalence between $X[P^{-1}]$ and $X \times (T^*D^n[P^{-1}])$ in $\weincrit$; the proof of Theorem \ref{thm: intro_idempotent_functor} uses many similar ingredients. We recall that $X[P^{-1}]$ is obtained from $X$ by removing the Lagrangian disk $D_P$ from a neighborhood $T^*D^n$ of each co-core of a Weinstein structure on $X$; in particular, $X[P^{-1}]$ depends on the Weinstein structure of $X$. Broadly speaking, to relate $X[P^{-1}]$ and $X \times (T^*D^n[P^{-1}])$, we first stabilize $X[P^{-1}]$ to $X[P^{-1}] \times T^*D^n$ and then observe that both $X[P^{-1}] \times T^*D^n$ and $X \times (T^*D^n[P^{-1}])$ differ from the same sector $(X \times T^*D^n)[P^{-1}]$ by subcritical handles, and hence $X[P^{-1}]$ and $X \times (T^*D^n[P^{-1}])$ are equivalent in $\weincrit$. 

More precisely, the first step in our proof is to show that carving out Lagrangian disks and then stabilizing is equivalent to stabilizing and then carving out Lagrangian disks, up to subcritical handles. Namely, in Proposition \ref{prop. for comparison}, we prove that for any (proper) regular Lagrangian disk $L \subset X$, 
$(X \backslash L) \times T^*D^1$ is equivalent to $X \times T^*D^1 \backslash (L \times T^*_0 D^1)$, up to subcritical handles, and this they are equivalent in $\weincrit$.  This result is false without inverting subcriticals.  For example, if $X$ is a Weinstein domain (without sectorial boundary), then $X[P^{-1}]\times T^*D^n$ has sectorial divisor  $X[P^{-1}] \times T^*S^{n-1}$ while $X\times (T^*D^n[P^{-1}])$ has divisor
		$X\times T^*S^{n-1}$. These are not symplectomorphic for general $X$ (even if we pick a model for $X[P^{-1}]$ that is diffeomorphic to $X$). 	However, adding subcritical handles can change the divisor by a \textit{loose} hypersurface and hence resolves this issue; see the discussion in Example \ref{examples: subcritical_looseness}.  
	We also note that with our definition of $T^*D^n[P^{-1}]$, $T^*D^n[P^{-1}] \times T^*D^n$ and 	$(T^*D^n[P^{-1}])\times (T^*D^n[P^{-1}])$ have different cohomology in degree $2n-1$ and hence subcritical handles are required to make them equivalent.

Using Proposition \ref{prop. for comparison}, we prove that 
$X[P^{-1}] \times T^*D^n = (X \backslash \coprod_{C_X} D_P) \times T^*D^n$ (where $C_X$ are the co-cores of $X$) is equivalent in $\weincrit$ to $X \times T^*D^n \backslash (\coprod_{C_{X \times T^*D^n}} D_P \times T^*_0 D^n)$
Similarly, we prove that $X \times (T^*D^n[P^{-1}]) = X \times (T^*D^n \backslash D_P)$ is equivalent in $\weincrit$ to $X \times T^*D^n \backslash (\coprod_{C_{X \times T^*D^n}} T^*_0 D^n \times D_P)$. 
The latter two sectors  $X \times T^*D^n \backslash (\coprod_{C_{X \times T^*D^n}} D_P \times T^*_0 D^n)$ and $X \times T^*D^n \backslash (\coprod_{C_{X \times T^*D^n}} T^*_0 D^n \times D_P)$
are obtained from $X \times T^*D^n$ by carving out either $D_P \times T^*_0 D^n$ or $T^*_0 D^n \times D_P$ from neighborhoods $T^*D^n \times T^*D^n$ of the co-cores of $X \times T^*D^n$. Hence, to prove that these two sectors are equivalent, it suffices to prove the Lagrangian disks $D_P \times T^*_0 D^n$ and $T^*_0 D^n \times D_P$  in $T^*D^n \times T^*D^n$ are Lagrangian isotopic. Our second important observation is that this is indeed the case. In Proposition \ref{prop: swap_isotopic_Id}, we prove that for Lagrangians $L, K \subset T^*D^n$, the Lagrangian $L \times K$ is Lagrangian isotopic to $K \times L$ for $n$ even, and hence for any $n$ after stabilization 

\begin{remark}[Role of h-principles]
Although subcritical Weinstein handles satisfy an h-principle, our proofs of Theorem~\ref{thm: intro_idempotent_functor} through Corollary~\ref{corollary. P inverted disks are commutative} for $P$ a collection of integers do not use any h-principles.
    
Also, our techniques in fact give a new proof of properties~\ref{item. flex homotopy invariance}, \ref{item. flex localizes}, and \ref{item. flex idempotence} for classical flexibility, in our critical setting, independent of any h-principles. We also do not use the theory of wrinkled Legendrian embeddings (on which the h-principle for flexible Weinsteins was originally based).	However, h-principles do play a role for various extensions and modifications of our results, like the generalization of $P$ to arbitrary regular Lagrangian disks (the first part of Theorem \ref{theorem. localization along spectra}) does use the h-principle for subcritical isotropics; see Remark \ref{rem: use_hprinciple}). 
\end{remark}

Another useful observation throughout our work is that a Weinstein subdomain inclusion $X_0 \xrightarrow{\subset} X_1$ induces a morphism $X_1\rightarrow X_0$ in $\weincrit$; see Proposition \ref{prop: convert_subdomain_to_proper_inclusion}. The geometric ingredients of Proposition~\ref{prop: convert_subdomain_to_proper_inclusion} are those in the third author's Viterbo sector construction \cite{sylvan_talk}. Weinstein subdomain inclusions arise quite naturally in the proofs of the main results Theorems \ref{thm: intro_comparison} and \ref{thm: intro_idempotent_functor}. For example, $X[P^{-1}] \xrightarrow{\subset} X$is naturally a Weinstein subdomain inclusion and hence induces a morphism $X \rightarrow X[P^{-1}]$ in $\weincrit$. For $X = T^*D^n$, this morphism agrees with the natural transformation used to prove that the functor $\times (T^*D^n[P^{-1}])$ is an idempotent, as stated in Theorem \ref{thm: intro_idempotent_functor}.

	\subsection{Future applications: Higher algebra}
	\label{section. higher algebra}
	
	Prior to the present work, all works regarding $P$-inversion in symplectic geometry were linear, but not fully multiplicative. For example, while there were symplectic constructions of the module $\ZZ[1/P]$, there were no symplectic constructions of its natural {\em commutative} ring structure.
	
	Moreover, writing down a commutative ring structure ``by hand'' is rarely feasible for spectra. Given this difficulty, and given the emergence of spectral sectorial invariants~\cite{nadler-shende, large-whvss}, we faced the important task of producing higher-algebraic structures formally from geometric facts.
	
	Our main results accomplish a great deal of this task, establishing all the $\infty$-categorical coherences one could hope for (Theorem~\ref{thm: intro_idempotent_functor}) without engaging with holomorphic disks. Let us explain. It is widely expected that any spectral wrapped Fukaya category of Weinstein sectors is: 
	\begin{itemize}
		\item Preserved under stabilization, subcritical handle attachments, and trivial inclusions.
		\item Symmetric monoidal with respect to direct product of sectors and strict proper inclusions. (One need only verify this for {\em sets} of inclusions, not spaces of them.)
	\end{itemize}
	As a result of our present work,  it is then formal that the spectral wrapped Fukaya category will send an $E_\infty$-algebra in the critical Weinstein category to a symmetric monoidal stable $\infty$-category. In particular, the unit of this stable $\infty$-category is an example of a commutative ring spectrum.
	
	Theorem~\ref{theorem. localization along spectra} shows that the critical Weinstein category $\weincrit$ has a plethora of commutative algebra objects. In fact, because the spectral wrapped Fukaya category of a point (equipped with standard tangential structures) must be the $\infty$-category of finite spectra:
	\begin{itemize}
		\item The process of replacing $T^*D^n$ with $T^*D^n \setminus D_K$ -- as in Theorem~\ref{theorem. localization along spectra} -- nullifies (the suspension spectrum of) $K$.
	\end{itemize}
	So we expect $(T^*D^n)_K$ to be a purely symplectic way to encode ``the universal commutative ring spectrum'' whose modules nullify $\Sigma^\infty_+K$. For example, when $K=M_p$ is the $p$-Moore space, we expect the cotangent fiber of $(T^*D^n)_K$ to have endomorphism spectrum $\SS[1/p]$, the sphere with $p$ inverted.
	
\begin{remark}\label{remark. tangential structures on Wein}
		One must specify certain tangential structures of Weinstein sectors to define a spectrally enriched wrapped category.  As in~\cite{LAST_categories}, incorporating such tangential structures preserve all arguments involving localizations and symmetric monoidal structures.
	\end{remark}	

    \begin{remark}[Why $\infty$-categories?]
	
	\label{remark. why infinity cat}
	All original arguments in this work are geometric and require no higher categories to state nor to carry out. Regardless, we view and have stated our main results as infinity-categorical consequences. 
	
	There are myriad public discussions available online~\cite{MO-HTT-moral, MO-modern-alg-top, MO-applications, MO-why-higher} regarding why infinity-categorical language and techniques have proven useful.
	As we imagine a broad audience, let us explicate the role of higher-categorical outcomes of our work. 
	\begin{enumerate}
	\item\label{item. why oocat invariants} Many invariants of Weinstein sectors and sectorial embeddings land in an $\infty$-category.
	Examples include  
	the complex giving rise to symplectic cohomology~\cite[Section 4]{ganatra_covariantly_functorial} ($\infty$-category of cochain complexes), 
	the wrapped Fukaya category ($\infty$-category of $A_\infty$-categories~\cite{tanaka-Aoo}), or
	the conjectural spectral wrapped Fukaya category ($\infty$-category of stable $\infty$-categories).
The operations we invert in this work leave these invariants unchanged, so by the universal property of ($\infty$-categorical) localizations, these invariants define a functor from $\weincrit$ (or, as in Remark~\ref{remark. tangential structures on Wein}, the version where sectors are equipped with appropriate tangential structures). Moreover, by Proposition~\ref{prop. localization inherits symmetric monoidal structure}, if the original invariants are symmetric monoidal (so we can supply data identifying invariants of a direct product to the tensor product of the invariants of the factors), then so is the induced functor out of $\weincrit$. Note that invariants of {\it Liouville} sectors are not expected to satisfy Kunneth formulas, so this is one reason to consider functors out of $\weincrit$ and $\weinstab$ (as opposed to their Liouville analogues).
	\item $\weincrit$ receives a map from $\weinstab$, and $\weinstab$ is known to encode the homotopy type of spaces of stabilized Liouville automorphisms (Remark~\ref{rem: geometric_model}). Thus, the invariants from \eqref{item. why oocat invariants} all receive actions of stabilized Liouville automorphisms. When applied to wrapped Fukaya categories, such actions give rise to analogues of the Seidel homomorphism in the sectorial setting~\cite{oh-tanaka}. Because the functor out of $\weinstab$ factors through $\weincrit$, the morphism spaces of $\weincrit$ give an upper bound on how sensitive such actions are to the homotopy types of stabilized Liouville automorphisms.
	\item The $\infty$-categorical symmetric monoidal structure on $\weincrit$ allows us to speak of $E_2,E_3,\ldots$ algebras in $\weincrit$. (Compare: In a symmetric monoidal 1-category, there is no difference between $E_n$ and $E_\infty$ algebras for $n \geq 2$.) The question of how to determine which stable $\infty$-categories -- even which chain complexes or spectra -- admit $E_n$ or $E_\infty$-algebra structures is a subtle problem. When a given invariant arises from a Weinstein sector $X$ (as its wrapped Fukaya category, say) the symmetric monoidal functor out of $\weincrit$ provides symplectic tools to produce $E_n$ or $E_\infty$ structures on such invariants by exhibiting such algebraic structures on $X$. So for example, Corollary~\ref{corollary. P inverted disks are commutative} shows that any symmetric monoidal invariant associated to $T^*D^n[P^{-1}]$ is an $E_\infty$-algebra. This gives strong evidence to the expectation that the symplectic cochains of $T^*D^n[P^{-1}]$ form an $E_\infty$-algebra in cochain complexes, or that its wrapped Fukaya category is a symmetric monoidal $A_\infty$-category, exhibiting the commutative ring structure on $\ZZ[1/P]$ -- and, over the sphere, the $E_\infty$ ring structure on $\SS[1/P]$ -- by purely geometric arguments. 
    We note that this ``strong evidence'' yields a proof if one can establish that the functors (called ``symplectic cochains'' and ``the wrapped Fukaya category'') out of the usual 1-category of Weinstein sectors are symmetric monoidal.
	\end{enumerate}

	\end{remark}

	\subsection{Conventions}\label{section. conventions}
	In this work, by an $\infty$-category, we mean a simplicial set satisfying the weak Kan condition (Definition~1.1.2.4 of~\cite{htt}). Synonyms include weak Kan complex~\cite{boardman-vogt-homotopy-invariant} and quasi-category~\cite{joyal-quasi-categories-kan-complexes-jpure}.

	There are also two conventions in the literature regarding the meaning of the term {\em localization} in the $\infty$-categorical context. For example, Definition~5.2.7.2 of~\cite{htt} demands that a localization $\cC \to \cD$ admit a fully faithful right adjoint. A more general definition, and the convention we adopt in this work, is as follows. Fixing a set of morphisms $W$ in $\cC$ (or, equivalently, a sub-$\infty$-category of $\cC$), we define a functor $\cC \to \cD$ to be a localization if it exhibits $\cD$ as the initial $\infty$-category receiving a map from $\cC$ and rendering every element of $W$ invertible. Equivalently, $\cC \to \cD$ is a localization if and only if: For any $\infty$-category $\cE$, the restriction map $\fun(\cD,\cE) \to \fun(\cC,\cE)$ is fully faithful with essential image those functors $\cC  \to \cE$ sending elements of $W$ to equivalences in $\cE$~\cite[Definition~6.3.1.9, tag 01MP]{kerodon}. Abusing terminology, we often  refer to $\cD$ as the localization of $\cC$ (with respect to $W$, or along $W$), and denote it by $\cC[W^{-1}]$. A concrete model for $\cC[W^{-1}]$ is obtained from the pushout of the diagram of simplicial sets
		\begin{equation}\nonumber
			\begin{tikzcd} 
			W \arrow{r}{}	\arrow{d}{}  
			& C  \\
			{|W|}
			\end{tikzcd}
		\end{equation}
	by taking a fibrant replacement of the pushout in the Joyal model structure. Above, $|W|$ is the Kan complex generated by the simplicial set $W$, which can also be modeled by Kan's $\text{Ex}^\infty(W)$ construction. 
		More on localization can be found in~\cite[Section 2.1]{tanaka-Aoo}, \cite[Remark 3.24]{LAST_categories}, and~\cite[\href{https://kerodon.net/tag/01ME}{Tag 01ME}]{kerodon}.
	
	\subsection*{Outline}	In Section \ref{sec: Liou_geo_background} we give some background on Liouville geometry, introduce the critical Weinstein category, and prove some helpful properties. Section \ref{sec: category} reviews the necessary material on localization functors. In Section \ref{sec: first_look_flexibilization} we describe the two P-flexibilization functors $X[P^{-1}]$ and $X \times (T^*D^n[P^{-1}])$, reviewing the construction from~\cite{Lazarev_Sylvan}.  We prove Theorem \ref{thm: intro_comparison} comparing these two functors in  Section \ref{sec: comparison_P_flexibilization} and we prove Theorem \ref{thm: intro_idempotent_functor} (that these functors are localizing) 	in Section \ref{sec: idempotency}.
	
	\subsection*{Acknowledgements}	
	The authors thank Yasha Eliashberg for helpful discussions, particularly around the movie construction (Proposition \ref{prop: Weinstein_Movie_construction}) and suggesting the use of Morse-Bott Weinstein structures. 	The first author was supported by the NSF postdoctoral fellowship, award \#1705128, and NSF grant \#2305392.     
	The first and second authors were partially supported by the Simons Foundation through grant \#385573, the Simons Collaboration on Homological Mirror Symmetry. 
	The third author was supported by a Texas State Univeristy Research Enhancement Program grant, a Sloan Research Fellowship, and by an NSF CAREER Grant under Award Number 2044557.

	\section{Liouville geometry background}\label{sec: Liou_geo_background}

	\subsection{Liouville and Weinstein sectors}

	\begin{defn}[Liouville sector]
		\label{defn:liou_sector}
		Fix an exact symplectic manifold $ ( X, \omega=d\lambda ) $ possibly with corners, together with the data, for each $ x \in \partial^i X $ in a codimension $i$ corner ($i \geq 1$), of a neighborhood $\nbhd(x)$ inside $X$ and a codimension-preserving symplectic submersion
		\eqn\label{eqn. corner projections}
		\pi_x \colon \nbhd(x) \to T^*[0, 1)^i.
		\eqnd
		We say this collection of data is a {\em Liouville sector} if it satisfies the following:
		\begin{enumerate}
			\item\label{item. finite type} $\lambda$ has finite type. This means that $X$ admits a proper, smooth function $X \to \RR_{\geq 0}$ which, outside some compact subset of $X$, is linear with respect to the Liouville flow of $X$.
			\item\label{item. splitting lambda} Each $\pi_x$ is flat, and $\lambda$ is split with respect to $\{\pi_x\}_{x \in \del X}$.
            (See Remark~2.2 of~\cite{LAST_categories}.)
			\item\label{item. projections compatible} If $y \in \partial^j X \cap \nbhd(x)$ with $j \leq i$, then on the overlap we have $\pi_y = \pi_{yx} \circ \pi_x$, where $\pi_{yx} \colon T^*[0, 1)^i \to T^*[0,1)^j$ is a projection to $j$ components (not necessarily respecting the order of coordinates).
		\end{enumerate}
		The splitting in (2) means that  $\nbhd(x)$ is a product of a fiber $F$ of $\pi_x$ and some neighborhood of $\pi_x(x)$ so that $\lambda|_{\nbhd(x)} =  \lambda^F + \pi_x^*\mathbf{p}d\mathbf{q}$. One can check that $\lambda^F$ renders $F$ as an open subset of a Liouville sector. 
		
		As in the boundaryless setting (also called a completed Liouville domain or a Liouville manifold),  the \emph{skeleton} $\skel X$ is the smallest attracting set for the negative Liouville flow.
	\end{defn}
	
	We will usually denote a Liouville sector by $(X, \lambda)$ or just $X$, leaving the family of projections $\{\pi_x\}$ implicit. 
	
	\begin{remark}
		Note that the splitting condition implies that any trajectory of the Liouville vector field $v_\lambda$ which begins away from $\partial X$ must remain away from $\partial X$. This definition of Liouville sector agrees with the notion from~\cite{ganatra_generation} of a \emph{straightened} Liouville sector with corners; see Section 12.2 of ~\cite{ganatra_generation}.
We also point out that we are naturally led to consider sectors with corners because 
many of constructions involve taking products of sectors and the product of two sectors with non-trivial boundary naturally has non-trivial corners
	\end{remark}
	
	\begin{notation}[$X_{cpt}$]
		\label{notation. domain of sector}
		We will sometimes 
		identify $X$ with a compact, codimension 0 submanifold $X_{cpt} \subset X$ for which $X \setminus X_{cpt}$ is the positive half of a symplectization of a contact manifold with (convex) boundary.
	\end{notation}
	
	\begin{definition}[Sectorial boundary]
		Let $X$ be a Liouville sector. Then the {\em sectorial boundary} of $X$ is the boundary of $X$ when considered as a smooth manifold with corners -- in other words, the union of all boundary and corner strata of $X$.
	\end{definition}

	As a consequence of Definition~\ref{defn:liou_sector}, after smoothing the corners, we have a splitting of a neighborhood of $\del X$:	
	\eqn\label{eqn. bordering}
	(H \times \mathbb{R}_x^+ \times \mathbb{R}_y,  
	\lambda_H + y dx).	
	\eqnd
	
	\begin{definition}
		\label{defn. sectorial divisor}
		We will call the identification~\eqref{eqn. bordering} of a neighborhood of $\partial X$ a \textit{bordering} and call the Liouville sector $H$ from~\eqref{eqn. bordering} the \emph{sectorial divisor} of $X$. It is a Liouville sector without boundary---{ie }the completion of a Liouville domain.
		We will use the notation 
		\eqnn
		[X, H]
		\eqnd
		to denote a sector with its sectorial divisor. 
	\end{definition}
	
	\begin{remark}
		The bordering condition~\eqref{eqn. bordering} implies that the Liouville vector field is tangent to $\partial X$ and in a neighborhood of $\partial X$, the zero locus of $v_\lambda$ is the product of $[0,1]$ with the zero locus of the Liouville vector field on $H$. 
	\end{remark}
	
	\begin{remark}
		There is another approach to dealing with a sector with corners---for example, immersing sectorial hypersurfaces to cover $\del X$~\cite{ganatra_generation}. This is similar to a common smooth-topology convention that treats the boundary of $[0,1]^2$ not as a topological circle, but as a disjoint union of four intervals. 
		
		We instead choose a smoothing of $\del X$ to associate a single $H$ (well-defined up to deformation of Liouville structure). Note that any sector with corners is equivalent (in the sense of Section~\ref{section. sectorial equivalences}, though not isomorphic) to its boundary-smoothing.	This is the same way in which $[0,1]^2$ is isotopy equivalent to $D^2$.
	\end{remark}
	
	In this paper, we will consider Liouville sectors with Morse-Bott-with-corners Weinstein structures.
	\begin{definition}
		A smooth function $f$ is \emph{Morse-Bott-with-corners} if, in a neighborhood $U$ of each critical point $p$, we can find coordinates $x_i$ centered at $p$ so that 
		\[
		f(x) = \sum_i f_i(x_i)
		\]
		for all $x \in U$, where each $f_i(x_i)$ is one of
		\begin{enumerate}
			\item $\pm x_i^2$
			\item a cutoff function which is zero for $x_i \le 0$ and has strictly positive derivative for $x_i > 0$
			\item $0$.
		\end{enumerate}
		If we additionally allow the possibility $f_i=x_i^3$ ({ie }a birth-death singularity), then we'll say $f$ is \emph{generalized Morse-Bott-with-corners}.
	\end{definition}

	\begin{notation}
		We denote the critical locus of $f$ by $\crit(f)$. 
	\end{notation}
	
	\begin{remark}
		If $f$ is Morse-Bott-with-corners, $\crit(f)$ is a smooth manifold with corners.
	\end{remark}
	
	\begin{definition}[Morse index]
		If $f$ is Morse-Bott-with-corners,
		we define the \emph{Morse index} of a connected component $C \subset \crit(f)$ to be the sum of the dimensions of the non-positive eigenspaces of the Hessian $Hf(p)$ for $p \in C$, or equivalently the number of coordinates $x_i$ above for which $f_i(x_i) \ne +x_i^2$. The index of $C$ does not depend on the choice of $p$.
	\end{definition}

	   In particular, if $C$ is a codimension 0 submanifold, then it has maximal Morse index, even if $\partial C$ is nonempty.

	\begin{definition}\label{def: Weinstein_sector}
		A Weinstein sector is a Liouville sector $(X, \lambda)$ so that
		\begin{itemize}
			\item  $v_\lambda$ is gradient-like for a Morse-Bott-with-corners function $\phi: X \rightarrow \mathbb{R}$
			\item $\phi$ can be arranged to be split with respect to the boundary projections $\pi_x$.
			In other words, a neighborhood of $\partial X$ has the form
			\eqnn
			(H \times \mathbb{R}_x^+ \times \mathbb{R}_y,  
			\lambda_H + y dx, \phi_H +	e^f\lVert y\rVert^2)	
			\eqnd			
			for a Weinstein sector $H$.
			
		\end{itemize}
		
	\end{definition}
	
	\begin{remark}
		In the case without corners, our definition is similar to the notion of a \emph{Morse-Bott*} Weinstein structure appearing in Starkston's work~\cite{Starkston_arboreal}. We will not attempt to address whether it is in fact equivalent.
		
		Another similar definition is given in~\cite{Eliashberg_revisited} and called an \textit{adjusted} Weinstein structure.
	\end{remark}
	
	\begin{definition}
		\label{defn:subcrit_component}
		\ \ 
		
		\begin{itemize}
			\item 
			A connected component $C \subset \crit(\phi)$ is called \emph{subcritical} if it either has Morse index less than $n$ or has free boundary in the sense that its boundary not entirely contained in $\partial X$.
			\item
			$C$ is called \emph{critical} if it is not subcritical.
			\item 
			A Weinstein sector is called subcritical if all components of the zero locus are subcritical. 
			
		\end{itemize}
	\end{definition}
	
	\begin{remark}
		In this paper, we further require that all critical components consist of isolated points, {ie }are already Morsified. 
		A general Morse-Bott Weinstein sector can be put into this form by a $C^0$-small Liouville homotopy.	Thanks to this assumption, any component intersecting the sectorial boundary is subcritical.
	\end{remark}
	
	\begin{figure}
		\centering
		\includegraphics[scale=0.2]{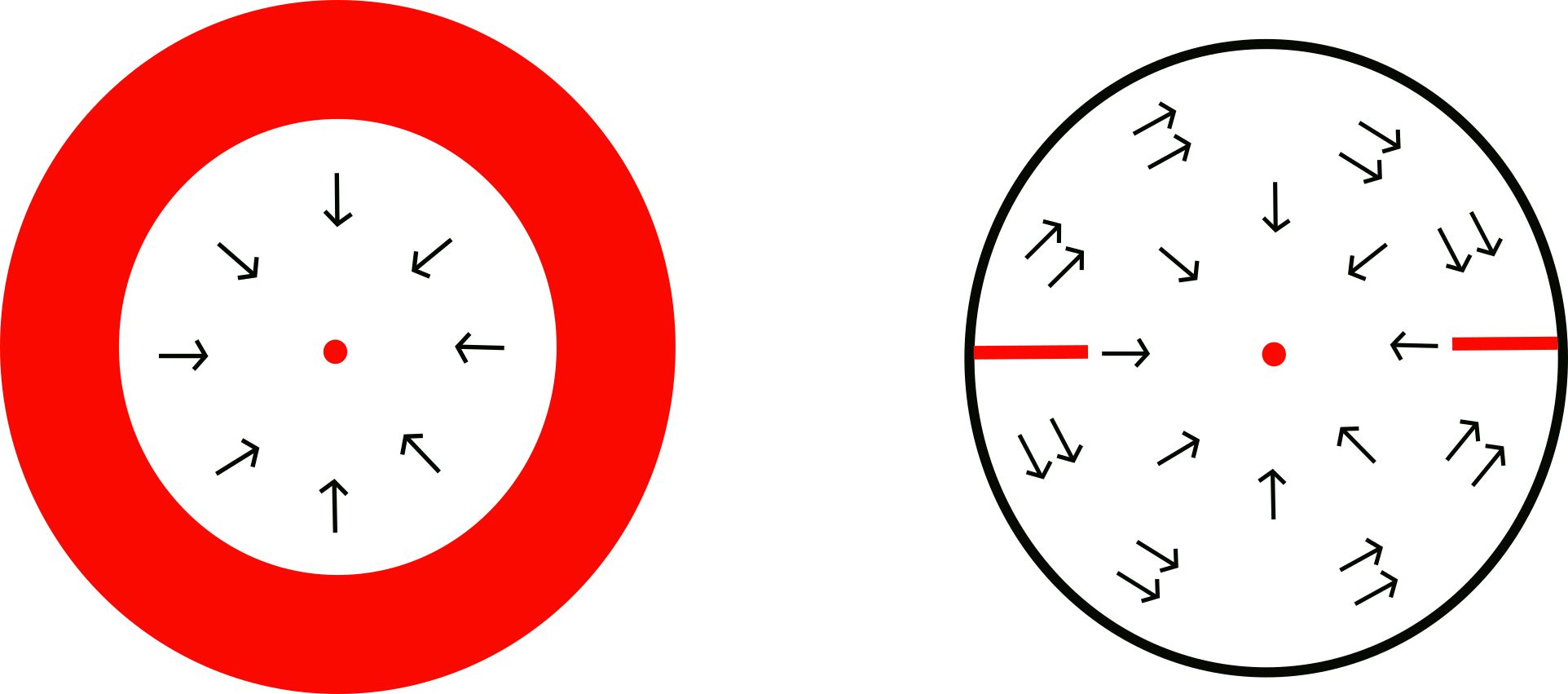}
		\caption{Two Morse-Bott Weinstein structures on $T^*D^n$, depicted via their corresponding vector fields on $D^n$; the zero locus of the vector fields is in red.
			The left figure has sectorial boundary $T^*S^{n-1}$, equipped with a Morse-Bott Weinstein structure having critical locus $S^{n-1}$. The right figure has the same sectorial boundary $T^*S^{n-1}$, but for which the boundary is equipped with a Morse Weinstein structure consisting of two points. 		
		}
		\label{fig: Weinstein_TDn}
	\end{figure}

	\begin{examples}[Cotangent bundles of disks]
		\label{ex: standard_structure_T^*D^n}
		There are two convenient Morse-Bott Weinstein structures on the symplectic manifold $T^*D^n$, both with sectorial divisor $T^*S^{n-1}$.

		The first arises from the standard Morse-Bott structure on the divisor $(T^*S^{n-1}, \lambda_{std})$, which has $S^{n-1}$ as the zero locus. We call this the {\em standard} Weinstein structure. 
		
		The second arises naturally when the divisor is given the Weinstein structure $(T^*S^{n-1}, \lambda_{Morse})$ nduced by a Morse function on $S^{n-1}$ with two isolated critical points of index $0$ and $n-1$.

		Both structures further contain an index $n$ critical point on the interior. See Figure \ref{fig: Weinstein_TDn}, where we depict the Liouville vector fields restricted to the zero-section $D^n$. Note that any vector field on $M$ has a canonical extension to a Liouville field on $T^*M$.
	\end{examples}
	
	\subsection{Stopped domains}
	\label{section. stopped domains are sectors}
	
	\begin{defn}
		
		\label{defn. stopped domain}
		Let $(X_0, \lambda_{X_0})$ and $(\Lambda, \lambda_\Lambda)$ be (compact) Liouville domains, together with a codimension 1 embedding  $i: \Lambda \hookrightarrow \partial_\infty X_0$ into the contact boundary $\partial_\infty X_0$ of $X_0$ that satisfies   
        $i^*\lambda_{X_0}= \lambda_{\Lambda}$.         
		We will call the pair 
		\eqnn
		(X_0, \Lambda)
		\eqnd
		a \textit{stopped} domain. 
		When both $X_0$ and $\Lambda$ are Weinstein, we call the pair a {\em Weinstein pair}. (We demand no compatibility between Weinstein Morse functions.)
	\end{defn}
	
	\begin{remark}
		In ~\cite{sylvan_partially_wrapped}, such a hypersurface $\Lambda$ in a contact manifold is called a \textit{stop} while in the Weinstein setting of ~\cite{Eliashberg_revisited}, it is called a \textit{Weinstein hypersurface}. 
	\end{remark}	
	
	A common maneuver in the world of Liouville geometry passes between a compact exact symplectic manifold-with-contact-boundary (where the Liouville vector field points outward along the boundary) and a complete, non-compact exact symplectic manifold-without-contact-boundary (obtained by attaching a cylinder along the Liouville vector field). There is a corresponding maneuver in the world of sectors allowing us to pass between (compact) stopped domains $(X_0,\Lambda)$ (Definition~\ref{defn. stopped domain}) and (non-compact) Liouville sectors $X$, as established in~\cite{ganatra_covariantly_functorial}.

	We review the constructions briefly.
	
	\begin{construction}[From sectors to stopped domains]
		\label{construction. sector to stopped domains}
		
		Fix a Liouville sector $X$ and let $H$ be the sectorial divisor (Definition~\ref{defn. sectorial divisor}). 
		To construct $X_0$, we consider $DT^*[-1,1] \cong T^*[-1,1]_{cpt}$ (the unit disk cotangent bundle) with the Weinstein structure induced by a vector field on $[-1,1]$ that is pointing towards $-1$ along $[-1,0)$ and vanishing along $[0,1]$. Note that the bordering~\eqref{eqn. bordering} allows us to identify $H \times T^*[0,1]$ with a neighborhood inside $X$.
		So we glue $H_{cpt} \times T^*[-1,1]_{cpt}$ to $X_{cpt}$ (Notation~\ref{notation. domain of sector}) along
		$H_{cpt} \times T^*[0,1]_{cpt}$ and call the resulting domain $X_0$. 	Note that $X$ has a proper inclusion into the completion of $X_0$.
		Next we observe that the Liouville form on $X_0$ restricts to the Liouville form on $H_{cpt} \cong H_{cpt} \times \{-1\} \subset H_{cpt} \times T^*[-1,1]_{cpt}$. Then $(X_0,H_{cpt})$ is a stopped domain.
		Note also that if $X$ is Weinstein, then so is $X_0$. We also note that the resulting Liouville stopped domain $X_0$ is not canonical since it relies on a choice of compact subset $X_0 \subset X$ whose complement is a symplectization of the contact boundary of $X_0$. However we show in Proposition \ref{prop: homotopy_sector_to_stopped} below that any two such  Liouville domains have Liouville homotopic structures.
	\end{construction}

	\begin{construction}[From stopped domains to sectors]
		\label{construction. stopped domains to sectors}
		As explained in~\cite{Eliashberg_revisited}, any stopped domain $(X_0, \Lambda)$ can be converted into an Liouville sector that we denote $\overline{(X_0, \Lambda)}$. Namely, we consider $T^*[0,1]_{cpt}$ with the Weinstein structure induced by a vector field on  $[0,1]$ that vanishes on $[0,1/4]$, pointing towards $1$ along $(1/4, 1/2)$, has an index $1$ critical point at $1/2$, and pointing towards $0$ along $(1/2, 1]$. Then we attach  $\Lambda \times T^*[0,1]_{cpt}$ to the stopped domain $(X_0, \Lambda)$ along $\Lambda \times  T^*_1[0,1]_{cpt}$. Here $T^*_1[0,1]_{cpt}$ is  the unit cotangent fiber over $1 \in [0,1]$; the gluing identifies these fibers with small integral curves of the Reeb vector field on $X$.
		See Figure \ref{fig: stop_to_sector}. We define $\overline{(X_0,\Lambda)}$ by completing. When $(X_0,\Lambda)$ is Weinstein, so is $\overline{(X_0,\Lambda)}$.
	\end{construction}
	
	\begin{remark}
		In particular, $X_0$ is a Weinstein \textit{subdomain} of $\overline{(X_0, \Lambda)}$; see Section \ref{sec: subdomain_def} for a definition.
		The Liouville vector field on $\overline{(X_0, \Lambda)}$ has zero locus that corresponds to the zero locus of $X_0$ and $([0,1/4] \cup \{1/2\}) \times C$, where $C$ is the zero locus on $\Lambda$. If $\Lambda$ has isolated critical locus, then so does the resulting   Weinstein structure on $\overline{(X_0, \Lambda)}$.
	\end{remark}
	
	\begin{remark}
		In general, if $(X_0, \Lambda)$ arises from a sector $X$ as in Construction~\ref{construction. sector to stopped domains}, then $\overline{(X_0, \Lambda)}$ is Weinstein homotopic to (a slightly larger version of) $X$; see Section \ref{sec: homotopies} below. 
	\end{remark}

	\begin{figure}
		\centering
		\includegraphics[scale=0.2]{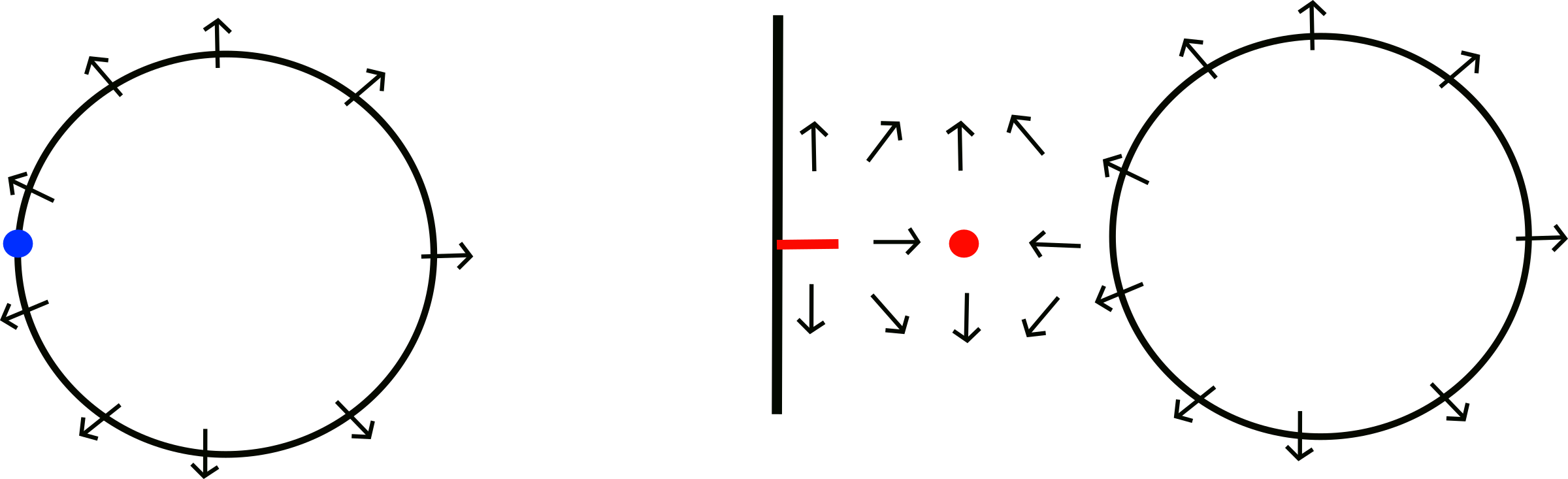}
		\caption{Converting a stopped domain, with stop in blue, to a sector, with additional critical points in red.	
		}
		\label{fig: stop_to_sector}
	\end{figure}

	\begin{examples}\label{examples: subcritical_looseness}
		Let $X$ be a subcritical Weinstein sector (Definition~\ref{defn:subcrit_component})
		with associated domain $X_0$ and stop $\Lambda$.  Since $X$ is a subcritical sector, $X_0$ is a subcritical domain. However, the stop $\Lambda$ itself need not be subcritical. For example, the sector $\Lambda \times \mathbb{C}_{Re \ge 0} = [\Lambda\times D^2, \Lambda \times \{1\}]$ is subcritical for any Weinstein domain $\Lambda$
		since the Weinstein sector $[D^2, \{1\}]$ has only critical point of index $0$, lying in its boundary. 
		A subcritical sector has no Lagrangian co-cores since it has no isolated index $n$ critical points but it does have a collection of Lagrangian `linking' disks of its sectorial divisor. As explained in Remark \ref{rem: linking_disks}, 
		these disks can be realized as co-cores of a homotopic Weinstein structure; see~\cite{ganatra_generation} for a definition of linking disks. 
		
		If $X$ is a subcritical sector, then  
        $\Lambda \subset\partial X_0$ is a loose Weinstein hypersurface, as defined in~\cite{Eliashberg_revisited}. We will prove this only in the case when $X$ is obtained by attaching subcritical handles to $\Lambda \times  (D^2, \{1\})$. We claim that $\Lambda \times\{1\}\subset\Lambda\times D^2$ is a loose Weinstein hypersurface. To see this, we proceed by induction. Let $H^n$ be a Weinstein handle of $\Lambda^{2n}$ and $C^n$ be the core of this handle. Then $C^n$ is a Legendrian disk in the boundary of $H^n \times D^2$ (a handle of $\Lambda\times D^2$) that intersects the belt sphere of $H^n \times D^2$ exactly once; so $C^n$ 
		is loose relative to its boundary by the criteria in~\cite{CE12} and adding subcritical handles preserves loose-ness.        
	\end{examples}
	
	\subsection{Products of sectors}
	
	Given two Liouville sectors $(X, \lambda_X), (Y, \lambda_Y)$, we define the product sector $X \times Y$ to be $(X \times Y, \lambda_X + \lambda_Y)$. 
	If $X$ and $Y$  further admit Weinstein  functions $\phi_X$ and $\phi_Y$, then  $\phi_X + \phi_Y$ is a Weinstein function on $X \times Y$.
	
	\begin{remark}
		Note 
		that critical points of $\phi_X+ \phi_Y$ correspond to pairs of critical points of $\phi_X$ and $\phi_Y$. Furthermore,  the unstable manifold of the critical point $p$ of $\phi_X+ \phi_Y$ corresponding to a pair of critical points of $\phi_X, \phi_Y$ is the product of the associated unstable manifolds. 
	\end{remark}
	
	\begin{example}
		If $X, Y$ are sectors associated to stopped domain domains $(X_0, H_X), (Y_0, H_Y)$, then the associated stopped Weinstein domain to $X \times Y$ is $(X_0 \times Y_0, X_0 \times H_Y \coprod_{H_X \times H_Y } H_X \times Y_0)$. 
	\end{example}
	
	\begin{definition}
    In the following, let $T^*D^k$ be the canonical Morse-Bott Weinstein structure $\lambda_{std} = \sum_{i=1}^n p_i dx_i$
 that is associated to the zero vector field on $D^k$. 
		For any integer $k \geq 1$, the sector $X \times T^*D^k$ is called a {\em stabilization} of the sector $X$. 
	\end{definition}

	\subsection{Strict proper inclusions}
	From here through Section~\ref{section. sectorial equivalences}, we discuss maps between Liouville and Weinstein sectors. Here is the most basic class:
	
	\begin{definition}
		\label{defn. strict proper inclusion}
		For Liouville sectors $(X, \lambda_X), (Y, \lambda_Y)$, a {\em strict proper inclusion} otherwise known as a {\em strict proper embedding}, is a smooth embedding $i: (X, \lambda_X) \hookrightarrow (Y, \lambda_Y)$ that is proper, and that strictly preserves the Liouville forms:  $i^*\lambda_Y  = \lambda_X$.
	\end{definition}
	
	\begin{remark}
		Note we allow the sectorial boundary of $X$ to intersect the sectorial boundary of $Y$. (Compare with Convention~3.1 of~\cite{ganatra_covariantly_functorial}.) 
		If $X$ is contained in the interior of $Y$, then  the complement  $Y \setminus i(X)$ is also a Liouville sector,  with a strict proper inclusion into $Y$; if $Y$ is further a  Weinstein sector, then so is $Y\setminus i(X)$.
	\end{remark}

	\begin{remark}
		\label{remark. cocores of X and Y}
		If $i: X \hookrightarrow Y$ is a strict proper inclusion of   Weinstein sectors, then the index $n$ Lagrangian co-cores of $X^{2n}$ are identified with a subset of the Lagrangian co-cores of $Y$. Furthermore, since the Liouville vector field on $Y$ is tangent to $\partial X$, points in $Y \setminus i(X)$ cannot flow into $i(X)$. So the Lagrangian co-cores of $Y$ are one of the co-cores of $i(X)$ (and hence identified with a cocore of $X$) or are entirely contained in $Y \setminus i(X)$.
	\end{remark}

	\begin{remark}\label{remark. strict is monoidal}
		We note that the formation of products is compatible with strict proper inclusions. That is, if $i$ and $i'$ are strict proper embeddings, so is the product $i \times i'$. 
	\end{remark}

	\subsection{Liouville deformations}\label{sec: homotopies}
	In this paper, we will also consider certain classes of non-strict Liouville embeddings, which we will call just Liouville embeddings. These embeddings allow certain deformations of the Liouville form, which we now discuss.
	
	\begin{definition}[Homotopies/deformations of Liouville structures]
		\label{defn. deformations of liouville structures}
		Let $\{\lambda_t\}_{t \in [0,1]}$ be a smooth, 1-parameter family of Liouville structures on $Y$.
		As usual, we will say that the family is {\em exact} if $\lambda_t = \lambda_0 + dh_t$ for some smooth family of smooth functions $h_t$. Throughout the paper, unless explicitly stated otherwise, it is assumed that every family is exact.
		
		Finally, we demand a tameness condition on our families at infinity: We demand there exists a {\em proper} smooth function $R: Y \times [0,1] \to \RR_{\geq 0}$ and a single compact subset $K \subset Y$ such that, for all $t \in [0,1]$, $R_t$ is $\lambda_t$-linear outside of $K$---that is, we demand that $d(R_t)( v_{\lambda_t})$ equals $R_t$ outside of K.
		
		We will call such a family---exact, and satisfying the tameness condition at infinity---a {\em deformation}, or equivalently a {\em homotopy}, of $\lambda = \lambda_0$.

		Exact deformations will further be called (in order of increasing restrictiveness):
		\begin{itemize}
			\item {\em Bordered} if for each $t$, $\lambda_t$ respects the splitting~\eqref{eqn. bordering}.	    \item {\em Interior} if $\lambda_t$ is constant ({ie }$t$-independent) near $\del Y$. 
			\item {\em Compactly supported} if there exists a compact set $K \subset Y\setminus \del Y$ for which $\supp(\lambda_t - \lambda_0) \subset K.$ 
		\end{itemize}
	\end{definition}

	\begin{remark}
		\label{remark. bordered homotopy}
		For a deformation $\lambda_t$ to be bordered means that $\lambda_t$, in a neighborhood of $\del Y$, is a direct product of deformations -- a deformation of Liouville structure of the divisor $H$, and a constant (non-)deformation of the structure on $T^*[0,1]^k$ -- with respect to the splitting in~\eqref{eqn. bordering}.
	\end{remark}
	
	\begin{remark}
		We warn the reader that we take ``compact support'' to be a condition checked on $Y \setminus \del Y$ (not on $Y$ itself). In particular, if $\lambda_t$ is a compactly supported deformation, the functions $h_t$ can be chosen to vanish near $\del Y$ and $\lambda_t$ is constant ({ie }$t$-independent) near this boundary.
		
		Put another way, we abusively use compactly supported to mean ``interior and compactly supported.''
	\end{remark}
	
	\begin{remark}(On Weinstein homotopies)
		\label{remark. homotopies could be weinstein}
		For clarity,  we say that a \emph{Weinstein homotopy} is a family $\lambda_t$ of Liouville forms admitting generalized Morse-Bott-with-corners Lyapunov functions. More generally, the reader can  consider a class of Liouville structures with Lyapunov functions whose singularities are invariant under products with other sectors, {ie }if  $(X, \lambda_t)$ is a Weinstein homotopy and $Z$ is a Weinstein sector, then  $(X, \lambda_t) \times Z$ is also a Weinstein homotopy. For any such choice, all of our results, {eg }Proposition \ref{prop: Weinstein_Movie_construction} and Theorem \ref{thm: comparison}, involve only Weinstein homotopies (assuming the regular Lagrangians $L \subset T^*D^n$  which are the input for Theorem \ref{thm: comparison} are defined using the same class of generalized Weinstein structures). 
	\end{remark}

	We have already seen that we can move between sectors and stopped domains (Constructions~\ref{construction. sector to stopped domains} and~\ref{construction. stopped domains to sectors}). The following proposition makes precise the idea that these operations are invertible up to a natural notion of equivalence; it further shows that these operations respect families of Liouville deformations/homotopies.	
	\begin{proposition}\label{prop: homotopy_sector_to_stopped}
		Let $X$ be a Liouville/Weinstein sector, and $(X_0, F)$ its associated stopped domain (Construction~\ref{construction. sector to stopped domains}). Then $X$ is interior Liouville/Weinstein homotopic to $X' := \overline{(X_0,F)}$ (Construction~\ref{construction. stopped domains to sectors}),
		and this interior homotopy can be chosen to be supported in a standard neighborhood of $\partial X$.
		Similarly, any bordered Weinstein homotopy $X_t$ \emph{between} Weinstein sectors associated to stopped domains is homotopic through interior homotopies to a homotopy \emph{through} Weinstein sectors associated to stopped domains.
	\end{proposition}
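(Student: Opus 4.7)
The plan is to realize $X$ and a candidate $X' = \overline{(X_0,F)}$ as sectors whose Liouville structures agree off a collar of $\partial X$ and differ only by a $1$-dimensional Liouville datum on that collar, then interpolate by appealing to convexity of the relevant space of collar data.

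First I would use the bordering condition~\eqref{eqn. bordering} to identify a neighborhood of $\partial X$ with a product $H \times T^*[0,1)^k$ carrying the split form $\lambda_H + \sum_i p_i\,dq_i$; I treat $k=1$ for concreteness, handling higher corners by applying the argument iteratively to each boundary stratum. Fix $0 < \delta' < 1$, define $X_0 := X \setminus \{q > \delta'\}$ (a compact Liouville sector, completed to a Liouville domain in the standard way), and let $F \subset \partial X_0$ be the copy of $H$ sitting at $\{q = \delta'\}$. Then $(X_0, F)$ is a stopped domain, and I set $X' := \overline{(X_0, F)}$ via Construction~\ref{construction. stopped domains to sectors}. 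Under the natural identification of underlying smooth manifolds with corners, $X$ and $X'$ have Liouville forms that agree on $X_0$ and differ only on the collar, where both split as $\lambda_H + \mu^{(i)}$ on the $1$-dimensional factor---$\mu^{(1)} = p\,dq$ coming from the sector structure on $X$, and $\mu^{(2)}$ coming from the vector field prescribed in Construction~\ref{construction. stopped domains to sectors}.

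Both $\mu^{(1)}$ and $\mu^{(2)}$ lie in the set $\mathcal A$ of Liouville forms on the $1$-dimensional cotangent factor that are of finite type, equal to $p\,dq$ near the sectorial end $q = 0$, and compatible with the gluing to $X_0$ in the interior. This set $\mathcal A$ is convex, so the straight-line interpolation $\mu^{(t)} := (1-t)\mu^{(1)} + t\mu^{(2)}$ defines an exact Liouville deformation on the collar; extending by $\lambda_X$ elsewhere produces an interior Liouville homotopy $\lambda_t$ from $X$ to $X'$ supported in a proper sub-neighborhood of $\partial X$ (strictly smaller than the full collar, since $\mu^{(t)} = p\,dq$ near $q = 0$). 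This handles the Liouville version of the first half of the proposition.

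For the Weinstein version, I would simultaneously deform the gradient-like potential on the collar: write $\phi_t$ as $\phi_H$ plus $\psi_t(q)$ plus a radial term in $p$ (schematically), where $\psi_0$ matches the potential for $X$ and $\psi_1$ produces the index-$1$ critical point of Construction~\ref{construction. stopped domains to sectors} at $q = 1/2$. The path $\psi_t$ can be arranged to undergo a single standard Morse-Bott birth at some intermediate $t_0 \in (0,1)$, which is an admissible deformation within the Morse-Bott-with-corners framework. The parametric statement follows by applying the same procedure pointwise along a bordered Weinstein homotopy $X_t$: the bordering data varies continuously in $t$, hence so do $(X_0^t, F^t)$ and the collar data, and convexity of $\mathcal A$ allows the interpolations in the new parameter $s$ to be chosen coherently. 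At $t = 0, 1$ the construction is trivially constant in $s$ because those endpoints already come from stopped-domain presentations, so the resulting $2$-parameter family yields the required homotopy of bordered homotopies.

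The main obstacle is managing the Morse-Bott birth of the index-$1$ critical point coherently with the Liouville interpolation, and doing so in families. This is handled by localizing the birth near $q = 1/2$, well away from the sectorial boundary at $q = 0$ where the splitting conditions are delicate; the standard local model for a Morse-Bott birth-death singularity varies smoothly with parameters and can therefore be inserted coherently into any bordered family.
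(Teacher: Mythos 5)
Your argument is essentially the paper's: both proofs deform only the one-dimensional Liouville/Weinstein datum on the interval factor of the boundary collar $F \times T^*[0,1]$, interpolating from the tangential (zero--vector-field) structure to the structure of Construction~\ref{construction. stopped domains to sectors} with its index~$1$ critical point at $q=1/2$, and both obtain the parametric statement by running this construction in families. (Two cosmetic points that do not affect the argument: with your convention that $q=0$ is the sectorial end, $X_0$ should be $X \setminus \{q < \delta'\}$, completed as in Construction~\ref{construction. sector to stopped domains} so that the Liouville field becomes outward-pointing at the new boundary rather than tangent to the slice $\{q=\delta'\}$; and since the two collar forms differ by an exact term linear in $p$, the interpolation is more directly the exact path $p\,dq + s\,dH_v$ than an appeal to convexity of a space of Liouville forms.)
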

	\begin{remark}
	     See Lemma 2.32 and the discussion in Section 2.8 of \cite{ganatra_covariantly_functorial} for a similar statement, which involves the more general notion of Liouville deformation applied to the more general notion of unstraightened Liouville sectors; our result involves interior homotopies applied to straigthened Liouville sectors.  
	\end{remark}
	
	\begin{proof}
		Consider a Weinstein sector $X$, so that in a neighborhood of the sectorial boundary the Weinstein structure agrees with $F \times T^*[0,1]$, where we take the canonical Morse-Bott Weinstein structure on $T^*[0,1]$ induced by the zero vector field on $[0,1]$. 
        (Explicitly, the Weinstein structure is induced by the Morse-Bott function $p^2$ in the standard $(q,p)$ coordinates of $T^*[0,1]$.) Then  there is an interior Weinstein homotopy to $X'$ so that in a neighborhood of the sectorial boundary the Weinstein structure agrees with $F \times T^*[0,1]$, where the Weinstein structure on $T^*[0,1]$ is induced by a vector field on $[0,1]$ that is zero on $[0,1/4]$, pointing towards $1$ on $(1/4, 1/2)$, has an index $1$ critical point at $1/2$, is pointing towards $0$ on $(1/2, 1)$. See Figure \ref{fig: sector_to_stopped_domain}. 
		We note that Weinstein structure $X'$ is induced by a stopped domain $(X_0, F)$, where $X_0 \subset X'$ is a Weinstein subdomain.  The second claim follows from a parametrized version of the proof of the first claim. 
	\end{proof}

	\begin{remark}\label{rem: linking_disks}
		For every index $n-1$ critical point of $F^{2n-2}$, there is an index $n$ critical point in $(X')^{2n}$, lying over the index $1$ critical point on $[0,1]$ and the co-cores of these critical points are called the \textit{linking disks} of the sectorial divisor $F$. 
	\end{remark}
	\begin{figure}
		\centering
		\includegraphics[scale=0.2]{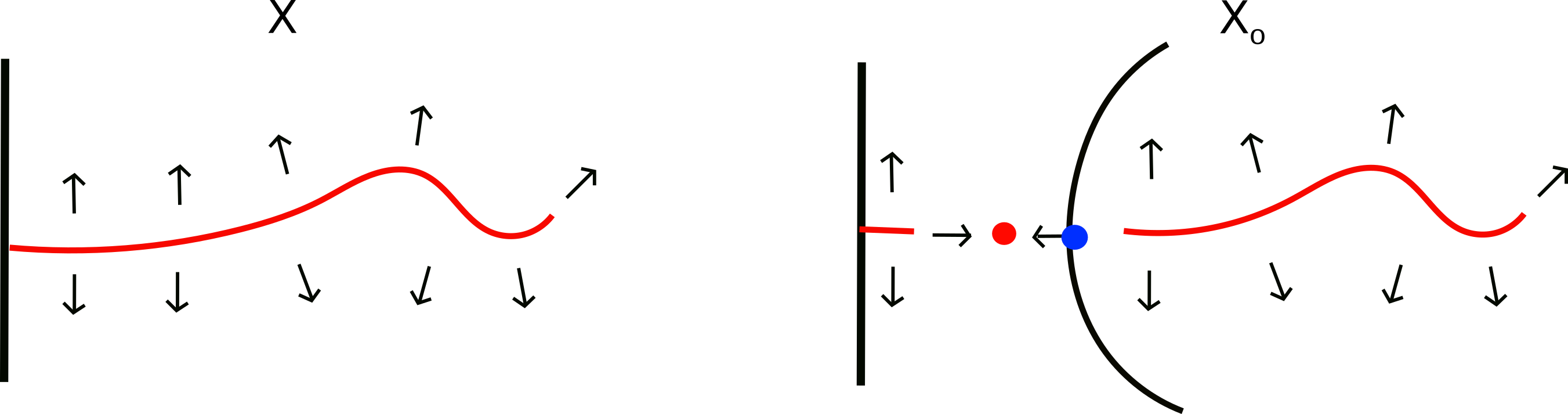}
		\caption{Homotoping an arbitrary sector to a (sector induced by a) stopped domain, with stop in blue and additional critical point in red.	
		}
		\label{fig: sector_to_stopped_domain}
	\end{figure}

	Next, we show that any bordered homotopy can be converted into an interior homotopy on a slightly larger sector which agrees with the original homotopy away from the sectorial boundary.

	\begin{proposition}\label{prop: bordered_to_interior_homotopy}
		Let  $[Y,F]$ be a sector with sectorial divisor $F$ and $[Y', F]$ be an enlargement by gluing $(F, \lambda_{F, 0})  \times (T^*[-1,1], pdq)$ to $Y$ along $F \times T^*[0,1]$. 
		\begin{enumerate}
			\item For every  (abstract) Liouville homotopy $(F, \lambda_{F, t})$ of $F$, one may choose a bordered Liouville homotopy
			$[(Y', \lambda_{Y',t}) ,(F, \lambda_{F, t})]$ of the sector $Y'$ which is constant on $Y$. 
			\item For every bordered Liouville homotopy $(Y, \lambda_{Y,t})$, one may choose an interior Liouville homotopy 
			$(Y', \lambda_{Y,t,int})$ of $Y'$ which agrees with $(Y, \lambda_{Y,t})$ on $Y$.	
		\end{enumerate}
	\end{proposition}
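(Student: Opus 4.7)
The plan is to reduce both statements to a cutoff construction using that exact Liouville homotopies are, by definition, given by adding $d$ of a time-dependent primitive. Throughout, write $\lambda_{F,t} = \lambda_{F,0} + dh_{F,t}$ for a smooth family of functions $h_{F,t} \in C^\infty(F)$ with $h_{F,0} = 0$. Parametrize the new collar $F \times T^*[-1,1]$ by coordinates $(q,p)$ on $T^*[-1,1]$ so that the new sectorial boundary of $Y'$ corresponds to $q=-1$ and the gluing region with $Y$ corresponds to $q \in [0,1]$.

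For part (1), fix a cutoff $\chi \colon [-1,1] \to [0,1]$ with $\chi \equiv 1$ in a neighborhood of $q=-1$ and $\chi \equiv 0$ on $[-1/2, 1]$. Define
\[
\lambda_{Y',t} = \begin{cases} \lambda_Y & \text{on } Y, \\ \lambda_{F,0} + p\,dq + d\!\bigl(\chi(q)\, h_{F,t}\bigr) & \text{on } F \times T^*[-1,1]. \end{cases}
\]
Since $\chi \equiv 0$ on the overlap $F \times T^*[0,1]$, the two definitions agree there and the formula glues to a smooth 1-form. The symplectic form $d\lambda_{Y',t} = d\lambda_{F,0} + dp\wedge dq$ on the new collar is $t$-independent and agrees with $d\lambda_Y$ on the overlap, so $\lambda_{Y',t}$ is Liouville, and completeness/finite-type are inherited from $\lambda_Y$ and the product structure. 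Near $q=-1$ where $\chi\equiv 1$, the form splits as $\lambda_{F,t} + p\,dq$, verifying the bordering condition with divisor $(F,\lambda_{F,t})$. By construction the homotopy is constant on $Y$.

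For part (2), run the previous construction in reverse. By borderedness, there is a collar in $Y$ on which $\lambda_{Y,t} = \lambda_{F,t} + p\,dq = \lambda_{F,0} + p\,dq + dh_{F,t}$. Pick a cutoff $\tilde\chi \colon [-1,1] \to [0,1]$ with $\tilde\chi \equiv 0$ near $q=-1$ and $\tilde\chi \equiv 1$ on $[-1/2, 1]$, and define
\[
\lambda_{Y',t,\mathrm{int}} = \begin{cases} \lambda_{Y,t} & \text{on } Y, \\ \lambda_{F,0} + p\,dq + d\!\bigl(\tilde\chi(q)\, h_{F,t}\bigr) & \text{on } F \times T^*[-1,1]. \end{cases}
\]
On the overlap $F \times T^*[0,1]$ one has $\tilde\chi \equiv 1$, so the new-collar formula reads $\lambda_{F,0} + p\,dq + dh_{F,t} = \lambda_{F,t} + p\,dq$, which equals $\lambda_{Y,t}$ there by the bordering assumption. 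Hence the two definitions glue smoothly. Near $q=-1$ where $\tilde\chi \equiv 0$, the form equals $\lambda_{F,0} + p\,dq$ and is $t$-independent, so the homotopy is interior. By construction it agrees with $\lambda_{Y,t}$ on $Y$.

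The main verifications in each case are routine (the symplectic forms differ from the initial ones by exact forms, so they remain symplectic; completeness is inherited from product structure in the collar) and the only subtlety is ensuring the cutoffs are chosen so the glued formula is smooth and the borderedness/interior conditions are satisfied in full neighborhoods of the respective boundaries, not just at single slices. This is why we require $\chi$ (respectively $\tilde\chi$) to equal its boundary value on an open set rather than only at $q=-1$ (respectively $q=0$).
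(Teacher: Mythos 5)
Your overall strategy---extend into the new collar $F\times T^*[-1,1]$ by a cutoff in the $q$-direction so that the form equals the deformed structure at one end and the undeformed one at the other---is the same in spirit as the paper's, which appends Eliashberg's ``movie construction'' of the homotopy $(F,\lambda_{F,s})$ to $Y$ along $F\times\{0\}$. But there is a genuine gap in the step where you assert that $\lambda_{F,0}+p\,dq+d\bigl(\chi(q)h_{F,t}\bigr)$ is a Liouville sector structure ``since the symplectic form is unchanged and completeness/finite-type are inherited.'' Being a primitive of a symplectic form is far from sufficient: the finite-type condition is a condition on the dynamics of the Liouville vector field, and it is exactly in the transition region $\chi'\neq 0$ that it can fail. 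At a fixed slice $q$ in that region, the $F$-component of your Liouville vector field is the Liouville field of $\lambda_{F,0}+\chi(q)\,d_Fh_{F,t}$, i.e.\ of the \emph{convex combination} $(1-\chi(q))\lambda_{F,0}+\chi(q)\lambda_{F,t}$. The space of finite-type Liouville structures with a fixed symplectic form is not convex, and the hypothesis that $\lambda_{F,0}$ and $\lambda_{F,t}$ are joined by a Liouville homotopy does not imply that the straight-line path between them stays Liouville---if it did, ``Liouville homotopic'' would collapse to ``cohomologous primitives of the same $\omega$.'' You use only the endpoints of the given homotopy and discard the path, which is precisely the data needed to make the construction work.

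The repair is small but essential, and it is what the movie construction does: instead of scaling the endpoint primitive by $\chi(q)$, reparametrize \emph{along the given path}. Writing $\lambda_{F,s}=\lambda_{F,0}+dh_{F,s}$, set the collar form to be $\lambda_{F,0}+p\,dq+d\bigl(h_{F,\sigma(q)}\bigr)$ for a monotone cutoff $\sigma\colon[-1,1]\to[0,t]$ with $\sigma\equiv t$ near $q=-1$ and $\sigma\equiv 0$ on $[-1/2,1]$ (and the flipped $\sigma$ for part (2)). Then every $q$-slice carries an honest member $\lambda_{F,\sigma(q)}$ of the given Liouville homotopy, the only correction is the term $\sigma'(q)\,\dot h_{F,\sigma(q)}\,dq$, which merely shifts the $p$-component of the Liouville vector field by a fiberwise-bounded amount, and finite-type follows. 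Your smoothness and matching checks on the overlap, and the verification of the bordered/interior conditions near $q=-1$, go through unchanged with this substitution.
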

	\begin{remark}
		Instead of attaching $F \times T^*[-1,0]$ to $Y$ to form $Y'$ and modifying $Y'$ in $F \times T^*[-1,0]$, we can 
		identify
		a neighborhood of $\partial Y$ in $Y$ with $F \times T^*[0,1)$ and apply Proposition \ref{prop: bordered_to_interior_homotopy}
		to this neighborhood (without affecting the part of $Y$ away from $\partial Y$). In particular, we can assume that $Y'$ is $Y$. In this way, we have the following diagram, which we include for readability. 
		\begin{equation}\label{eqn.bordered_to_interior}
		\begin{tikzcd} 
		& 	Def(F) \arrow{d}{Prop \ \ref{prop: bordered_to_interior_homotopy} \ (1)} & &	\\
		Interior(Y) \arrow{r} & Bordered(Y) 
		\arrow{rr}{Prop \ \ref{prop: bordered_to_interior_homotopy} \ (2)} & & Interior(Y) 
		\end{tikzcd}
		\end{equation}
		The vertical downward arrow is a section of the natural forgetful map taking a bordered deformation of $Y$ to a deformation of the sectorial divisor $F$. Though we do not prove this here, there is a natural topology we may give to all sets in the diagram (see Section~2.7 of~\cite{LAST_categories}) for which the functions of Proposition~\ref{prop: bordered_to_interior_homotopy} are continuous, and for which the horizontal arrows of~\eqref{eqn.bordered_to_interior} are homotopy equivalences.
	\end{remark}
	
	The proof of this result will use the following construction from ~\cite[Section~3.3]{Eliashberg_revisited}, which we will use repeatedly in this paper. 
	\begin{construction}[Movie construction]
		Let $\{\lambda_t\}_{t \in [0,1]}$ be a Liouville homotopy on $Y$. 
		Then we define a Liouville sector structure $\lambda_{movie}$ on the manifold $Y \times T^*[0,1]$ following~\cite[Section~3.3]{Eliashberg_revisited}. 	Namely, if $\lambda_t = \lambda_0 + dh_t$ for a function $h_t: Y \rightarrow\mathbb{R}$ (constant in $t$ near $0, 1$), 	then  $\lambda_{movie} = \pi_Y^*\lambda_0 + \pi_{T^*[0,1]}^*\lambda_{T^*[0,1]} +  dh$, where $h: Y\times [0,1] \rightarrow\mathbb{R}$ is defined by $h(t, y ) := h_t(y)$ and $\pi_Y: Y\times[0,1]\rightarrow Y$ and $\pi_{T^*[0,1]}: Y\times T^*[0,1]\rightarrow T^*[0,1]$ 	are projections. 
		See Section 2.2 of \cite{LAST_categories} for a proof that this is a sector. 
	\end{construction}

	\begin{proof}[Proof of Proposition~\ref{prop: bordered_to_interior_homotopy}.]
	For part 1,  we note that for each fixed $t$, we can consider the restricted Liouville homotopy $(F, \lambda_{s})$ for $0 \le s \le t$ between $F_{\lambda_0}$ and $F_{\lambda_t}$; we can rescale so that this homotopy is parametrized by $[0,1]$.  
    Then we can form the movie construction     ~\cite[Section~3.3]{Eliashberg_revisited} associated to this homotopy to produce a     
 Liouville sector structure on $F \times T^*[0,1]$ that we call $(F \times T^*[-1,0], \lambda_{movie, t})$. This sector structure looks like $(F, \lambda_{F, t}) \times T^*[-1, -1+\epsilon]$ and $(F, \lambda_{F, 0}) \times T^*[-\epsilon, 0]$ near its sectorial boundary. 		Then we can append  $(F \times T^*[-1, 0], \lambda_{movie, t})$ to $(Y, (F, \lambda_{\Lambda, 0}))$ along $F \times  0$. By varying $t$, we obtain the bordered deformation $(Y', \lambda_{Y,t})$ of $Y'$. 

For part 2, a bordered Liouville homotopy $(Y, \lambda_{Y,t})$ gives a Liouville homotopy $(F, \lambda_{F,t})$ of the sectorial divisor $F$ of $Y$. Then we can form the `flipped' movie construction of the homotopy $(F, \lambda_{F,t})$  to get $(F \times T^*[-1,0], \lambda_{movie, t})$, which looks like  $(F, \lambda_{F, 0}) \times T^*[-1, -1+\epsilon]$ and $(F, \lambda_{F, t}) \times T^*[-\epsilon, 0]$ near its sectorial boundary. Then we can append  $(F \times T^*[-1, 0], \lambda_{movie, t})$ to $(Y, (F, \lambda_{\Lambda, 0}))$ along $F \times  0$ to get an interior deformation $(Y', \lambda_{Y,t, int})$ of $Y'$ which agrees with $(Y, \lambda_{Y, t})$ on $Y$.  
	\end{proof}
	\begin{remark}
		Since these two constructions are appending the movie construction and the `flipped' movie construction, the concatenation of these homotopies is an interior homotopy of $Y$ which is homotopic through interior homotopies to the constant homotopy. 
	\end{remark}
	\begin{remark}
		In this paper, we will mostly use bordered or interior homotopies but not compactly supported homotopies. By the movie construction (Proposition~\ref{prop: bordered_to_interior_homotopy}), any bordered homotopy can be converted into an interior homotopy. Then by Moser's theorem, any interior homotopy can be converted into a compactly supported homotopy; however the resulting compactly supported homotopy is not very explicit and so we prefer to work with interior homotopies. 
	\end{remark} 
	
	Next to prove a Weinstein version of Proposition \ref{prop: bordered_to_interior_homotopy}, we first construct a modified Weinstein movie. 
	
	\begin{proposition}\label{prop: Weinstein_Movie_construction}(Weinstein movie construction)
		Suppose that $(X, \lambda_t = \lambda_0 + dh_t)$ is a Liouville homotopy between Weinstein structures $(X, \lambda_0, \phi_0)$,  $(X, \lambda_1, \phi_1)$ and suppose that the Lagrangian co-cores of  $(X, \lambda_1, \phi_1)$  are $C_1$. Then there is a function $F: T^*[0,1]\rightarrow\mathbb{R}$  so that $(X\times T^*[0,1], \lambda_{movie}^{Weinstein}:=\lambda_{movie}+dF)$ admits a Weinstein structure whose only Lagrangians co-cores are $C_1 \times T^*_{3/4} [0,1]$. If $(X, \lambda_t), t\in [0,1]$ is a Weinstein homotopy, then  $(X\times T^*[0,1], \lambda_{movie, t}^{Weinstein})$, 
		constructed using the restricted homotopy 
		$(X, \lambda_s), s\in [0,t]$, is also a  Weinstein homotopy on $X \times T^*[0,1]$.

	\end{proposition}

	\begin{proof}
		Suppose that $\phi_0, \phi_1$ are Lyapunov functions on $(X, \lambda_0)$ and $(X,\lambda_1)$ respectively that have linear growth rate $d\phi_i(Z_i) = \phi_i$ (outside a compact subset of $M$)
		and $\lambda_t - \lambda_0 = dh_t$. 
		We will explain how to construct the Weinstein structure  $(\lambda_\mathrm{movie}^{Weinstein}, \Phi)$ over the region where $\frac{\partial}{\partial t}\lambda_t$ is nonzero, which after reparametrizing we take to be $[\frac13,\frac23]$. There, the only requirement for the Lyapunov condition is that $d\Phi(\lambda_\mathrm{movie}) > \varepsilon > 0$. To begin, pick a family $r_t$ of symplectization coordinates for $\lambda_t$ with $r_0 = \phi_0$ and $r_1 = \phi_1$, again assuming this family is constant on $[0,1/3]\cup [2/3,1]$. 
		Let $f\colon[0,1]\to\R_{\ge0}$ be a Morse-Bott-with-corners function which is zero near $\{0,1\}$, has a unique index 1 critical point at $\frac34$, and has non-zero, constant gradient $\xi = \nabla f$ in $[\frac13, \frac23]$ (that points toward the critical point at $\frac34$). 
	We will consider the induced functions $f$ (by abuse of notation) and $\tau(\xi)$ on $T^*[0,1]$ (the former by pullback, the latter by pairing the vector field $\xi$ on $[0,1]$ with a covector $\tau$). Consider the modified movie form  $\lambda_\mathrm{movie}^{Weinstein}$ and function $\Phi$
		\begin{eqnarray*}
		\lambda_\mathrm{movie}^{Weinstein} &=& \lambda_t + \tau dt + dh + ad (\tau(\xi)) = 
		\lambda_\mathrm{movie} + ad (\tau(\xi))
		\\
		\Phi &=& r_t + \tau^2 + Af
		\end{eqnarray*}
		Then $( \lambda_\mathrm{movie}^{Weinstein}, \Phi)$ is a Weinstein pair for $a>0$ sufficiently small and $A>0$ sufficiently large depending on $a$; here $h(x, t) = h_t(x)$. So in the notation of the statement of this proposition, $F = \tau(\xi): T^*[0,1]\rightarrow\mathbb{R}$.

		To see this, we first note that the  corresponding Liouville vector field is
		\[
		Z_\mathrm{movie} = Z_t + (\tau - \dot h - a\dot\xi \tau)\partial_\tau + a\xi,
		\]
        Indeed, this Liouville vector differs from the Liouville vector field $Z_t + (\tau-\dot h)\partial_\tau $ for the usual movie construction         by the Liouville vector field on $T^*[0,1]$ associated to            
       $ad(\tau(\xi))$. We claim that the latter is $a(\xi  - \tau \dot\xi \partial_\tau)$. Indeed, 
$d\tau(\xi) = \xi d\tau + \tau \dot \xi dt$ and hence 
$$dt\wedge d\tau(
\xi  - \tau \dot\xi \partial_\tau
, \_) =\xi d\tau + \tau \dot \xi dt
$$

Next, we note that	\begin{equation}\label{eq:Weinstein_cond_movie}
		d\Phi(Z_\mathrm{movie}) = d_Mr_t(Z_t) + 2\tau(\tau - \dot h -a\dot\xi \tau) + ad_tr_t(\xi) + Aadf(\xi)
		\end{equation}
		
		First, we observe that on  $[0,  1/3]$, the Liouville homotopy is constant and so $dh =0$. So on $M \times T^*[0,1/3]$, the structure
		$( \lambda_\mathrm{movie}^{Weinstein}, \Phi)$ is the product of the Weinstein structure $(\lambda_M, \phi_0)$ on $M$ and the Morse-Bott Weinstein structure on 	$T^*[0,1/3]$ associated to the Morse-Bott vector field $\xi$ on $[0,1/3]$ as discussed in Example 11.12 of \cite{CE12}.
		Concretely, here 
		\begin{equation}\label{eq:Weinstein_cond_movie2}
		d\Phi(Z_\mathrm{movie}) = d_Mr_t(Z_t) + 2\tau^2(1  -a\dot\xi)  + Aadf(\xi)
		\end{equation}
		since $\dot h$ and $d_t r_t$ vanish. 
		The key is that for sufficiently small $a$, the middle term is a positive multiple of $\tau^2$ and so is bounded from below by a positive multiple of the norm squared of $ (\tau  - a\dot\xi \tau)\partial_\tau$. The first and last term satisfy the Lyapunov inequality using the facts that $(M, \lambda_0,\phi_0)$ is Weinstein and the fact that $\xi$ is the gradient of $f$. The analogous result holds on $M \times T^*[2/3,1]$, where the homotopy and $r_t$ is also constant  in $t$.

		Next, we consider the pair
		$( \lambda_\mathrm{movie}^{Weinstein}, \Phi)$ on $M \times T^*[1/3, 2/3]$, where $\xi$ is bounded away from zero. We first observe that the compact region of $M\times  T^*[0 , 1]$ bounded by $r_t=C$ and $\tau^2=D$ with $D$ large compared to $C$ is attracting for the negative Liouville flow and hence it suffices to prove the Lyapunov inequality in this region. 
		This region is attracting because all points in $M \times T^*[0,1]$ flow into the non-compact region $\{r_t \le C\} \subset M\times T^*[0,1]$ for some large $C$ by considering just the $Z_t$ component of $Z_{movie}$. The projection of the region $\{r_t \le C\}$ to $M$ is compact and so $\dot h$ is bounded in $\{r_t \le C\}$. Therefore, for all points with sufficiently large $\tau$-coordinate, say $D$, we have that $\tau - \dot h - a\dot\xi \tau$ is positive (assuming that $a$ is sufficiently small), and similarly for all points with sufficiently negative $\tau$-coordinate.  
		In this compact region, all the terms of \eqref{eq:Weinstein_cond_movie} are bounded, and the last term is positive 
		and bounded away from zero, since we further assume that we are in
		$M \times T^*[1/3, 2/3]$. So we can make the whole equation positive and bounded away from zero by making $A$ sufficiently large. This proves the Lyapunov inequality since $|Z_{\movie}|, |df|$ are both positive in this region.

		Since $\xi$ is non-constant in $[1/3, 2/3]$, the Weinstein Lyapunov function $\Phi$ has no critical points in $M\times T^*[1/3, 2/3]$. In $M\times T^*[0, 1/3]$ and $M\times T^*[2/3,1]$, we have the product Weinstein structure. So the only critical points of maximal index correspond to 
        $(x, 3/4) \in 
        M\times\{3/4\} \subset M\times T^*[0,1]$, where $x$ is a maximal index critical of $(M, \lambda_1, \phi_1)$. The Lagrangian co-core of this critical point is the product of co-core of $p$ and the co-core of $\{3/4\} \subset T^*[0,1]$, which is $T^*_{3/4}[0,1]$. 
		This finishes the proof of the first claim in the proposition. 
		
		For the second claim, we consider the Weinstein movie structure	$(X\times T^*[0,1], \lambda_{movie, t}^{Weinstein})$ that is 
		constructed using the restricted homotopy 
		$(X, \lambda_s), s\in [0,t]$ (and  reparametrizing $[0,t]$ to $[0,1]$); we call this the generalized Weinstein movie construction since $(X, \lambda_t)$ may be generalized Weinstein. 
		Note that for all $t$, the Liouville vector field on $(X\times T^*[1/3,2/3], \lambda_{movie, t}^{Weinstein})$ has no zeroes while the Weinstein structure 
		$(X\times T^*[0,1/3], \lambda_{movie, t}^{Weinstein})$ is just 
		$(X, \lambda_0) \times T^*[0, 1/3]$ (with the standard Weinstein structure on structure on $T^*[0,1/3]$) and  the Weinstein structure 
		$(X\times T^*[2/3,1], \lambda_{movie, t}^{Weinstein})$ is just 
		$(X, \lambda_t) \times T^*[2/3, 1]$ (with the Weinstein structure on structure on $T^*[2/3, 1]$ induced from the vector field $\xi$). Since 		$(X, \lambda_t)$ is a Weinstein homotopy, so is 	$(X, \lambda_t) \times T^*[2/3, 1]$ since by assumption Weinstein homotopies are preserved by stabilization. 
	\end{proof}
	
	Now we use this Weinstein movie construction to prove the Weinstein analogs of Proposition Proposition \ref{prop: bordered_to_interior_homotopy}. 
	
	\begin{proposition}\label{prop: bordered_to_interior_homotopy_Weinstein}
		\ \ 
Let $[Y, F]$ be a Liouville sector and let 
$[Y', F]$ be an enlargement by gluing $(F, \lambda_{F, 0})  \times (T^*[-1,1], pdq)$ to $Y$ along $F \times T^*[0,1]$ as in Proposition \ref{prop: bordered_to_interior_homotopy}.         
		\begin{enumerate}
			\item 
			If $[Y, F]$ is a Weinstein sector and $(F, \lambda_{F, t})$ is a Weinstein homotopy with co-cores $C_{F_t}$ (at times $t$ when $\lambda_{t}$ is a Weinstein structure), there is a  bordered Weinstein homotopy $[(Y', \lambda_{Y', t}), (F, \lambda_{F,t})]$ constant on $Y$ and with the Lagrangian co-cores in $F\times T^*[-1,0]$ equal to $C_{F_t} \times T^*_{-3/4} [-1,0]$. 	
			\item
			If  $[(Y, \lambda_{Y,t}), (F, \lambda_{F,t})]$ is a bordered Weinstein homotopy, then there is an interior Weinstein homotopy
			$[(Y', \lambda_{Y,t,int}), (F, \lambda_{F, 0}) ]$ whose only Lagrangian co-cores in $F  \times T^*[-1,0]$ equal to $C_{F_0} \times T^*_{-3/4}[-1,0]$. 
			\item Furthermore, if in part 2), $(Y, \lambda_{Y,t})$ is a sector associated to the stopped domain $((Y_0, \lambda_t), (F,\lambda_{F,t}))$, 			
			then $(Y', \lambda_{Y,t,int})$ is associated to a stopped domain $((Y_0, \lambda_t)', F_0)$, and  $(Y_0, \lambda_t)'$ is a slight enlargement of the domain $(Y_0, \lambda_t)$ which has the same Lagrangian co-cores as $(Y_0, \lambda_t)$.
		\end{enumerate} 
	\end{proposition}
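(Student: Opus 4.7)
The proposition refines Proposition~\ref{prop: bordered_to_interior_homotopy} by additionally tracking the Morse function and the Lagrangian co-cores; the Liouville movie and reverse-movie constructions used there still form the skeleton of the argument. The only new ingredient is a careful choice of Weinstein structure on the collar $F \times T^*[-1,0]$ being glued in. I would fix once and for all a Weinstein structure on $T^*[-1,0]$ induced by a vector field on $[-1,0]$ that has a single interior index~$1$ zero (placed at $q=-3/4$ for part~(1), at $q=-1/2$ for parts~(2) and (3)) and is outward-pointing at the endpoints. With this choice, the product $F_t \times T^*[-1,0]$ carries Lagrangian co-cores exactly $C_{F_t} \times T^*_{-3/4}[-1,0]$ or $C_{F_0} \times T^*_{-1/2}$ on any slab where the Liouville form is a genuine product with $\lambda_{F,0}$.

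\textbf{Parts (1) and (2).} For part~(1), I would apply the Weinstein movie construction of \cite{Eliashberg_revisited} to the homotopy $\{(F, \lambda_{F,s})\}_{0\le s\le t}$, compressed into a thin sub-slab of $F \times T^*[-1,0]$ adjacent to $q=-1$, so that the remainder of the collar, from roughly $q = -7/8$ to $q = 0$, is a pure product $(F, \lambda_{F,0}) \times T^*[-7/8,0]$. Since the chosen index~$1$ critical height $q = -3/4$ lies in this product region, in this region the only critical points produced are $C_{F_0} \times \{-3/4\}$, which over the full parameter $t$ trace out $C_{F_t} \times T^*_{-3/4}[-1,0]$ after identifying the movie slab with the homotopy. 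Gluing this collar to $Y$ along $F \times \{0\}$ yields the bordered Weinstein homotopy $[(Y', \lambda_{Y',t}), (F, \lambda_{F,t})]$ constant on $Y$, with the stated co-cores. Part~(2) is symmetric: I would run the reverse movie compressed into a thin sub-slab adjacent to $q=0$, so that near $q=-1$ the form matches the reference $\lambda_{F,0} + p\,dq$ and the index~$1$ critical height $q = -1/2$ sits inside the constant product region. Gluing to $(Y, \lambda_{Y,t})$ along $F \times \{0\}$ gives an interior Weinstein deformation $(Y', \lambda_{Y,t,int})$ of $Y'$ whose only co-cores in the new collar are $C_{F_0} \times T^*_{-1/2}$, independent of $t$.

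\textbf{Part (3) and main obstacle.} When $(Y, \lambda_{Y,t})$ arises from a stopped domain $((Y_0, \lambda_t), (F, \lambda_{F,t}))$ via Construction~\ref{construction. stopped domains to sectors}, the preexisting attachment collar $F \times T^*[0,1]$ and the new reverse-movie collar $F \times T^*[-1,0]$ from part~(2) concatenate into a single attachment collar $F \times T^*[-1,1]$. Absorbing the thin reverse-movie sub-slab adjacent to $q=0$ into an enlargement $(Y_0, \lambda_t)'$ of $Y_0$ leaves behind a clean stop-attachment collar for a new stopped domain $((Y_0, \lambda_t)', (F, \lambda_{F,0}))$. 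Because the enlargement only adds the movie slab, which contains no index-$n$ critical points (the only index-$n$ critical points produced by the attachment collar sit over its interior index~$1$ point at $q = -1/2$, and these are precisely the linking disks of the stop), the Lagrangian co-cores of $(Y_0, \lambda_t)'$ coincide with those of $(Y_0, \lambda_t)$. The principal obstacle throughout is this compression step: one must arrange the movie of $\lambda_{F,s}$ to live in an arbitrarily thin sub-slab while preserving both the Weinstein condition and smoothness in the parameter $t$, and compatibility with the bordering~\eqref{eqn. bordering}. This is a standard but delicate reparametrization of the movie construction in~\cite{Eliashberg_revisited}, carried out exactly as in the Liouville case but tracked on the level of the Morse function $\phi_{F,s} + \phi_{[-1,0]}$.
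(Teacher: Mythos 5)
Your parts (2) and (3) follow the paper's route: run the (flipped) movie on a collar $F\times T^*[-1,0]$, Morsify by superimposing a vector field on $[-1,0]$ with one interior index~$1$ zero placed in the slab where the structure is an honest product with $(F,\lambda_{F,0})$, and for (3) absorb/cancel the leftover subcritical locus so that the concatenated collar becomes a single stop-attachment collar for an enlargement $(Y_0,\lambda_t)'$ with unchanged co-cores. That part is fine.

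Part (1), however, has a concrete error. In the (unflipped) movie the form is $\lambda_{F,t}$ near the new sectorial boundary $q=-1$ and $\lambda_{F,0}$ near $q=0$, where the collar glues to $Y$. You compress the interpolation into a thin sub-slab adjacent to $q=-1$, so that on $[-7/8,0]$ the structure is the product with $\lambda_{F,0}$, and you place the index~$1$ zero at $q=-3/4$ inside that region. The index-$n$ critical points there then have co-cores $C_{F_0}\times T^*_{-3/4}$ for \emph{every} $t$; they do not ``trace out'' $C_{F_t}\times T^*_{-3/4}$ under any identification, since at each fixed $t$ the co-core of a critical point sitting where the form is $\lambda_{F,0}+p\,dq$ is literally $C_{F_0}\times(\text{fiber})$. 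This is the conclusion of part (2), not part (1). The fix is to compress to the other end: keep $[-1,-1/2]$ a genuine product with $(F,\lambda_{F,t})$ and push the interpolation toward $q=0$, so that the index~$1$ zero at $-3/4$ lies in the $\lambda_{F,t}$-slab and the co-cores are $C_{F_t}\times T^*_{-3/4}$, as the paper does. Separately, in both parts you never explain why the compressed interpolation sub-slab itself contributes no critical points; you defer this to ``a delicate reparametrization,'' but the actual mechanism in the paper is to add the Hamiltonian vector field $X_H$ of $H=p(v)$ over the whole collar and observe that the movie Liouville field has vanishing $p$-component while $X_H$ does not, so their sum is nonvanishing wherever $v\neq 0$. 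Without some such argument the Weinstein condition on the interpolation region is unjustified.
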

	\begin{proof}
		As in Proposition \ref{prop: bordered_to_interior_homotopy}, 
		For part 1), we first form the (generalized) Weinstein  movie $(F\times T^*[-1,0], \lambda_{F, movie, t})$ by applying Proposition \ref{prop: Weinstein_Movie_construction} to the (restricted) Weinstein homotopy $(F, \lambda_{F, s})$ between $0$ and $t$ (we use the diffeomorphism $\times -1: [0,1]\rightarrow [-1,0])$ to make the movie construction be on $F \times T^*[-1,0]$ instead of $F \times T^*[0,1]$). 
		Note that the movie $(F\times T^*[-1,0], \lambda_{F, movie, t})$ can be generalized Weinstein at particular $t$, with possibly birth-death singularities, if $(F, \lambda_{F, t})$ is generalized Weinstein at those $t$. Then we append $(F\times T^*[-1,0], \lambda_{F, movie, t})$ to $Y$ along $F$ to construct $[(Y', \lambda_{Y', t}), (F, \lambda_{F,t})]$. The co-cores are $C_{F_t} \times T^*_{-3/4} [-1,0]$ by the construction in Proposition \ref{prop: Weinstein_Movie_construction}. 
		
		The second part of the proposition is exactly the same except that now we use the flipped movie construction $\lambda_{1-t}$ and so the co-cores are
		given by the product of the co-cores of $F_0$ and the fiber $T^*_{-3/4}[-1,0]  \subset T^*[-1, 0]$.
		
		\begin{figure}
			\centering
			\includegraphics[scale=0.2]{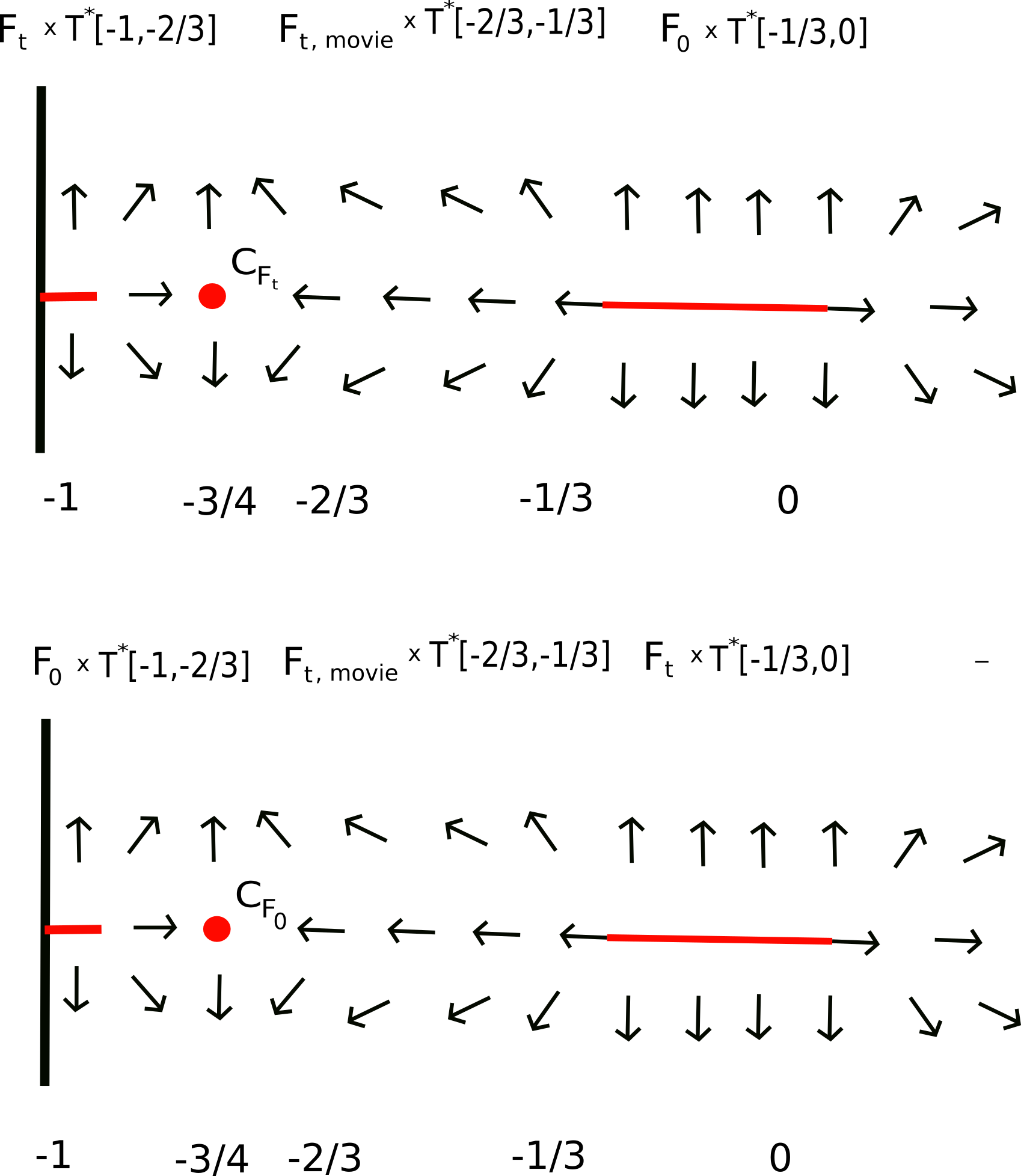}
			\caption{Bordered homotopy  at time $t$ from Part 1) (top figure)  and Part 2) (bottom figure) of Proposition \ref{prop: bordered_to_interior_homotopy_Weinstein}. For Part 1), the index $n$ co-cores are $C_{F,t} \times T^*_{-3/4} [-1,0]$, where $C_{F,t}$ are the co-cores of $(F, \lambda_{F,t})$. For Part 2), 		the index $n$ co-cores are $C_{F,0} \times T^*_{-3/4} [-1,0]$.
			}
			\label{fig: morsification_movie}
		\end{figure}

		Furthermore, if the Weinstein homotopy $(Y, \lambda_t)$ was associated to a stopped domain 
		$((Y_0, \lambda_t), F_t)$ then the original sectorial structure has the form 
		$T^*[0,1]$ for a vector field that is zero on $[0,1/4]$, has an index $1$ critical point at $1/2$, and is inward pointing near $1$. Once we attach $T^*[-1,0] \times F$ with the movie construction, this $T^*[0,1]$ is in the interior and hence we can cancel the subcritical zero locus on  $[0,1/4]$, with the index $1$ critical locus at $1/2$. The result will be a vector field on $T^*[-1,1]$ which has an index $1$ critical point at $-3/4$ and is outward pointing everywhere on $(-3/4, 1)$. So this is precisely a sector associated to a stopped domain $((Y_0, \lambda_t)', F_0)$; here $((Y_0, \lambda_t)', F_0)$ is a slight enlargement of the domain $(Y_0, \lambda_t)$ obtained by attaching $F \times T^*[-1/2, 0]$ with a Liouville vector field that has no zeroes in this region. Since the Lagrangians co-cores of $(Y_0, \lambda_t)$  are disjoint from the stop $F$, the co-cores of $(Y_0, \lambda_t)$  and $(Y_0, \lambda_t)'$ agree. See Figure \ref{fig: morsification_movie_stopped}. 
		\begin{figure}
			\centering
			\includegraphics[scale=0.21]{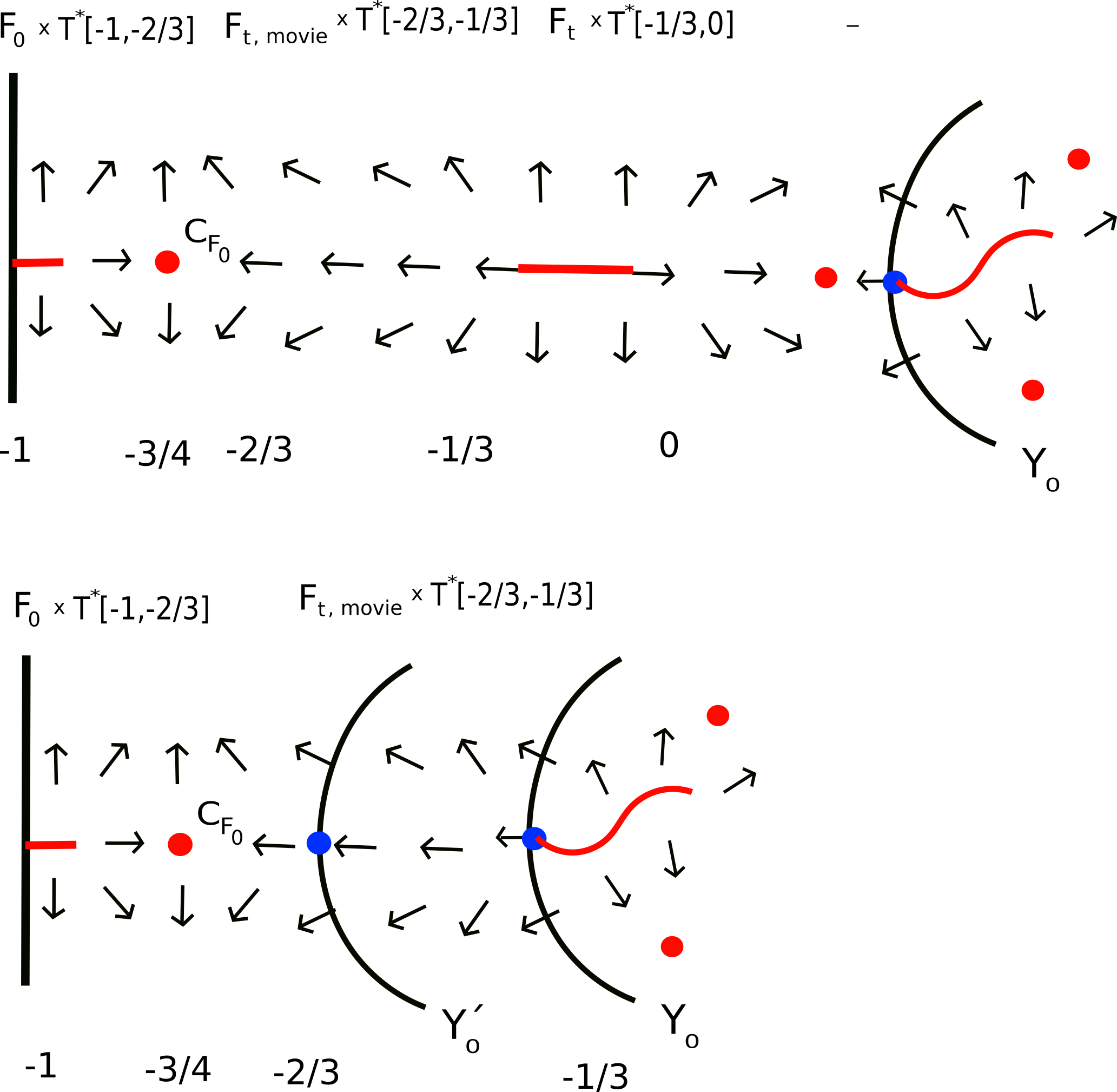}
			\caption{Bordered homotopy  at time $t$ from Part 3) of Proposition \ref{prop: bordered_to_interior_homotopy_Weinstein}, before cancellation of critical points and after cancellation. The stopped domain $Y_0$ is modified to the stopped domain $Y_0'$, which has the same interior critical points. }
			\label{fig: morsification_movie_stopped}
		\end{figure}
	\end{proof}

	\subsection{Non-strict Liouville embeddings}
	
	We will want to consider smooth proper embeddings that only respect Liouville forms after some deformation.
	
	\begin{definition}
		\label{defn. various deformation embeddings}
		Fix a (not necessarily strict) smooth, proper embedding $f: (X, \lambda_X) \rightarrow (Y, \lambda_Y)$. We say that the pair $(f, \lambda_{Y,t})$ is a \textit{bordered, interior, or compactly supported deformation embedding} if $\lambda_{Y,t}$ is a bordered, interior, or compactly supported Liouville deformation from $(Y, \lambda_Y)$ to $(Y, \lambda_Y')$ so that $f^*\lambda_Y' = \lambda_X$.
	\end{definition}
	
	We may also have occasion to require the deformation of $Y$ to be a Weinstein homotopy.

	\begin{remark}
		We note that interior and compactly supported embeddings can be composed and form the morphisms of a category; see~\cite{LAST_categories} for details. However these properties of these deformations are not invariant under products, nor under stabilization. For example, if $Y, Y'$ are  interior or compactly supported Liouville homotopic, then there is a canonical bordered homotopy of sectors between $Y \times T^*D^1, Y' \times T^*D^1$ but it is not interior or compactly supported.  
	\end{remark}
	
	The special case when $\phi$ is a diffeomorphism will appear often in this paper: 
	\begin{defn}\label{defn: isomorphism up to weinstein_homotopy}
		If $\phi: (X, \lambda_X)\rightarrow (Y, \lambda_Y)$ is a bordered, interior, or compactly supported deformation embedding which is also a diffeomorphism, then we call $\phi$ an \textit{isomorphism, up to bordered, interior, or compactly supported deformation} respectively.
	\end{defn}
	This is called a Weinstein homotopy equivalence in \cite{CE12}; we do not use this term in this paper to avoid confusion with equivalences in the categories discussed in Section \ref{sec: inf_category_stabilized_sectors}.

	\subsection{Notions of equivalence between sectors}
	\label{section. sectorial equivalences}

	There are various notions of equivalence one may define for Liouville sectors---trivial inclusions, sectorial equivalences, bordered deformation equivalences, and movie inclusions. We refer to Section 12 of~\cite{LAST_categories} for details. Here, we recall only one notion:
	
	\begin{defn}\label{defn. movie inclusions}
		Let $M \times T^*[0,1]$ be the movie construction for some bordered deformation of Liouville structures on $M$, where the deformation is constant near $t=0,1$ as usual. (In particular, there are well-defined Liouville forms $\lambda_0$ and $\lambda_1$ on $M$.) Then for any $\epsilon$ small enough, we call the inclusions
		\eqnn
		(M,\lambda_0) \times T^*[0,\epsilon] \into M \times T^*[0,1],
		\qquad
		(M,\lambda_1) \times T^*[1-\epsilon,1] \into M \times T^*[0,1]
		\eqnd
		{\em movie inclusions}. Note that both are strict proper embeddings.
	\end{defn}

	\begin{remark}
		\label{remark. equivalences dont change W}
		Movie inclusions induce equivalences of wrapped Fukaya categories; see Section~12 of~\cite{LAST_categories}.
	\end{remark}

	\subsection{\texorpdfstring{The $\infty$-category of stabilized Liouville sectors}{The infinity-category of stabilized Liouville sectors}}\label{sec: inf_category_stabilized_sectors}
	Detailed descriptions of the following appear in~\cite{LAST_categories}. 
	
	\begin{notation}[Notation 1.11 of~\cite{LAST_categories}]
		We let $\lioustr$ denote the category whose objects are Liouville sectors, and whose morphisms are strict proper embeddings (Definition~\ref{defn. strict proper inclusion}).
	\end{notation}

	The category $\lioustr$ admits a symmetric monoidal structure under direct product. (See Remark~\ref{remark. strict is monoidal}.) Accordingly, we have an endofunctor
	\eqnn
	-\times T^*[0,1] : \lioustr \to \lioustr.
	\eqnd

	\begin{notation}[Notation 1.11 and 10.1 of~\cite{LAST_categories}]
		\label{notation.lioustrdd}
		We let
		\eqnn
		\lioustr^{\dd} := \colim\left(\lioustr \xrightarrow{-\times T^*[0,1]}\lioustr \xrightarrow{-\times T^*[0,1]} \ldots\right)
		\eqnd
		denote the colimit, which one can model as an increasing union of categories. 
		
		We call $\lioustr^{\dd}$ the category of stabilized Liouville sectors.
	\end{notation}
	
	\begin{remark} Concretely, an object of $\lioustr^{\dd}$ is an equivalence class of a pair $(X,k)$ where $X$ is a Liouville sector and $k \geq 0$ is an integer. By construction, we identify $(X,k) \sim (X \times T^*[0,1]^n, k+n)$.
		
		Moreover, given two objects (represented by) $(X,k)$ and $(X',k')$, there is a morphism between them if and only if $\dim X - 2k = \dim X' - 2k'$. By definition, the set of morphisms is given by
		\eqnn
		\hom_{\lioustr^{\dd}}( (X,k),(X',k'))
		=
		\bigcup_{l \geq 0}
		\{i: X \times T^*[0,1]^{l} \to X' \times T^*[0,1]^{l+k-k'}\}
		\eqnd
		where each $i$ is required to be a strict proper embedding, and the union is taken by identifying any $i$ with $i \times \id_{T^*[0,1]}$. 
		
		Thus $\lioustr^{\dd}$ may be thought of as a disjoint union of categories indexed by the invariant $\dim X - 2k$.
	\end{remark}
	
	\begin{notation}
		\label{notation. X is X stabilized}
		Let $X$ be a Liouville sector. By abuse of notation, we will denote by $X$ the object of $\lioustr^{\dd}$ represented by the pair $(X, {\frac 1 2} \dim X)$.
	\end{notation}

	\begin{notation}
		We let $\eqs_{\movie} \subset \lioustr$ denote the collection of strict Liouville embeddings that happen to be movie inclusions (Definition~\ref{defn. movie inclusions}). We let $\eqs_{\movie}^{\dd}$ denote the collection of strict movie inclusions in $\lioustr^{\dd}$. 
	\end{notation}
	
	Now we enter the realm of $\infty$-categories. 
    See Section~\ref{section. conventions} for the notion of localization in this setting, and Remark~\ref{remark. why infinity cat} for why we take the $\infty$-categorical localization.
	\begin{notation}
		We let
		\eqnn
		\lioustab:= \lioustr^{\dd}[(\eqs_{\movie}^{\dd})^{-1}]
		\eqnd
		denote the ($\infty$-categorical) localization.
	\end{notation}
	
	\begin{remark}\label{remark. equivalent localizations}
		In fact, we show in Section~12 of~\cite{LAST_categories} that one may localize $\lioustr^{\dd}$ with respect to other natural classes of symplectic equivalences such as (i) strict trivial inclusions, (ii) strict sectorial equivalences, (iii) strict bordered deformation equivalences, and (iv) movie inclusions, and the resulting $\infty$-categories are all equivalent.
	\end{remark}

	\begin{remark}\label{rem: geometric_model}
		A priori, it is unclear what geometric information this $\infty$-categorically formal process creates. We show in~\cite{LAST_categories} that in fact, the $\infty$-category $\lioustab$ recovers, up to homotopy equivalence, the stabilized mapping {\em spaces} (not just sets) of compactly supported deformation embedding. We do not need this powerful result in the present work, but we illustrate some of the geometric utility in Proposition~\ref{prop: convert_homotopy_equivalence} below.
	\end{remark}
	
	\begin{example}[Cotangent bundles of disks and cubes]\label{example. disks and cubes}
		For every $k \geq 2$, there are two a priori different objects $T^*[0,1]^k$ (which is identified with $T^*D^0$ in $\lioustr^{\dd}$) and $T^*D^k$. As smooth manifolds, the former has corners, while the latter does not.
		
		However, there are smooth embeddings $[0,1]^k \to D^k$ and $D^k \to [0,1]^k$ whose compositions are smoothly isotopic to the identity. Results of~\cite{LAST_categories} show that, therefore, $T^*[0,1]^k$ and $T^*D^k$ are equivalent objects in $\lioustab$, even though they are not isomorphic objects in $\lioustr$.
		Notice also that the (strict) proper embeddings $T^*[0,1]^k \to T^*D^k$ are sectorial equivalences, but not trivial inclusions in the sense of~\cite[Section~2.4]{ganatra_covariantly_functorial}.
	\end{example}
	
	\begin{remark}\label{remark. lioustab symmetric monoidal}
		One of the main results of~\cite{LAST_categories} is that $\lioustab$ admits a symmetric monoidal structure whose action on objects is given by direct product of (stabilized) sectors. More precisely, given two objects $X$ and $X'$ -- {ie }pairs $(X,k)$ and $(X',k')$ with $\dim X - 2k = \dim X' - 2k' = 0$ (Notation~\ref{notation. X is X stabilized}) -- their monoidal product is given by $X \times X'$. Note in particular that the symmetric monoidal unit is the point $T^*D^0$ (and hence $T^*[0,1]^k$ for $k \geq 0$).
		
		This is not formal, and is a consequence of the fact that, after localizing with respect to sectorial equivalences, the orientation-preserving permutations of $T^*[0,1]^k$ are homotopic to the identity. The existence of such homotopies is one of the non-trivial ways in which the categorically formal process of localization detects geometrically meaningful phenomena.
	\end{remark}

	\begin{notation}[The subcategories of Weinsteins]
		Finally, we let
		\eqnn
		\weinstr \subset \lioustr,
		\qquad
		\weinstab \subset \lioustab
		\eqnd
		denote the full subcategory of those sectors that admit a Weinstein structure. 
	\end{notation}
	
	\begin{remark}
		Note that $\weinstr$ and $\weinstab$ are defined to be full subcategories -- in particular, objects are not equipped with a Weinstein structure, though they are abstractly known to admit one. We emphasize that the morphisms in these $\infty$-categories need not respect Weinstein structures in any way.
		
		The reader may well wonder why we do not stabilize $\weinstr$ and localize. This is immaterial: one can prove that 
        $\weinstr^{\dd}[(\eqs_{\movie}^\diamond \cap\weinstr^{\diamond})^{- 1}]$ is equivalent as an $\infty$-category to $\weinstab$. 
		The main reason for considering $\weinstab$ instead of 		$\weinstr^\diamond[(\eqs_{\movie}^\diamond \cap\weinstr^{\diamond})^{- 1}]$ 
		in this paper is because there is a concrete geometric model for $\weinstab$, as proven in \cite{LAST_categories}; see Remark \ref{rem: geometric_model}.
		
	\end{remark}
	
	\subsection{Converting sectorial equivalences into equivalences in the stable Liouville category}
	
	The following is a formal consequence of~\cite{LAST_categories}, but we give an explicit proof for the sake of being self-contained in our geometry. It demonstrates how geometric structures that cannot be categorically captured using only strict proper inclusions in $\lioustr$ become visible in $\lioustab$ by passing to movies. For example, \eqref{item. liouville homotopic sectors are equivalent} below is false if one replaces $\lioustab$ by $\lioustr$.

	\begin{proposition}\label{prop: convert_homotopy_equivalence}
		\begin{enumerate}
			\item\label{item. liouville homotopic sectors are equivalent}	If $(X, \lambda_{X, t})$ is a Liouville homotopy, then $(X, \lambda_{X,0})$ and $(X, \lambda_{X,1})$ are equivalent in $\lioustab$. 
			\item\label{item. inverting isos up to deformation} Furthermore, suppose there is a commutative diagram of smooth maps
			\begin{equation}\nonumber
			\begin{tikzcd} 
			X_0 \arrow{r}{f_0}	
			\arrow{d}{\phi_X} & 
			Y_0  \arrow{d}{\phi_Y}\\
			X_1  \arrow{r}{f_1}	
			& 
			Y_1 
			\end{tikzcd}
			\end{equation} 
			where $f_0, f_1$ are strict proper inclusions and 
			$\phi_X, \phi_Y$ are isomorphisms up to bordered deformation, and that the bordered deformation on $Y_1$ may be chosen to extend $f_1$ of the bordered deformation of $X_1$.  Then there is a diagram $\Delta^1 \times \Delta^1 \to \lioustab$: 
			\begin{equation}\label{eqn. induced equivalences intertwined}
			\begin{tikzcd} 
			X_0 \arrow{r}{f_0}	\arrow{d}{\overline{\phi_X}}
			& 
			Y_0 \arrow{d}{\overline{\phi_Y}} \\
			X_1  	\arrow{r}{f_1}	
			& 
			Y_1 
			\end{tikzcd}
			\end{equation}
			where $\overline{\phi_X}$ and $\overline{\phi_X}$ are equivalences ({ie }homotopy invertible morphisms) in $\lioustab$.
		\end{enumerate}
	\end{proposition}
    
	While $\lioustab$ is defined as a localization, one can -- after stabilizing $X_0,Y_0,X_1,Y_1$ -- interpret all arrows in \eqref{eqn. induced equivalences intertwined} as strict proper inclusions equipped with deformations of Liouville structures. See Remark~\ref{rem: geometric_model}.  After potentially more stabilizations, one can interpret the diagram as providing isotopies between these data.
    
	\begin{proof}
		\eqref{item. liouville homotopic sectors are equivalent}
		The movie construction  of $\lambda_{X,t}$ defines a Liouville sector $(X \times T^*[0,1], \lambda_{X, movie})$, which admits strict proper inclusions 
		$$
		i_{X,0}: (X, \lambda_{X, 0}) \times T^*[0,\epsilon] \rightarrow (X \times T^*[0,1], \lambda_{X, movie})
		$$
		and 
		$$
		i_{X,1}: (X, \lambda_{X,1}) \times T^*[1-\epsilon, 1] \rightarrow (X \times T^*[0,1], \lambda_{X, movie})
		$$
		These two strict proper inclusions are movie inclusions (Definition~\ref{defn. movie inclusions}), hence are equivalences in $\lioustab$. In $\lioustab$, $(X, \lambda_{X,0})$ and $(X, \lambda_{X,1})$ are identified with their stabilizations, so we see they are equivalent in $\lioustab$.
		
		\eqref{item. inverting isos up to deformation}
		Because $\phi_X: (X_0, \lambda_{X_0}) \rightarrow (X_1, \lambda_{X_1})$ is an isomorphism up to deformation,  there is a homotopy of forms $\lambda_{X_1, t}$ on $X_1$ from $\lambda_{X_1}: =\lambda_{X_1, 0}$  to $\lambda_{X_1, 1}$ and $\phi: (X_0, \lambda_{X_0}) \rightarrow (X_1, \lambda_{X_1,1})$ is a strict isomorphism.  As in the previous paragraph, we have strict inclusions
		$$
		(X_0, \lambda_{X_0}) \times T^*D^1 \rightarrow (X_1, \lambda_{X_1,1}) \times T^*D^1 \rightarrow  (X_1\times T^*D^1, \lambda_{X_1,movie}) \leftarrow  (X_1, \lambda_{X_1}) \times T^*D^1
		$$
		where the first map is an isomorphism in $\lioustr$ (it is in fact the diffeomorphsim $\phi_X$) and the last two morphisms are (strict) movie inclusions (given by $i_{X_1,0}$ and $i_{X_1,1}$). Because $\lioustab$ is a localization of $\lioustr^{\dd}$ along movie inclusions, this zig-zag defines an equivalence $\overline{\phi_X}: (X_0, \lambda_{X_0}) \rightarrow (X_1, \lambda_{X_1})$, well-defined up to contractible space of choices, in $\lioustab$. This also defines $\overline{\phi_Y}$. 
		
		To see that the square~\eqref{eqn. induced equivalences intertwined} can be made to commute,
		use the assumption that $\lambda_{Y_1, t}$ extends $\lambda_{X_1, t}$ to construct the strict proper inclusion $f_{movie}: (X_1 \times T^*[0,1], \lambda_{X_1, movie}) \rightarrow (Y_1 \times T^*[0,1], \lambda_{Y_1, movie})$ fitting into the following commutative diagram in $\lioustr$:
		
		\begin{tikzcd}[column sep=tiny]
		(X_0, \lambda_{X_0}) \times T^*D^1 \arrow{d}{f_0}	\arrow{r}
		& 
		(X_1, \lambda_{X_1, 1})\times T^*D^1	\arrow{d}{f_0}		\arrow{r}
		&
		(X_1 \times T^*D^1, \lambda_{X_1, movie})  \arrow{d}{f_{movie}}		
		& (X_1, \lambda_{X_1,0}) \times T^*D^1 \arrow{l} \arrow{d}{f_1}\\
		(Y_0, \lambda_{Y_0}) \times T^*D^1 \arrow{r} 
		& 
		(Y_1, \lambda_{Y_1, 1})\times T^*D^1 \arrow{r} 
		&
		(Y_1 \times T^*D^1, \lambda_{Y_1, movie}) 
		&
		(Y_1, \lambda_{Y_1, 0})\times T^*D^1			\arrow{l} 
		\end{tikzcd}
		Then we take~\eqref{eqn. induced equivalences intertwined} to be the induced diagram in $\lioustab$.
	\end{proof}

	\subsection{Subcritical morphisms}
	\begin{definition}\label{def:sub_morphism}
		A strict proper inclusion $i: X \hookrightarrow Y$ is said to {\em realize a subcritical handle removal} if, after attaching  some subcritical Weinstein handles to $Y\setminus i(X)$ to produce a new sector $Y'$, the induced inclusion $i': X\rightarrow Y'$ is a movie inclusion. Likewise, a strict proper inclusion $i: X \hookrightarrow Y$ is said to {\em realize a subcritical handle attachment} if, after removing  some subcritical Weinstein handles to $Y\setminus i(X)$ to produce a new sector $Y'$, the inclusion $i: X\rightarrow Y'$ is a movie inclusion. 
		
		More generally, we say that $i$ is {\em subcritical} if it can be written as a composition of strict proper inclusions realizing subcritical handle attachments or subcritical handle removals.
		
	\end{definition}

	In particular, a movie inclusion is a subcritical proper inclusion (obtained by removing or attaching no subcritical handles to $Y\setminus i(X)$). 
	
	The next proposition shows that subcritical morphisms are preserved under taking products. 
	
	\begin{proposition}\label{prop. subcritical is monoidal}
		Let $Y_i$ be Weinstein sectors and let $f_i: X_i \to Y_i$ be  subcritical morphisms for $i=1,2$. Then	the map $f_1 \times f_2: X_1 \times X_2 \to Y_1 \times Y_2$ is subcritical.
	\end{proposition}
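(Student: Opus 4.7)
The plan is to reduce the statement to two separate claims: (A) products of sectorial equivalences are sectorial equivalences, and (B) applying the product functor $- \times Z$ to a subcritical Weinstein handle operation on $Y$ (in the complement of $X$) produces a sequence of subcritical Weinstein handle operations on $Y \times Z$, in the complement of $X \times Z$. Assuming both claims, the argument is direct: by Definition~\ref{def:sub_morphism}, for each $i \in \{1,2\}$ there is a sector $Y_i'$ obtained from $Y_i$ by subcritical handle operations in the complement of $f_i(X_i)$ such that $f_i \colon X_i \hookrightarrow Y_i'$ is a sectorial equivalence. By (A), $f_1 \times f_2 \colon X_1 \times X_2 \hookrightarrow Y_1' \times Y_2'$ is a sectorial equivalence. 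By (B), applied first with $(Y_1, X_1, Y_2)$ and then with $(Y_2, X_2, Y_1')$, the sector $Y_1' \times Y_2'$ is obtained from $Y_1 \times Y_2$ by subcritical handle operations in the complement of $X_1 \times X_2$. Combining the two yields that $f_1 \times f_2$ is subcritical.

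For claim (A), let $i_\alpha \colon X_\alpha \to Y_\alpha$ be sectorial equivalences with bordered inverses $j_\alpha$ and bordered isotopies to the identities. Since bordered deformation embeddings are closed under products (the direct sum of two bordered Liouville deformations is again bordered), $i_1 \times i_2$ and $j_1 \times j_2$ are bordered deformation embeddings; their compositions $(i_1 j_1) \times (i_2 j_2)$ and $(j_1 i_1) \times (j_2 i_2)$ are then isotopic to the identity through bordered deformation embeddings by taking products of the witnessing isotopies. Hence $i_1 \times i_2$ is a sectorial equivalence.

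The main content is claim (B). The key observation is an index count: if $H^k$ is a subcritical Weinstein handle attached to $Y_1^{2n_1}$, so $k < n_1$, then the product Weinstein structure on $(Y_1 \cup H^k) \times Z^{2n_Z}$ has critical components given by products of those of $Y_1 \cup H^k$ and those of $Z$. The new critical components, i.e.\ those involving the critical component of $H^k$, have the form (point of index $k$) $\times$ (critical component of $Z$ of index $j \leq n_Z$), with total index $k + j < n_1 + n_Z$. Hence each is subcritical in the product sector of dimension $2(n_1 + n_Z)$. Using a (Morse-Bott) Weinstein homotopy, as in Section~\ref{sec: homotopies}, one identifies $(Y_1 \cup H^k) \times Z$ with $Y_1 \times Z$ after subcritical Weinstein handle attachments, one for each critical component of $Z$. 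Since $H^k$ was attached in the complement of $f_1(X_1)$, these new handles lie in the complement of $f_1(X_1) \times Z$, which contains $(f_1 \times \mathrm{id}_Z)(X_1 \times Z)$. Handle removal is treated identically by reversing the argument.

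The main obstacle is extracting an explicit subcritical Weinstein handle decomposition from the product Weinstein structure: although critical components of the product Weinstein function are literally products of critical components, Weinstein handles in a product sector are not \emph{a priori} products of Weinstein handles. The index count above guarantees that all new critical components have subcritical index, and the Weinstein homotopies of Section~\ref{sec: homotopies} allow one to rearrange the resulting Morse-Bott structure into a standard subcritical handle presentation supported in the complement of $X_1 \times X_2$.
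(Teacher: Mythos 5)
Your proposal is correct and follows essentially the same route as the paper: take the witnessing sectors $Y_i'$, observe that the product of the two sectorial equivalences is again a sectorial equivalence, and note that the new critical loci of $C_1\times Y_2$ and $Y_1'\times C_2$ are subcritical because an index-$k$ handle with $k<n_1$ times a critical component of index $j\le n_Z$ has index $k+j<n_1+n_Z$. Your write-up is in fact slightly more careful than the paper's (which compresses the index count into ``by dimension reasons'' and does not flag the Morse--Bott-to-handle rearrangement), but the underlying argument is identical.
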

	
	\begin{proof}
		Without loss of generality, we may assume $f_1$ realizes a subcritical handle attachment/removal, and likewise for $f_2$. So 
		there are sectors $Y_i'$ so that $f_i': X_i \rightarrow Y_i'$ is a movie inclusion and $Y_1'$ is obtained from $Y_1$ by removing/attaching some subcritical  cobordism $C_1$ in  $Y_1\setminus i(X_1)$; similarly, $Y_2'$ is obtained from $Y_2$ by removing/attaching some subcritical cobordism  $C_2$ in  $Y_2\setminus i(X_2)$. 
		Then $f_1' \times \id: X_1 \times X_2 \to Y_1' \times X_2$ and $\id \times f_2': X_1  
		\times X_2 \to X_1 \times Y_2'$  are movie inclusions. Moreover, $Y_1' \times X_2$ is obtained from $Y_1 \times X_2$ by removing/attaching some cobordism of the form $C_1 \times X_2$. Since $C_1$ is subcritical and $X_2$ is Weinstein, $C_1 \times X_2$ is subcritical for dimension reasons. This shows $f_1 \times \id$ is subcritical. Likewise, we see $\id \times f_2$ is subcritical. 
		Then $f_1 \times f_2 = (\id \times f_2) \circ (f_1 \times \id)$ is also subcritical. 			\end{proof}
	
	\begin{remark}
		\label{remark. subcritical liouvilles not monoidal}
		One may have wondered why our main results concern $\weincrit$ rather than $\lioucrit$. Proposition~\ref{prop. subcritical is monoidal}'s proof shows why. In general, the product of two subcritical morphisms of Liouville sectors need not be subcritical. 
        		
		For example, let $Z$ be McDuff's example of a Liouville manifold that is not Weinstein~\cite[Proof of Theorem 1.1]{mcduff-inventiones}. $Z$ is diffeomorphic to a product of $[0,1]$ with the unit sphere bundle of $T^*\Sigma$, where $\Sigma$ is a hyperbolic surface. In particular, $Z$ is a 4-manifold with $H_3 \cong \ZZ$, showing $Z$ is not Weinstein. (Alternatively, $\del Z$ is disconnected.) 
        Letting $\CC = Y_1$ and letting $X_1 = \emptyset$, the proper inclusion $X_1 \to Y_1$ is subcritical as it has associated Weinstein cobordism  $C = Y_1$ given by a 0-handle.
        Letting $X_2 = Y_2 = Z \times Z$, consider the identity map $f_2 = \id: X_2 \to Y_2$. The direct product of $f_1$ with $f_2$ is not a Weinstein cobordism, as the complement has homology above middle degree.

        One may of course define a natural class of morphisms among Liouville sectors that are generated by subcriticals under direct product, but we prove no notable properties of such a class of morphisms here. 	
	\end{remark}

	\subsection{\texorpdfstring{The critical $\infty$-category}{The critical infinity-category}}
	
	\begin{notation}
    \label{notation. weincritstab}
		We let $\subcrit \subset \weinstr$ denote the collection of strict proper embeddings which happen to be subcritical morphisms (Definition~\ref{def:sub_morphism}). We let $\subcrit^{\dd}$ denote the image of the collection of such morphisms in $\weinstab$. We let
		\eqnn
		\weincrit :=\weinstab[(\subcrit^{\dd})^{-1}]
		\eqnd
		denote the $\infty$-categorical localization (see Section~\ref{section. conventions}).
		We will refer to $\weincrit$ as the {\it critical $\infty$-category of stabilized Weinstein sectors} or as the {\it critical category} for short.
	\end{notation}

	\begin{remark}
		Note that it also makes sense to ask whether a morphism between Liouville sectors (not Weinstein sectors) is subcritical. Thus, one could also profitably consider
		\eqnn
		\lioucrit := \lioustab[(\subcrit^{\dd})^{-1}]
		\eqnd
		but we do not do so in this paper. See Remark~\ref{remark. subcritical liouvilles not monoidal}.
	\end{remark}

	\subsection{Subdomain embeddings and sectorial cobordisms}\label{sec: subdomain_def}
	
	It will also be convenient to consider a class of symplectic embeddings between Liouville or Weinstein sectors that are not necessarily proper inclusions.  
	
	\begin{definition}
		Let $X$ and $Y$ be Liouville sectors.
		We say that a (not necessarily proper) smooth, codimension zero  embedding $i: X \rightarrow Y$ with $i^*\lambda_Y = \lambda_X$ is a strict \textit{subdomain} embedding, or strict subdomain inclusion, if $i(\partial X) \subset \partial Y$.
	\end{definition}
	
	\begin{notation}
		We will use the notation $i: X \xrightarrow{\subset} Y$ for subdomain inclusions and the notation $i: X \hookrightarrow Y$ for strict proper inclusions.
	\end{notation}
	
	\begin{remark}
		As we will see later (Proposition~\ref{prop: convert_subdomain_to_proper_inclusion}), subdomain inclusions can be converted (contravariantly and up to homotopy) into proper inclusions in the 		 critical Weinstein category.  
	\end{remark}

	\begin{defn}
		If $i: X \xrightarrow{\subset} Y $ is a strict subdomain inclusion, then we say that $Y\setminus i(\skel X)$ is a Liouville \textit{sectorial cobordism}. We will frequently abuse notation and write $Y \setminus i(X)$ to mean $Y \setminus i(\skel X)$.
	\end{defn}

	We can also define an abstract Liouville sectorial cobordism to be an exact 
	symplectic manifold satisfying all conditions of a Liouville sector except that there are  two `boundaries at infinity', namely $\partial_{\pm \infty} C$, and the Liouville vector field is outward, inward pointing at $\partial_{+\infty} C, \partial_{-\infty} C$ respectively. 
	
	\begin{defn}We say that $i: X \xrightarrow{\subset} Y$ is a (subcritical) Weinstein subdomain inclusion if the sectorial cobordism $Y\setminus i(X)$ admits a (subcritical) Weinstein structure. 
	\end{defn}
	
	\begin{remark}
		\label{remark. subcritical inclusions and subcritical subdomains}
		Note that a subcritical proper inclusion $i: X \rightarrow Y$ as in Definition 
		\ref{def:sub_morphism} implies that there is a subcritical subdomain inclusion $Y \xrightarrow{\subset} Y'$ or $Y'\xrightarrow{\subset} Y$.	
	\end{remark}
	
	\subsection{Converting subdomain inclusions into morphisms in the critical category}
	
	In this section, we explain how to convert sectorial subdomain inclusions into morphisms in $\weincrit$, {ie }strict inclusions up to stabilization and subcritical inclusions. 
	
	\begin{proposition}\label{prop: convert_subdomain_to_proper_inclusion}
		Suppose there is a commutative diagram of symplectic embeddings
		\begin{equation}\label{comm: subdomain_inclusions}
		\begin{tikzcd} 
		X_0 \arrow{r}{f_0}	
		\arrow{d}{\phi_X} & 
		Y_0  \arrow{d}{\phi_Y}\\
		X_1  \arrow{r}{f_1}	
		& 
		Y_1 
		\end{tikzcd}
		\end{equation} 
		where $f_0, f_1$ are strict proper inclusions and $\phi_X, \phi_Y$ are strict Weinstein subdomain inclusions; furthermore, assume that this is a pullback diagram of sets, {ie }$X_1 \cap Y_0 = X_0$. Then 	there is a homotopy commutative diagram in $\weincrit$: 
		\begin{equation}\label{comm: subdomain_inclusions_converted}
		\begin{tikzcd} 
		X_0 \arrow{r}{f_0}	
		& 
		Y_0  \\
		X_1  	\arrow{u}{\phi_X^*}\arrow{r}{f_1}	
		& 
		Y_1 \arrow{u}{\phi_Y^*}
		\end{tikzcd}
		\end{equation} 
		Furthermore, if $\phi_X, \phi_Y$ respectively are subcritical subdomain inclusions, then $\phi_X^*, \phi_Y^*$ are isomorphisms in $\weincrit$ respectively. 		
	\end{proposition}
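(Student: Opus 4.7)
The plan is to first construct the morphism $\phi^*: X_1 \to X_0$ in $\weincrit$ from a strict subdomain inclusion $\phi: X_0 \xrightarrow{\subset} X_1$, and then assemble the individual constructions into the commutative square using the pullback hypothesis.

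For the construction of $\phi^*$, I would represent it as a zig-zag in $\weinstr^{\dd}$:
\begin{equation*}
X_1 \;\hookleftarrow\; \widetilde{X} \;\hookrightarrow\; X_0
\end{equation*}
in which the leftward map is a subcritical strict proper inclusion (hence invertible in $\weincrit$) and the rightward map is an honest strict proper inclusion. The auxiliary sector $\widetilde{X}$ would be built from the Weinstein sectorial cobordism $W := X_1 \setminus \phi(X_0)$ as follows. Using Proposition~\ref{prop: homotopy_sector_to_stopped}, first realize $X_1$, up to interior Weinstein homotopy, as a sector associated to a stopped domain, and introduce the interior hypersurface $\partial_\infty X_0 \subset X_1$ as an additional stop. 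Then stabilize by a factor of $T^*D^1$ and attach suitable subcritical handles in the new factor so that the resulting stopped sector is a subcritical subsector of $X_1 \times T^*D^1$, while simultaneously embedding properly into a stabilization of $X_0$ by separating off $W$ along the added stop. The desired morphism $\phi^*$ is then the composite in $\weincrit$ obtained by inverting the left arrow; geometrically this is a sectorial incarnation of Viterbo restriction.

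With $\phi_X^*$ and $\phi_Y^*$ in hand, commutativity of the resulting square in $\weincrit$ follows by arranging the two zig-zags to be compatible. Here the pullback hypothesis $X_1 \cap Y_0 = X_0$ is essential: it guarantees that $f_1$ restricts to a strict proper inclusion of cobordisms $W_X \hookrightarrow W_Y$ and agrees with $f_0$ on the $X_0$-part. This allows one to choose $\widetilde{X} \hookrightarrow \widetilde{Y}$ compatibly, producing a strictly commutative diagram in $\weinstr^{\dd}$ whose image in $\weincrit$ (after inverting the leftward subcritical arrows) is the desired homotopy-commutative square. For the final claim, if $\phi_X$ is subcritical then $W_X$ is a subcritical cobordism, so the right arrow $\widetilde{X} \hookrightarrow X_0$ is also a subcritical proper inclusion; both arrows of the zig-zag are then subcritical morphisms, making $\phi_X^*$ an equivalence in $\weincrit$, and the same argument applies to $\phi_Y^*$.

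The main obstacle I anticipate is the careful construction of $\widetilde{X}$ and the verification that the leftward arrow can always be arranged to be subcritical after stabilization, regardless of the critical handles present in $W$. The subtle point is that the subcritical status of this arrow must be preserved when passing from the individual construction of $\phi^*$ to the compatible joint construction for the square, which requires matching up the subcritical handles and additional stops used to build $\widetilde{X}$ and $\widetilde{Y}$ along $f_1 \times \id$. The interplay between stops, stabilization, and the movie / ``flipped movie'' constructions developed in Propositions~\ref{prop: bordered_to_interior_homotopy} and~\ref{prop: bordered_to_interior_homotopy_Weinstein} should provide the main technical tools for pushing this through.
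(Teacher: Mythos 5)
Your overall strategy---an auxiliary sector built by stabilizing and manipulating stops, a zig-zag with one arrow inverted because it is subcritical, the pullback hypothesis to make the two zig-zags compatible, and the observation that a subcritical cobordism makes both arrows subcritical---is indeed the paper's strategy. But your zig-zag points the wrong way, and this is not cosmetic: it is exactly where the argument lives. The paper's auxiliary object is the Viterbo sector $V(X_0,X_1)$, obtained from $X_1\times T^*D^1$ by removing $(X_1\setminus X_0)\times\{0,1\}$ from the sectorial divisor $X_1\times\{0,1\}$; it \emph{receives} strict proper inclusions from both sides, i.e.\ the zig-zag is the span $X_1\times T^*D^1\hookrightarrow V(X_0,X_1)\hookleftarrow X_0\times T^*D^1$, and it is the arrow \emph{from} $X_0\times T^*D^1$ that gets inverted. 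That arrow is subcritical for an \emph{arbitrary} cobordism $W=X_1\setminus X_0$ for a specific geometric reason: because the Liouville field is made outward-pointing along $(X_1\setminus X_0)\times\{0,1\}$, the handles of $W$ are attached to $X_0\times T^*D^1$ with their index unchanged (hence $\le n$), while the critical index of the $(2n+2)$-dimensional $V(X_0,X_1)$ is $n+1$.

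Your co-span $X_1\hookleftarrow\widetilde X\hookrightarrow X_0$ instead requires the arrow into (a stabilization of) $X_1$ to be subcritical for \emph{every} subdomain inclusion; this is precisely the step you flag as the ``main obstacle'' without resolving it, and I do not believe it can be resolved in that orientation. If $W$ has critical handles with cocores $C$, then $X_1\times T^*D^1$ has critical handles with cocores $C\times T^*_0 D^1$: stabilization raises the handle index and the critical index by one each, so these remain critical. Any candidate $\widetilde X$ that ``separates off $W$'' so as to land properly in a stabilization of $X_0$ must have these handles in its complement; equivalently, the stop you would add to cut them off is a Weinstein hypersurface containing the critical handles of $W$, and adding a stop with top-index handles produces critical linking disks, so the resulting proper inclusion is not subcritical. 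The index gain that saves the paper's construction comes from attaching the $W$-handles in the \emph{divisor direction} of the stabilization (so their index does not go up), not from stabilization alone, and that mechanism is only available in the span orientation. The rest of your outline---compatibility via the pullback hypothesis, and the subcritical case, where the paper attaches cancelling handle pairs of index $i,i+1\le n$ to show that $i_{X_1,V}$ is also subcritical---does track the paper once the zig-zag is flipped.
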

	\begin{proof}
		We first discuss the case of a single subdomain inclusion $\phi_X: X_0 \xrightarrow{\subset} X_1$. We construct an intermediate sector, which we call the \textit{Viterbo} sector, that admits strict proper inclusions from the stabilization of both $X_0$ and $X_1$; this sector was introduced by the second author~\cite{sylvan_talk}. 			
		The stabilization of $X_1$ is $X_1 \times T^*D^1 = [X_1 \times T^*D^1, X_1\times 0 \coprod X_1\times 1]$. Then we define the Viterbo sector $V(X_0, X_1)$ by removing $(X_1\setminus X_0) \times\{1\}$ from the 
		sectorial divisor $X_1\times \{0,1\}$ of $X_1 \times T^*D^1$ by making the Liouville vector field point outward along $(X_1 \setminus X_0) \times \{1\}$. 		 
		Since $V(X_0, X_1)$ is obtained by stop removal from $X_1\times T^*D^1$, there is a proper inclusion 
		$$
		i_{X_1, V}: X_1 \times T^*D^1 \hookrightarrow V(X_0, X_1).
		$$
		The sectorial divisor of $V(X_0, X_1)$ looks like $X_0\times \{0,1\}$ as a set and using the condition that $\phi_X(\partial X_0) \subset\partial X_1$ (since $\phi_X: X_0 \overset{\subset}{\rightarrow} X_1$ is a subdomain inclusion), we get a  proper inclusion
		$$
		i_{X_0, V}: X_0 \times T^*D^1\hookrightarrow V(X_0, X_1)
		$$
		We note that if $S = \phi_X(\partial X_0) \setminus \partial X_1$ is non-empty, then we would only get a proper inclusion  $(X_0\setminus S)\times T^*D^1 \hookrightarrow  V(X_0, X_1)$, where $(X_0 \setminus S)$ is the result of stop removal of $S$ from $X_0$. 
		After homotopy, the proper inclusion $i_{X_0, V}$ is always a subcritical morphism. Namely, the sector $V(X_0, X_1)$ is obtained from $X_0 \times T^*D^1$ by attaching handles, one for each handle of $X_1 \setminus X_0$, and these handles have the same index in $V(X_0, X_1)$ as in $X_1 \setminus X_0$ (which has lower dimension than  $V(X_0, X_1)$) since the Liouville vector field is outward pointing along $(X_1 \setminus X_0) \times \{0,1\}$; in particular, these handles are subcritical. 
		Hence we have a  zig-zag of strict proper inclusions  
		$$
		 X_1 \times T^*D^1 \overset{i_{X_1, V} }{\hookrightarrow} V(X_0, X_1) \overset{i_{X_0, V}}{\hookleftarrow}  X_0 \times T^*D^1 
		$$
		with the second map $i_{X_0, V}$ is a subcritical morphism, which defines the morphism $\phi_X^{*}: X_1 \rightarrow X_0$ in the critical category $\weincrit$.

		Next, suppose have a commutative diagram of symplectic embeddings as in the second part of the proposition. Then we have a commutative diagram in $\weinstr$ of the form
		\begin{equation}\label{comm: Viterbo_sector_comm_diagram}
		\begin{tikzcd} 
		X_1 \times T^*D^1 \arrow{rr}{f_1 \times Id_{T^*D^1}}	\arrow{d}{i_{X_1, V}}
		&& 
		Y_1 \times T^*D^1  \arrow{d}{i_{Y_1, V}} \\
		V(X_0, X_1)	\arrow{rr}{\overline{f_1\times Id_{T^*D^1}}}	
		&& 
		V(Y_0, Y_1)\\
		X_0 \times T^*D^1\arrow{rr}{f_0\times Id_{T^*D^1}}	\arrow{u}{i_{X_0, V} }
		&&
		Y_0 \times T^*D^1\arrow{u}{i_{Y_0, V} }
		\end{tikzcd}
		\end{equation} 
		The middle horizontal map from $V(X_0, X_1)$ to $V(Y_0, Y_1)$ is constructed as follows. By the commutative diagram \eqref{comm: subdomain_inclusions} and the assumption that $X_1\cap Y_0 = X_0$, we have 
		$f_1\ (X_1\setminus X_0)$ is a subset of  $(Y_1\setminus Y_0)$. Since  $V(X_0, X_1), V(Y_0, Y_1)$ are obtained from $X_1 \times T^*D^1, Y_1\times T^*D^1$ by making the Liouville vector fields outward pointing on  $(X_1\setminus X_0) \times \{0,1\}, (Y_1\setminus Y_0) \times \{0,1\}$ respectively, we get an induced map $\overline{f_1\times Id_{T^*D^1}}$ from $V(X_0, X_1)$ to $V(Y_0, Y_1)$ making the diagram commute. 	The bottom vertical maps in equation \eqref{comm: Viterbo_sector_comm_diagram} are subcritical morphisms and hence this induces the desired commutative diagram in $\weincrit$.

		Finally, we note that if $X_0 \subset X_1$ is a \textit{subcritical} subdomain inclusion, then the first map $i_{X_1, V}$ is also an equivalence in $\weincrit$.  In general, we can construct a sector $V(X_0, X_1)'$ by attaching handles to $V(X_0, X_1)$ in the complement of the image of $i_{X_1, V}$ so that $i_{X_1, V}: X_1 \times T^*D^1 \rightarrow V(X_0, X_1)'$ is a strict movie inclusion, after an interior homotopy of  $V(X_0, X_1)'$.
		More precisely, for each handle of $X_1 \setminus X_0$ of index $i$, we attach a Weinstein cobordism with a pair of cancelling handles of index $i, i+1$ (with the index $i$ handle in the sectorial boundary of $ V(X_0, X_1)'$). Therefore, if all handles of $X_1\setminus X_0$ are subcritical, {ie }$i < \dim X$,   
        then $V(X_0, X_1)'$ is obtained by attaching subcritical handles to $V(X_0, X_1)$, since $i+1 < \dim X +1 = \dim(V(X_0, X_1))$. 
        See Figure \ref{fig: subcritical_viterbo}.
	\end{proof}
	\begin{figure}
		\centering
		\includegraphics[scale=0.2]{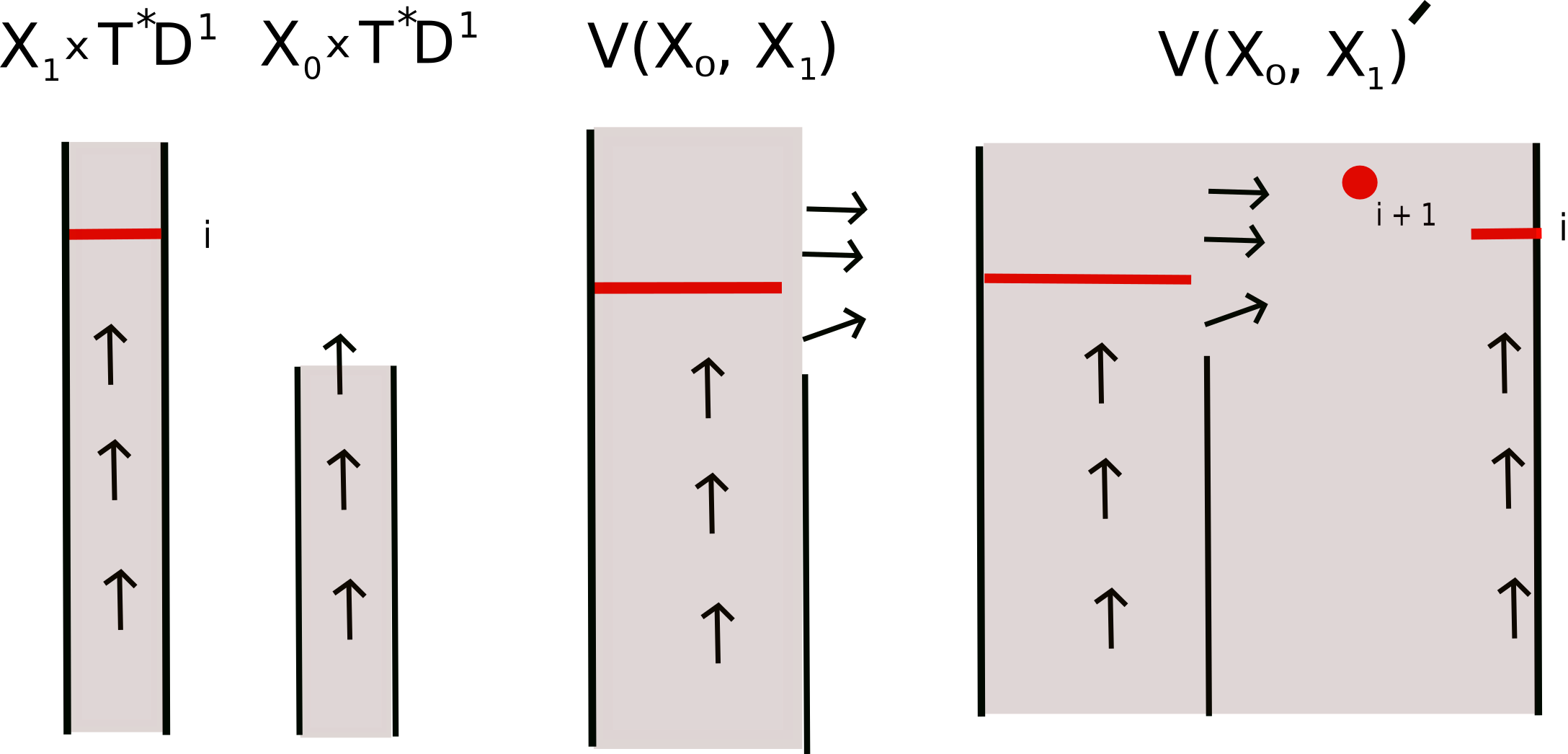}
		\caption{Sectors $X_1\times T^*D^1$, $V(X_0, X_1)$, and $V(X_0, X_1)'$, with critical points in red.		
		}
		\label{fig: subcritical_viterbo}
	\end{figure}

	\subsection{Lagrangians}
	In this paper, we consider properly embedded Lagrangians $L^n$ in $(X^{2n}, \lambda)$ that are exact, {ie }$\lambda|_L$ is an exact 1-form.  From the perspective of stopped domains, the properness condition means exactly that the Lagrangian boundary of $L$ avoids the stop. 
	We will also equip $L$ with the data of a bordered Liouville homotopy $\lambda_{L, t}$ (Definition~\ref{defn. deformations of liouville structures} and Remark~\ref{remark. bordered homotopy}) from $\lambda$ to $\lambda_L$ for which $L$ is \textit{strictly} exact, {ie }$\lambda_L|_L = 0$. Then by Proposition \ref{prop: bordered_to_interior_homotopy}, we can also assume that this bordered homotopy is an interior homotopy. We will keep track of $(L, \lambda_{L, t})$ as a tuple.
	If $L$ is already strictly exact for $\lambda$, then we say that a homotopy $\lambda_t$ of $\lambda$ is \textit{relative to} $L$ if $L$ is strictly exact for all $\lambda_t$. 
	
	\begin{example}
		For example, if $L$ has connected Legendrian boundary for $\lambda$, then there is a compactly supported function $h: X \rightarrow \mathbb{R}$ so that $\lambda+ dh|_L = 0$ and $\lambda+ dh$ is (canonically) compactly supported homotopic to $\lambda$. 
	\end{example}
	
	\begin{example}
		The issue is that the condition of having Legendrian boundary is not invariant under taking products.  If $L \subset X, K \subset Y$ are exact Lagrangians, then $L \times K \subset X \times Y$ is another exact Lagrangian in the product. If $\lambda_X, \lambda_Y$ do not vanish everywhere on $L, K$, then $L \times K$ does not have Legendrian boundary with respect to the product Liouville form $\lambda_X + \lambda_Y$.
		However, $L \times K$ is strictly exact for $\lambda_{X, L} + \lambda_{Y, K}$, where $\lambda_{X, L}, \lambda_{Y, K}$ are the forms on $X, Y$ for which $L, K$ are strictly exact. Then $\lambda_{X, L} + \lambda_{Y, K}$  is bordered Liouville homotopic to $\lambda_X + \lambda_Y$.  In particular, there is an interior  Liouville homotopy of $\lambda_X + \lambda_Y$ to a new form $\lambda_{X \times Y, L\times K}$ for which $L \times K$ is strictly exact.
		Note that there cannot be a \textit{compactly supported} deformation of $\lambda_X + \lambda_Y$ to a form for which $L \times K$ is strictly exact since this would imply that $L \times K$ has Legendrian boundary for $\lambda_X + \lambda_Y$. 
	\end{example}
	
	\begin{defn}\label{defn. regular disk}
		If $(X, \lambda)$ is Weinstein, a Lagrangian $L \subset X$ is called {\em regular} if the Liouville deformation $\lambda_{L, t}$ from $\lambda$ to $\lambda_L$ is a bordered Weinstein homotopy.  
	\end{defn}
	
	Lemma 2.2 of~\cite{EGL} proves that one can apply a further Weinstein homotopy (with only birth-death singularities) supported near $L$ so that a neighborhood $N$ of $L$ for which  $(N, \lambda_L)$ can be identified with $(T^*L, \lambda_{T^*L, std})$ equipped with its canonical Weinstein structure (induced by any proper Morse function on $L$ or 
	the zero function). By further applying Proposition \ref{prop: bordered_to_interior_homotopy}, we can also assume that there is an \textit{interior} Weinstein homotopy from $\lambda$ to such $\lambda_L$. 
	
	\section{\texorpdfstring{$\infty$-categorical background}{Infinity-categorical background}}\label{sec: category}
	
	Let $A$ be a Weinstein sector of real dimension $2n$, and suppose that there is a sectorial embedding $u: T^*\RR^n \to A$ so that
	\eqnn
	\id_A \times u: A \times T^*\RR^n \to A\times A
	\eqnd
	is, after localizing $\weinstab$ along some collection of morphisms $S$, an equivalence. (For example, $\id_A \times u$ could itself be in $S$.) The goal of this section is to prove that -- so long as $S$ is closed under direct product of sectors -- the existence of $u$ implies that
	\enum
	\item the functor $\weinstab[S^{-1}] \to \weinstab[S^{-1}]$ given by $-\times A$ is a localization onto its image, and
	\item $A$ inherits a natural $E_\infty$-algebra structure  (Remark~\ref{remark. Eoo algebra}) in $\weinstab[S^{-1}]$ with unit $u$.
	\enumd
	See Corollary~\ref{corollary. idempotent objects are algebras}.
	We will apply this result when $A = (T^*D)[P^{-1}]$ and $S$ is the class of subcritical morphisms to obtain Theorem~\ref{thm: intro_idempotent_functor}.
	
	\begin{remark}
		There are geometric consequences of the $\infty$-categorical results here. For example, by applying Theorem~\ref{thm: intro_idempotent_functor}, we can conclude that the diagram of sectorial embeddings
		\eqnn
		\begin{tikzcd} 
		(T^*D)[P^{-1}] \times (T^*D)[P^{-1}] \arrow{d}{\text{swap}}
		&& (T^*D)[P^{-1}] \times T^*D^n  \arrow[ll] \arrow[dll]\\
		(T^*D)[P^{-1}] \times (T^*D)[P^{-1}]
		\end{tikzcd} 
		\eqnd
		commutes up to homotopy in $\weincrit$. (Further, all maps are equivalences in $\weincrit$.)
		See also Remark~\ref{remark. why infinity cat}.
	\end{remark}
	
	The statements in the rest of this section are purely $\infty$-categorical. All of them are already contained in, or are immediate consequences of, the extensive machinery constructed in~\cite{higher-algebra,htt}.
	We have included this section to save the reader the trouble of navigating these tomes, and to give a streamlined presentation for the applications we have in mind. Further references include~\cite{htt, higher-algebra, joyal-notes, nadler-tanaka,LAST_categories}.    
	\subsection{Idempotent functors and localizations}
    \label{section. idempotent localization}
	We review the passage between idempotent endofunctors and localizations. More specifically, let us declare an {\em idempotent structure} on an endofunctor $L: \cC \to \cC$ to be a natural transformation $\eta: \id_\cC \to L$ for which the induced maps $\eta_{L(X)}, L(\eta(X)): L(X) \to L \circ L(X)$ are both equivalences.
	
	\begin{proposition}\label{prop. idempotents are localizations}
		Let $L: \cC \to \cC$ be a functor. The following are equivalent:
		\enum
		\item $L$ admits an idempotent structure.
		\item\label{item. localization less general condition} Think of $L$ as a functor from $\cC$ to the full subcategory $L\cC \subset \cC$ spanned by the image of $L$. Then $L$ is a left adjoint to the (fully faithful) inclusion $L\cC \to \cC$.
		\enumd
	\end{proposition}
	
	\begin{proof}
		This follows from Proposition~5.2.7.4 of~\cite{htt}. 
	\end{proof}
	
	\begin{remark}
		Recall that the localization along some class of morphisms $S$ is the initial $\infty$-category inverting all morphisms in $S$ (see Section~\ref{section. conventions}).  Proposition~\ref{prop. localizations are localizations} below (which is an immediate consequence of Proposition~5.2.7.12 of~\cite{htt}) shows that any functor satisfying~\eqref{item. localization less general condition} of Proposition~\ref{prop. idempotents are localizations} is a localization.
	\end{remark}
	
	\begin{example}[Smashing localizations]\label{example. smashing localization}
		Let $\cC^{\tensor}$ be a symmetric monoidal $\infty$-category and fix $A \in \ob \cC$. Let $1_\cC$ be the monoidal unit, and fix a map $u: 1_\cC \to A$. This defines a natural transformation $\eta: \id_{\cC} \to A \tensor - $, and for any object $X \in \cC$, we have induced maps
		\eqnn
		\eta_{A \tensor X}: A \tensor X \simeq  1_\cC \tensor (A \tensor X) \xrightarrow{u \tensor \id_{A \tensor X}} A \tensor A \tensor X,
		\eqnd
		\eqnn
		A \tensor \eta_X: A \tensor X \simeq A \tensor 1_\cC \tensor X \xrightarrow{\id_A \tensor u \tensor \id_X } A \tensor A \tensor X.
		\eqnd
		Of course, by permuting the two $A$ factors (using the symmetric monoidal structure) either both of these maps are equivalences, or neither is. So a sufficient condition for $u$ to induce an idempotent structure on the endofunctor $X \mapsto A \tensor X$ is for 
		\eqnn
		A \simeq 1 \tensor A \xrightarrow{u \tensor \id_A} A \tensor A
		\eqnd
		to be an equivalence.
	\end{example}
	
	\begin{example}\label{example. idempotents}
		Here are three examples; they are all the ``same'' example in spirit.
		
		\enum
		\item Let $\cC^{\tensor}$ be the category (in the classical sense) of ($\ZZ$-linear) rings, with the usual symmetric monoidal structure of tensor product (over $\ZZ$). Letting $A = \ZZ[1/p]$, we see that the unit map $u: \ZZ \to \ZZ[1/p]$ -- otherwise known as the inclusion of the integers -- satisfies the property that $u \tensor \id_A$ is an isomorphism from $A$ to $A \tensor A$. 
		\item More generally, let $\cC^{\tensor}$ be the opposite category (in the classical sense) of schemes over some base scheme $S$, equipped with the symmetric monoidal structure of fiber product over $S$. Let $A$ be any open subscheme of $S$, and let $u$ be (the opposite of) the inclusion of $A$ into $S$ -- {eg }a degree one open immersion. Then $u \times_S \id_A$ is an isomorphism from $A$ to $A \times_S A$.
		\item Likewise, let $\cC^{\tensor}$ be the category (in the classical sense) of topological spaces equipped with a continuous map to a fixed space $X$, equipped with symmetric monoidal structure given by fiber product over $X$. Then any open subset $U \subset X$, equipped with the inclusion $u: U \to X$, has the property that $u \times_X \id_U: U \to U \times_X U$ is a homeomorphism.
		\enumd
	\end{example}
	
	\begin{proposition}\label{prop. localizations are localizations}
		Let $L: \cC \to \cC$ be a functor satisfying either of the conditions of Proposition~\ref{prop. idempotents are localizations}. 
		Let $S \subset \cC$ be the class of morphisms that are sent to equivalences under $L$. Then the natural map $\cC[S^{-1}] \to L\cC$ from the localization along $S$ is an equivalence.
	\end{proposition}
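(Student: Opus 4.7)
The plan is to leverage Proposition~\ref{prop. idempotents are localizations} to realize $L\cC \subset \cC$ as a reflective subcategory, and then invoke the standard correspondence between reflective subcategories and localizations at saturated classes of morphisms.

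First I would unpack the setup. By Proposition~\ref{prop. idempotents are localizations}, $L$ factors as $\cC \xrightarrow{L} L\cC \xrightarrow{i} \cC$ where $i$ is fully faithful and $L$ is left adjoint to $i$. In particular, there is a unit natural transformation $\eta: \id_\cC \to i \circ L$, and the idempotent structure guarantees that for every $X$ the map $L(\eta_X): L(X) \to L L(X)$ is an equivalence. Consequently every unit map $\eta_X: X \to L(X)$ lies in $S$, since its image under $L$ is an equivalence. Moreover, for $Y \in L\cC$ the counit $L(i(Y)) \to Y$ is an equivalence, since $i$ is fully faithful.

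Next, I would construct the comparison functor and its inverse. Because $L$ inverts every morphism in $S$ by the very definition of $S$, the universal property of the localization produces a functor $\bar L: \cC[S^{-1}] \to L\cC$ through which $L$ factors. For a candidate inverse, compose the inclusion $i: L\cC \hookrightarrow \cC$ with the localization functor $q: \cC \to \cC[S^{-1}]$ to obtain $\bar i := q \circ i$. On one side, the composition $\bar L \circ \bar i$ is equivalent to the identity on $L\cC$ because $L \circ i$ is equivalent to $\id_{L\cC}$ (the counit is an equivalence). On the other side, $\bar i \circ \bar L$ is equivalent to the identity on $\cC[S^{-1}]$ because the unit $\eta: \id_\cC \to i \circ L$ becomes a natural equivalence after passing to $\cC[S^{-1}]$, since each $\eta_X \in S$.

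The only real obstacle is making sure these ``equivalent to the identity'' statements are genuine equivalences of functors in the $\infty$-categorical sense, rather than pointwise observations. The cleanest way to handle this is to invoke Proposition~5.2.7.12 of~\cite{htt}, which asserts precisely that if $i: \cD \hookrightarrow \cC$ is a fully faithful right adjoint with left adjoint $L$, then $\cC[S^{-1}] \to \cD$ is an equivalence, where $S$ is the class of morphisms sent to equivalences by $L$. Applied to $\cD = L\cC$, this gives the result directly; the two verifications above are exactly the hypotheses of that proposition. Alternatively, one can appeal to the characterization of strongly saturated classes in~\cite{htt} Section~5.5.4 and observe that $S$ is generated by the unit maps $\eta_X$, but the direct citation is the most efficient route.
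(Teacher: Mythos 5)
Your argument is correct and ultimately rests on the same citation as the paper's proof, which simply invokes Proposition~5.2.7.12 of~\cite{htt}; the additional unpacking you give (units lie in $S$, the comparison functor and its inverse) is a faithful expansion of what that proposition encodes. No issues.
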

	
	\begin{proof}
		This follows from Proposition~5.2.7.12 of~\cite{htt}, which shows that $L\cC$ satisfies the mapping property that characterizes $\cC[S^{-1}]$.
	\end{proof}
	
	\begin{example}
		Following the enumeration of Example~\ref{example. idempotents}:
		\enum
		\item $L\cC$ is the full subcategory consisting of those rings in which $p$ is multiplicatively invertible.
		\item $L\cC$ is the full subcategory consisting of those objects $A \to S$ for which the map from $A$ factors through $U$.
		\item $L\cC$ is the full subcategory consisting of those objects $A \to X$ for which the map from $A$ factors through $U$.
		\enumd
	\end{example}
	
	\subsection{Symmetric monoidal localization}
	Let $\cC^\tensor$ be a symmetric monoidal $\infty$-category with underlying $\infty$-category $\cC$. Fix also a collection of morphisms $S \subset \cC$.
	
	\begin{definition}
		We say that $S$ is {\em compatible} with  $\tensor$ if for all $f, g \in S \times S$ we have $f \tensor g \in S$. 
	\end{definition}
	
	\begin{proposition}\label{prop. localization inherits symmetric monoidal structure}
		Suppose $S$ is compatible with $\tensor$.
		Then there is an induced symmetric monoidal structure on $\cC[S^{-1}]$ so that the functor $\cC \to \cC[S^{-1}]$ may be promoted to a symmetric monoidal functor.
		
		In fact, more is true: for any symmetric monoidal $\infty$-category $\cD^{\tensor}$, the $\infty$-category of symmetric monoidal functors $\fun^\tensor(\cC[S^{-1}]^{\tensor},\cD^{\tensor})$ is identified with the full subcategory of $\fun^\tensor(\cC^{\tensor},\cD^{\tensor})$ sending morphisms in $S$ to equivalences in $\cD$. This identification is given by composing with the symmetric monoidal functor $\cC^{\tensor} \to \cC[S^{-1}]^{\tensor}$ 
	\end{proposition}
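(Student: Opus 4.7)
The plan is to deduce this proposition directly from the machinery of compatible localizations in symmetric monoidal $\infty$-categories developed by Lurie; specifically, Proposition~4.1.7.4 of \cite{higher-algebra} together with the surrounding discussion in \S2.2.1 and \S4.1.7. Recall that a symmetric monoidal $\infty$-category is encoded as a coCartesian fibration $\cC^{\tensor} \to \mathrm{Fin}_*$, and that Lurie's proposition states that if $L: \cC \to \cC$ is a localization functor with the property that $L(f \tensor \id_X)$ is an equivalence whenever $Lf$ is and $X$ is arbitrary, then the reflective subcategory $L\cC \simeq \cC[S^{-1}]$ inherits a symmetric monoidal structure, $L$ upgrades to a symmetric monoidal functor, and this upgrade enjoys the expected universal property.

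To apply this, I would first construct the underlying $\infty$-categorical localization $L: \cC \to \cC[S^{-1}]$ using the tools of \cite{htt} \S5.2.7. Next, I would verify that our compatibility hypothesis translates into Lurie's. Since localizations depend only on the saturation of $S$ by equivalences, we may enlarge $S$ to contain all identity morphisms without loss of generality. Then applying $f \tensor g \in S$ with $g = \id_X$ yields $f \tensor \id_X \in S$ for every $f \in S$ and every object $X$, which in particular makes $L(f \tensor \id_X)$ an equivalence---precisely Lurie's hypothesis. With this translation in hand, the existence of the symmetric monoidal structure on $\cC[S^{-1}]$ and the symmetric monoidal promotion of the localization functor are immediate.

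For the refined universal property identifying $\fun^\tensor(\cC[S^{-1}]^{\tensor},\cD^{\tensor})$ with the full subcategory of $\fun^\tensor(\cC^{\tensor},\cD^{\tensor})$ whose underlying functors invert $S$, I would combine the symmetric monoidal universal property given by Proposition~4.1.7.4 with the underlying $\infty$-categorical localization universal property of \cite{htt} Proposition~5.5.4.20. The main technical subtlety, which is bookkeeping rather than a genuine obstacle, is exactly the translation between our pairwise compatibility and Lurie's formulation in terms of tensoring with identities; once this is in place, the entire statement reduces to an invocation of the cited results.
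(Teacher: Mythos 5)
The paper's proof is a one-line citation of Proposition~4.1.3.4 of \cite{higher-algebra} (numbered 4.1.7.4 in more recent versions of that book), which is exactly the result you name --- but you have misread what that proposition says, and the argument you build around your reading does not go through. Lurie's Proposition~4.1.7.4 concerns the \emph{Dwyer--Kan} localization $\cC[W^{-1}]$ of a symmetric monoidal $\infty$-category along a class $W$ compatible with the tensor product: it directly produces the symmetric monoidal structure on $\cC[W^{-1}]$, the symmetric monoidal promotion of $\cC \to \cC[W^{-1}]$, and the universal property identifying $\fun^\tensor(\cC[W^{-1}]^\tensor, \cD^\tensor)$ with the full subcategory of $\fun^\tensor(\cC^\tensor,\cD^\tensor)$ spanned by functors inverting $W$, with no reflectivity or presentability hypotheses. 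What you describe instead is the content of Proposition~2.2.1.9 of \cite{higher-algebra}: the case of a \emph{reflective} localization endofunctor $L\colon \cC \to \cC$. Your plan then requires constructing such an $L$ realizing $\cC[S^{-1}]$ via HTT~\S 5.2.7 and invoking HTT~5.5.4.20 for the universal property; but $\cC[S^{-1}]$ is not a reflective localization of $\cC$ for a general compatible class $S$ (in the paper's application, $\weincrit = \weinstr^{\dd}[(\subcrit^{\dd})^{-1}]$ is precisely such a not-a-priori-reflective Dwyer--Kan localization --- reflectivity onto the image of $-\times (T^*D^n)_P$ is a \emph{theorem} proved later, not an input), and HTT~5.5.4.20 requires $\cC$ presentable and $S$ a small set, neither of which is available. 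So the first and third steps of your outline have no footing. Once Proposition~4.1.7.4 is read correctly, the entire statement is an immediate citation and none of this machinery is needed.

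A smaller point: your reduction of the paper's compatibility hypothesis ($f,g \in S \Rightarrow f \tensor g \in S$) to Lurie's (tensoring a morphism of $W$ with an identity stays in $W$) by ``enlarging $S$ to contain all identities'' is circular as written. After enlarging, you apply the compatibility condition to the pair $(f, \id_X)$ in the enlarged class, but the enlarged class is not known to satisfy that condition --- that is exactly what you are trying to establish. The honest fix is to assume that $S$ contains all identities (or all equivalences), which holds in every application in the paper since $\subcrit^{\dd}$ contains the sectorial equivalences.
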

	
	\begin{proof}
		This is a consequence of Proposition 4.1.3.4 of~\cite{higher-algebra}.
	\end{proof}
	
	In~\cite{LAST_categories} we show that the $\infty$-category $\lioustab$ of stabilized Liouville sectors admits a symmetric monoidal structure, which on objects acts by the direct product of stabilized sectors (Remark~\ref{remark. lioustab symmetric monoidal}). This restricts to a symmetric monoidal structure on $\weinstab$. Because $\subcrit$ is compatible with direct product (Proposition~\ref{prop. subcritical is monoidal}), we see that $\subcrit^{\dd}$ is also compatible with the symmetric monoidal structure of $\weinstab$. Thus, we have
	
	\begin{corollary}\label{cor.lioucrit is symmetric}
		$\weincrit$ admits a symmetric monoidal structure, which on objects is given by direct product of (stabilized) Liouville sectors.
	\end{corollary}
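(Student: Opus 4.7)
The plan is to apply Proposition~\ref{prop. localization inherits symmetric monoidal structure} directly: I need to exhibit $\weincrit$ as the symmetric monoidal localization of a symmetric monoidal $\infty$-category along a class of morphisms compatible with the tensor structure. Concretely, I will view $\weincrit$ as $\weinstab[(\subcrit^{\dd})^{-1}]$ (using the earlier remark that, because $\eqs^{\dd} \subset \subcrit^{\dd}$, inverting $\subcrit^{\dd}$ in $\weinstr^{\dd}$ factors through $\weinstab$) and invoke the cited proposition to transport the symmetric monoidal structure from $\weinstab$ to $\weincrit$.

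First I would record that $\weinstab$ is a symmetric monoidal $\infty$-category whose tensor product on objects is the direct product of (stabilized) sectors. This follows by restriction from Remark~\ref{remark. lioustab symmetric monoidal}, since the product of two sectors admitting Weinstein structures again admits a Weinstein structure (namely, the sum of the Liouville forms and the sum of the Weinstein Morse functions), so $\weinstab \subset \lioustab$ is a full symmetric monoidal subcategory. The only thing to check here is that the monoidal unit of $\lioustab$, which is the point $T^*D^0$, is Weinstein, which is immediate.

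Next I would verify that $\subcrit^{\dd}$ is compatible with $\tensor$ in the sense of Section~\ref{sec: category}: for any pair of subcritical morphisms $f_i \colon X_i \to Y_i$ in $\weinstr^{\dd}$, the product $f_1 \times f_2$ is again subcritical. Proposition~\ref{prop. subcritical is monoidal} gives exactly this statement at the level of $\weinstr$, and it passes to $\weinstr^{\dd}$ because stabilization by $T^*[0,1]$ commutes with direct product. Since sectorial equivalences are a subclass of $\subcrit$, the class $\subcrit^{\dd}$ already contains $\eqs^{\dd}$, so the same compatibility holds when we view $\subcrit^{\dd}$ as a class of morphisms in $\weinstab$.

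With these two ingredients in place, Proposition~\ref{prop. localization inherits symmetric monoidal structure} produces a symmetric monoidal structure on $\weincrit = \weinstab[(\subcrit^{\dd})^{-1}]$ together with a symmetric monoidal localization functor $\weinstab \to \weincrit$. By construction, the tensor product on objects is computed in $\weinstab$ and hence is given by direct product of (stabilized) sectors, as asserted. The main subtlety is purely bookkeeping: one must make sure that the two successive localizations (first inverting stabilization to form $\lioustab$, then inverting sectorial equivalences and subcriticals) are compatible with the monoidal structure at each stage, but this is handled by Remark~\ref{remark. lioustab symmetric monoidal} and the cited $\infty$-categorical proposition. No new geometric input is needed.
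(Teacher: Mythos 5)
Your proposal matches the paper's own argument: both restrict the symmetric monoidal structure from $\lioustab$ to $\weinstab$, use Proposition~\ref{prop. subcritical is monoidal} to see that $\subcrit^{\dd}$ is compatible with the tensor product, and then invoke Proposition~\ref{prop. localization inherits symmetric monoidal structure} to transport the structure to the localization. The identification of $\weincrit$ with a localization of $\weinstab$ that you rely on is exactly the one the paper records in the remark following the definition of $\weincrit$, so no gap there.
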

	
	\subsection{Idempotent algebras}

	\begin{remark}[$E_\infty$-algebra background]\label{remark. Eoo algebra}
		Let us make some recollections on $E_\infty$ structures. From both homotopy theory and symplectic geometry, one may be used to the idea that $A_\infty$-algebras are a generalization of associative algebras wherein one demands the data of an infinite collection of coherences -- articulating associativity not through equalities, but through homotopies. Given a symmetric monoidal $\infty$-category $\cC$, one may speak of the commutative version of an $A_\infty$-algebra in $\cC$: an $E_\infty$-algebra in $\cC$. In fact, one often defines the term ``commutative algebra'' to mean $E_\infty$-algebra in this context -- see Definition~2.1.3.1 of~\cite{higher-algebra}. Unwinding the definitions there, an $E_\infty$-algebra in $\cC$ is a map of $\infty$-operads from the $E_\infty$-operad to the $\infty$-operad $\cC^\tensor$ defining the symmetric monoidal structure on $\cC$. Informally, an $E_\infty$-algebra structure on an object $A \in \cC$  consists of an infinite collection of higher data articulating both the associativity and commutativity of a multiplicative structure on $A$.
		
	Let us note that if $A$ is an $E_\infty$-algebra in $\cC$, then the endomorphisms of $A$ also form an $E_\infty$-algebra. For example, if the Fukaya category of $M$ admits a symmetric monoidal structure and a Lagrangian $L \subset M$ is an $E_\infty$-algebra in this Fukaya category, then the Floer cochain complex of $L$ admits an $E_\infty$-algebra structure in chain complexes.
	\end{remark}
	
	\begin{proposition}\label{prop. idempotent objects give symmetric monoidal localizations}
		Let $\cC^{\tensor}$ be a symmetric monoidal $\infty$-category and fix $A \in \ob \cC$. Endow the endofunctor  $L: \cC \to \cC$ given by $X \mapsto A \tensor X$ with an idempotent structure. (For example, via Example~\ref{example. smashing localization}.) 
		
		Then the essential image of $L$ -- {ie }the full subcategory $L\cC \subset \cC$ -- can be given a symmetric monoidal structure for which the functor $X \mapsto A \tensor X$ is symmetric monoidal, and for which $A$ is the symmetric monoidal unit.
	\end{proposition}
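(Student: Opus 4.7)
The strategy is to combine Propositions~\ref{prop. localizations are localizations} and~\ref{prop. localization inherits symmetric monoidal structure}. Let $S \subset \cC$ denote the class of morphisms sent to equivalences by $L$. By Proposition~\ref{prop. localizations are localizations}, the natural functor $\cC[S^{-1}] \to L\cC$ is an equivalence of $\infty$-categories, so it suffices to endow $\cC[S^{-1}]$ with a symmetric monoidal structure inherited from $\cC^{\tensor}$ and then identify the resulting monoidal unit with $A$.

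The main verification is that $S$ is compatible with $\tensor$. Fix $f, g \in S$; we must check that $L(f \tensor g) = A \tensor (f \tensor g)$ is an equivalence. Using the symmetry of $\tensor$, we have
\[
L(f) \tensor L(g) \simeq (A \tensor A) \tensor (f \tensor g),
\]
and since $L(f), L(g)$ are equivalences (as $f,g \in S$), the left-hand side is an equivalence. On the other hand, tensoring with $\eta_A : A \to A \tensor A$ produces a morphism from $L(f \tensor g)$ to $(A \tensor A) \tensor (f \tensor g)$, and by the definition of idempotent structure $\eta_A$ is itself an equivalence. Hence $L(f \tensor g)$ is an equivalence, so $f \tensor g \in S$.

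Having verified compatibility, Proposition~\ref{prop. localization inherits symmetric monoidal structure} provides a symmetric monoidal structure on $\cC[S^{-1}]^{\tensor} \simeq (L\cC)^{\tensor}$ such that the localization functor $\cC^{\tensor} \to (L\cC)^{\tensor}$ is symmetric monoidal. Under the equivalence $\cC[S^{-1}] \simeq L\cC$, this symmetric monoidal localization functor agrees with $L$ viewed as landing in its essential image, proving the first assertion. For the claim about the unit: the symmetric monoidal structure sends the monoidal unit $1_\cC$ of $\cC$ to its image in $L\cC$. The unit map $u = \eta_{1_\cC} \colon 1_\cC \to A$ satisfies $L(u) = L(\eta_{1_\cC})$, which is an equivalence by the definition of idempotent structure; therefore $u \in S$ and becomes an equivalence in $L\cC$, identifying the monoidal unit of $(L\cC)^{\tensor}$ with $A$.

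The only nontrivial step is the compatibility $S \tensor S \subseteq S$, and the key maneuver there is using the symmetry of $\tensor$ together with the fact that $\eta_A$ is an equivalence to reduce $L(f \tensor g)$ to $L(f) \tensor L(g)$. Everything else is a direct application of the cited categorical machinery.
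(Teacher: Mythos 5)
Your proof is correct and follows essentially the same route as the paper's: you define $S$ to be the class of morphisms inverted by $L$, verify that $S$ is compatible with $\tensor$, and then combine Proposition~\ref{prop. localizations are localizations} with Proposition~\ref{prop. localization inherits symmetric monoidal structure} to transfer the symmetric monoidal structure to $L\cC$ and identify the unit with $A$ --- the only difference being that you spell out the compatibility check that the paper dispatches with ``it follows from the idempotent structure.'' The one point worth making explicit in that check is that $\eta_A$ being an equivalence is not literally part of the definition of an idempotent structure (which only asserts this for $\eta_{L(X)}$ and $L(\eta_X)$); it follows because $A \simeq A \tensor 1_\cC = L(1_\cC)$ and $\eta$ is natural, or, in the setting of Example~\ref{example. smashing localization}, it is exactly the hypothesis that $u \tensor \id_A$ is an equivalence.
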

	
	\begin{proof}
		
		Let $S$ be the class of morphisms $f: X \to Y$ in $\cC$ such that the induced map $\id_A \tensor f: A \tensor X \to A \tensor Y$ is an equivalence. 
		It follows that $S$ is compatible with $\tensor$ by using the idempotent structure. So $\cC[S^{-1}]$ is symmetric monoidal and the map $\cC \to \cC[S^{-1}]$ may be promoted to be symmetric monoidal by Proposition~\ref{prop. localization inherits symmetric monoidal structure}. Further, the natural map from $\cC[S^{-1}]$ to the essential image of $L$ is an equivalence by Proposition~\ref{prop. localizations are localizations}. Transferring the symmetric monoidal structure accordingly, we obtain a symmetric monoidal functor
		\eqnn
		\cC \to L\cC,
		\qquad
		X \mapsto A \tensor X.
		\eqnd
		Moreover, because $\cC \to L\cC$ is the underlying functor of a symmetric monoidal functor, the image of the unit of $\cC$ is the unit of $L\cC$; this means $1_\cC \tensor A \simeq A$ is the symmetric monoidal unit. 
	\end{proof}
	
	\begin{remark}
		Let us explicate the symmetric monoidal structure on $L\cC$ guaranteed by Proposition~\ref{prop. idempotent objects give symmetric monoidal localizations}. Choose two objects of $L\cC$, which we may as well write as $A \tensor X$ and $A \tensor Y$. Because $\cC \to L\cC$ admits a symmetric monoidal enhancement, we conclude that the monoidal product on $L\cC$ may be computed as
		\eqnn
		(A \tensor X)  \tensor_{L\cC} (A \tensor Y)
		\simeq
		LX \tensor_{L\cC} LY \simeq
		L(X \tensor Y)
		\simeq
		A \tensor (X \tensor Y),
		\eqnd
		where the middle equivalence uses the fact that $L$ is (symmetric) monoidal.
		Thus, it would not betray one's intuition to notate the monoidal product of $L\cC$ as $\tensor_A$, rather than $\tensor_{L\cC}$.
	\end{remark}
	
	\begin{corollary}\label{corollary. idempotent objects are algebras}
		Let $\cC^\tensor$ be a symmetric monoidal $\infty$-category with unit $1_\cC$, and fix a map $u: 1_\cC \to A$. Suppose that the induced map $u \tensor \id_A : 1_\cC \tensor A \to A \tensor A$ is an equivalence. Then 
		\enum
		\item The functor $ \cC \to L \cC$ may be promoted to be symmetric monoidal.
		\item\label{item. L is lax} The functor $L : \cC \to \cC$ may be promoted to be lax symmetric monoidal. 
		\item $A$ inherits the structure of a commutative algebra in $\cC^{\tensor}$ with unit map $u$.
		\enumd
	\end{corollary}
	
	\begin{proof}
		By Example~\ref{example. smashing localization}, the endofunctor $L: X \mapsto X \tensor A$ is idempotent. By Proposition~\ref{prop. idempotent objects give symmetric monoidal localizations}, the induced localization $\cC \to L \cC$ may be promoted to be symmetric monoidal.  This proves the first claim.
		
		Moreover, any right adjoint to a symmetric monoidal functor is automatically lax symmetric monoidal. (This follows from Corollary~7.3.2.7 of~\cite{higher-algebra}. In the notation of loc. cit., one sets $\cO^\tensor$ to be the nerve of the category of finite pointed sets, and notices that if $F$ is a symmetric monoidal functor, then the fact that the underlying functor $\cC \to L\cC$ is a left adjoint implies that the product functors $\cC^n \to (L\cC)^n$ are also left adjoints. 
		Finally, note that a map of $\infty$-operads between symmetric monoidal $\infty$-categories is precisely a lax symmetric monoidal functor -- see the comments before Definition~2.1.3.7 of~\cite{higher-algebra}.) Noting that $L$ factors as a composition
		\eqnn
		\cC \to L\cC \into \cC
		\eqnd
		where the first arrow may be promoted to be symmetric monoidal, and the latter arrow to a lax symmetric monoidal functor, we see that $L: \cC \to \cC$ is lax symmetric monoidal. This proves the second claim.
		
		Because lax symmetric monoidal functors send commutative algebras to commutative algebras, we conclude that $A$ (the unit of $L\cC$, hence a commutative algebra in $L\cC$) is sent to a commutative algebra object in $\cC$. This proves the last claim.
	\end{proof}
	
	\begin{remark}
		If one likes, one need not use the ``lax'' terminology in the above proof. Corollary~7.3.2.7 of~\cite{higher-algebra} guarantees that the right adjoint becomes a map of $\infty$-operads $j:(L \cC)^{\tensor} \to \cC^{\tensor}$, hence one can compose any map of $\infty$-operads from the category of finite pointed sets to $L\cC$ with $j$. On the other hand, an $\infty$-operad map from the category of finite pointed sets is the definition of a commutative algebra object.
	\end{remark}
	
	\begin{remark}
		Though we will not need this, we note that Corollary~\ref{corollary. idempotent objects are algebras} is functorial in the $(A,u)$ and $\cC^{\tensor}$ variables. Call an $\EE_0$-algebra $u: 1_\cC \to A$  {\em idempotent} if the condition in Corollary~\ref{corollary. idempotent objects are algebras} is satisfied. It is straightforward to modify the proof to exhibit a functor from the $\infty$-category of idempotent $\EE_0$-algebras in $\cC^\tensor$ to the $\infty$-category of commutative algebra objects in $\cC^\tensor$. This can further be promoted to be a map of coCartesian fibrations over the $\infty$-category of symmetric monoidal categories, mapping the coCartesian fibration classifying idempotent $\EE_0$-algebras to the coCartesian fibration classifying commutative algebras.
	\end{remark}
	
	\begin{remark}
		In the setting of Corollary~\ref{corollary. idempotent objects are algebras}, let $m: A \tensor A \to A$ denote the product of $A$ associated to the guaranteed commutative algebra structure of $A$. Since $A$ is a unital (commutative) algebra, we know that the composition
		\eqnn
		A \simeq 1 \tensor A \xrightarrow{u \tensor \id_A} A \tensor A \xrightarrow{m} A
		\eqnd
		is homotopic to the identity. On the other hand, $u \tensor \id_A$ is an equivalence by assumption. Thus, we see that $m$ is in fact an equivalence, and is naturally (up to structure maps guaranteed by the symmetric monoidal structure of $\cC$) identified as a homotopy inverse to $u \tensor \id_A$.
	\end{remark}
	
	\begin{remark}
		In the setting of Corollary~\ref{corollary. idempotent objects are algebras}~\eqref{item. L is lax},
		the lax monoidal structure maps
		\eqnn
		L(X) \tensor L(Y) \to L(X \tensor Y)
		\eqnd
		are all equivalences. 
		To see why, one can unwind the definitions and observe that the structure maps are the equivalences
		\eqnn
		A \tensor X \tensor A \tensor Y \to A \tensor X \tensor Y
		\eqnd
		guaranteed by the idempotent structure.
		
		However, the structure map from the unit $1_{\cC} \to L(1_{\cC}) \in \cC$ may not be an equivalence---see Example~\ref{example. idempotents}. This is the only reason that the right adjoint is not symmetric monoidal, and is only lax. 
	\end{remark}

	\section{Two models for P-flexibilization}\label{sec: first_look_flexibilization}
	
	In this section, we review a construction from the first two authors' previous work~\cite{Lazarev_Sylvan} that localizes the wrapped Fukaya category.  This construction has the advantage of taking Weinstein sectors to equi-dimensional Weinstein domains (and takes Weinstein domains to domains, instead of sectors). A priori it is not Weinstein homotopy invariant and not defined for general Liouville sectors (as opposed to Weinstein sectors).  In Section~\ref{sec: htpy_invariant_P-flex}, we introduce an alternative P-flexibilization functor, which  manifestly preserves homotopy equivalences.  
	
	\subsection{Carving out Lagrangian disks}\label{sec: carving_out}

	The construction from~\cite{Lazarev_Sylvan} relies on a procedure for removing Lagrangian disks, which we now explain. 
    Consider $T^*D^n$ with Liouville structure induced by the smooth vector field on the zero-section $D^n$ that vanishes on $\partial D^n \times [1-\epsilon, 1]$, points towards the origin in $\partial D^n \times (0, 1-\epsilon]$, and has an isolated zero at the origin; see the left diagram in Figure \ref{fig: Weinstein_TDn}. 
    Let $T^*D^n_{1/2}$ denote 	the cotangent bundle of the disk of radius $1/2$, with Liouville structure inherited from $T^*D^n$. Note that the Liouville vector field points inward near $\partial T^*D^n_{1/2}$ so that $T^*D^n_{1/2}$ is not a sector.
	Let $X^{2n}$ be a Liouville sector with a properly embedded Lagrangian disk $C$ so that 
	$\lambda_X|_C  = 0$. 
	\begin{defn}
		\label{defn. parametrization}
		A strict proper inclusion
		$$
		\phi_C: T^*D^n_{1/2} \hookrightarrow X
		$$
		that sends the cotangent fiber $T^*_0 D^n$ to the Lagrangian $C$ is called a {\em parametrization} of a neighborhood of $C$. 
	\end{defn}	
	
	\begin{remark}
		We note that for any Lagrangian disk $C$ with Legendrian boundary, there is a Liouville homotopy supported in a neighborhood of $C$ (hence an interior Liouville homotopy)  so that a neighorhood of $C$ has a parametrization for the new Liouville form. If $C$ is strictly exact, then there is a Liouville homotopy $\lambda_t$ relative to $C$, {ie }vanishes on $C$, so that $C\subset (X, \lambda_1)$ has a parametrization. 
	\end{remark}

    Since the Liouville vector field points into $\phi_C(T^*D^n_{1/2})$ near $\phi_C(\partial T^*D^n_{1/2} )$, $X \setminus \phi_C(T^*D^n_1/2)$ is a new Liouville sector. More precisely, since $\phi_C(T^*D^n_{1/2})$ is contained in the interior of $X$ and away from the sectorial boundary of $X$,   
    $X \setminus \phi_C(T^*D^n_{1/2})$ has the same sectorial boundary as $X$. However, when we remove $\phi_C(T^*D^n_1/2)$  from $X$, we remove part of the contact boundary of $X$, namely the intersection of $\phi_C(T^*D^n_{1/2})$ with the contact boundary of $X$, {ie }a neighborhood of the Legendrian boundary of  $C$. On the other hand,     
    $X \setminus \phi_C(T^*D^n_1/2)$ has a new portion of contact boundary corresponding to $\phi_C(\partial T^*D^n_{1/2})$; since the Liouville vector field points into $\phi_C(T^*D^n_{1/2})$ near $\phi_C(\partial T^*D^n_{1/2})$, the Liouville vector field on   $X \setminus \phi_C(T^*D^n_{1/2})$ points \textit{outward} along  $\phi_C(\partial T^*D^n_{1/2})$, which therefore is part of the contact boundary of $X \setminus \phi_C(T^*D^n_{1/2})$.
    That is, the contact boundary of  $X \setminus \phi_C(T^*D^n_{1/2})$ is obtained from the contact boundary of $X$ by contact \textit{anti}-surgery on the Legendrian boundary $\partial C$, which lies in the contact boundary of $X$.

	\begin{notation}[$X \setminus C$]
		\label{notation. X minus C}
		Often, we will drop the choice of parametrization $\phi_C$ and use the notation $X\setminus C$ to denote this sector, which we say is obtained from $X$ by \textit{carving out} $C$. 
	\end{notation}
	
	\begin{remark}	
		The following fact will be used repeatedly in the proofs of the main results Theorems \ref{thm: intro_idempotent_functor}, \ref{thm: intro_comparison}: If $(X, \lambda_t)$ is a homotopy relative to $C$, then there is an induced homotopy $(X \backslash C, \lambda_t)$.
	\end{remark}
	
	Next we discuss the Weinstein case. 
	\begin{example}
		Any Weinstein sector $X^{2n}$ has a distinguished collection of Lagrangian disks, namely the co-cores $C_i$ of the index $n$ critical points, which can be parametrized if they are properly embedded.
		Note that if the critical values of the index $n$ critical points is larger than the critical values of the lower index critical points, then the Lagrangians co-cores are properly embedded. Any Weinstein  structure can be homotoped through Weinstein  structures to one of this form and hence to one whose index $n$ co-cores are properly embedded. 
		Then for each $i$, $X \setminus \phi_{C_i}(T^*D^n_{1/2})$ is  a Weinstein sector and $X \setminus \coprod_i \phi_{C_i}(T^*D^n_{1/2})$ is a subcritical Weinstein sector.

		More generally, by Proposition 2.3 of~\cite{EGL}, any regular Lagrangian disk (where the Liouville vector field is taken to point \textit{outward} near $\partial T^*D^n_{1/2}$) has a parametrized neighborhood  (where the Liouville vector field points \textit{inward} near $\partial T^*D^n_{1/2}$)  after a further Weinstein homotopy. 	\end{example}

\begin{examples}\label{examples: carving out Lagrangian unknot}
Next, we define a class of Lagrangians called Lagrangian unknots and describe the result of carving out these Lagrangians. Any point of the contact boundary of $X$ has a standard Darboux chart $B^{2n-1}$ in this contact boundary, and by extending this Darboux chart into $X$, we obtain a standard chart in $X$. This chart in $X$ can viewed as a proper inclusion of 
$T^*D^n_{+} \hookrightarrow X$. Here $T^*D^n_+$  has Liouville structure induced by the vector field on $D^n_+$ that is inward-pointing along the one hemisphere $D^{n-1}_-$ of $\partial D^n = S^{n-1}$ and outward-pointing along the other hemi-sphere $D^{n-1}_+$, and the proper inclusion sends $\partial D^{n_1}_+$ to the contact boundary of $X$. 
Then a Lagrangian $L$ in $X$ is called a \textit{Lagrangian unknot} if it arises as  the image of the cotangent fiber of $T^*D^n_+$ in $X$. We emphasize that Lagrangian unknots are disks rather than spheres, as in smooth topology.
Alternatively, a Lagrangian $L$ is a Lagrangian unknot if there is a Weinstein homotopy of $(X, \lambda_0)$ to $(X, \lambda_1)$ that creates two \textit{cancelling} Weinstein handles of index $n-1, n$, above all existing Weinstein handles, and the $L$ is the co-core of the index $n$ handle. Consequently, by removing $L$ and forming 
$X\backslash L$, we are still left with the index $n-1$ handle. Hence $X\backslash L$ is obtained from $X$ by adding an index $n-1$ Weinstein handle along a subcritical isotropic sphere. 
\end{examples}

	\begin{defn}
		Let $\weinparam$ be the category whose objects are $(X, \{\phi_{C_X}\})$, where $X$ is a  Weinstein sector with  parametrized  \textit{properly embedded} Lagrangian co-cores $\{C_X\}$ and $\phi_{C_X}: T^*D^n_{1/2} \hookrightarrow X$ is a strict proper inclusion parametrizing $C_X$.		Morphisms are strict proper inclusions of Liouville sectors that respect this parametrization. Namely, if $i: X\hookrightarrow Y$ is a strict proper inclusion, then
		each co-core $C_X$ of $X$ is also a co-core of $Y$ and we require $i \circ \phi_{C_X}  = \phi_{C_Y}$, where $\phi_{C_X}, \phi_{C_Y}$ are the parametrizations of the same co-core in $X, Y$ respectively. Note that there is a forgetful functor from $\weinparam$ to $\weinstr$.	
		
	\end{defn}

	\subsection{P-flexibilization for strict, parametrized Weinstein sectors}\label{sec: P-flexibilization_strict_parametrized}
	
	Recall that if $L \subset (T^*D^n, \lambda_{T^*D^n, std})$ is a regular disk (Definition~\ref{defn. regular disk}), then there is an interior Weinstein homotopy from the standard structure to a Weinstein structure $({T^*D^n}, \lambda_L)$ for which $L$ is strictly exact and has a parametrized neighborhood. 
	\begin{notation}
		\label{notation. L disk}
		We set
		$$
		(T^*D^n)_L := (T^*D^n, \lambda_L) \setminus \phi_L(T^*D^n_{1/2}).
		$$
		(See Notation~\ref{notation. X minus C}.)
	\end{notation}
	
	This is a Weinstein sector whose sectorial boundary has a canonical identification with the sectorial boundary $\partial T^*D^n$  of $T^*D^n$. In fact, we will assume that $\phi_L(T^*D^n_{1/2})$ is contained in $T^*D^n_{1/2}$ and that the Weinstein homotopy from $(T^*D^n, \lambda_{T^*D^n, std})$ to $({T^*D^n}, \lambda_L)$ is supported inside this region; hence, we can also consider the subset $(T^*D^n_{1/2})_L: = (T^*D^n_{1/2}, \lambda_L)\setminus \phi_L(T^*D^n_{1/2})$.	
	Later we will take a more explicit model for $(T^*D^n)_L$ suitable to our purposes. 
	Finally, we also fix parametrizations of the Lagrangian 
	co-cores of $(T^*D^n)_L$, which we can assume are properly embedded.

	More generally, consider a Weinstein  sector $(X, \phi)$ with parametrized middle-dimensional co-cores $\{C_X\}$. So we have, for each middle-dimensional cocore $C_X$, a strict proper inclusion $\phi_{C_X}: T^*D^n_{1/2} \hookrightarrow X$ that takes $T^*_0 D^n$ to $C_X$. 
	\begin{notation}
		We define
		$$
		X_L: = (X \setminus \coprod_{C_X} \phi_{C_X}(T^*D^n_{1/2})) \cup \coprod_{C_X} (T^*D^n_{1/2})_L
		$$
		where we glue $\phi_{C_X}(\partial T^*D^n_{1/2})$ and the  copy of $\partial (T^*D^n_{1/2})_L$ corresponding to $C_X$. 
	\end{notation}
	This is a Weinstein sector and has parametrized critical Weinstein handles since this is true for 
	$(X \setminus \coprod_{C_X} \phi_{C_X}(T^*D^n_{1/2}))$, which is subcritical, and also true for $\coprod_{C_X} (T^*D^n_{1/2})_L$. 
	Note that $X_L$ makes sense even if $L \subset T^*D^n$ is a finite set of several disjoint Lagrangian disks. 
	
	\begin{remark}
		The whole construction of $X_L$ can be summarized by taking a certain Weinstein homotopy $(X, \lambda_X)$ to $(X, \lambda_{X, L})$ (supported near the index $n$ co-cores) which now has many copies of $L$ as co-cores and a Liouville vector field that points toward those co-cores, and then carving out those copies of $L$. 
		More precisely, if $C_X$ is a co-core of $X$, then we let $C^L_X := \phi_{C}(L) \subset X$ denote the copy of $L$ in the tubular neighborhood $\phi_{C_X}(T^*D^n)$ of $C_X$; then we also have
		$$
		X_L \cong (X, \lambda_{X, L}) \setminus \coprod_{C_X} {C_X^L}
		$$
		or just 
		$$
		X \setminus \coprod_{C_X} {C_X^L}
		$$
		when $\lambda_{X, L}$ is clear. 
	\end{remark}

	By construction,  there is a sectorial subdomain inclusion $X_L \subset X$; by Proposition \ref{prop: convert_subdomain_to_proper_inclusion}, this gives rise to a morphism $X \rightarrow X_L$ in the critical category $\weincrit$. 	
	By~\cite{ganatra_generation}, this implies that the Fukaya category of $X_L$ is a localization of the Fukaya category of $X$ obtained by nullifying $L$ viewed as an object the Fukaya category; that is, $Tw \  \mathcal{W}(X_L) \simeq Tw \ \mathcal{W}(X)/\coprod_{\{C_X\} } C_X^L$.

	Next we discuss functoriality. 
	\begin{notation}[$i_L$]
		\label{notation. i_L}
		Given a strict proper inclusion of Weinstein sectors $i: X \hookrightarrow Y$ preserving parametrizations of properly embedded Lagrangian co-cores, there is an induced strict proper inclusion
		$$
		i_L: X_L \hookrightarrow Y_L 
		$$
		of Weinstein  sectors (again preserving parametrizations of co-cores). To see this, recall that the Lagrangian co-cores of $X$ are a subset of the Lagrangian co-cores of $Y$ (Remark~\ref{remark. cocores of X and Y}) so when we carve out copies of $L$ near the Lagrangian disks of $Y$, we do the same for $X$. Furthermore, $(j \circ i)_L = j_L \circ i_L$ for the same reason.		
		Then $( \ )_L$ defines an endofunctor of $\weinparam$.
	\end{notation}

	The main issue is that the functor $(\ )_L$ does not preserve Weinstein (or Liouville) homotopy equivalences. 
	More precisely, if $X, X'$ are Weinstein homotopic Weinstein  sectors, then $X_L$ and $X_L'$ need not be Weinstein homotopic, {ie }$X_L$ depends on the Weinstein presentation of $X$. 
	For example, we can Weinstein homotope $X$ 
	to a Weinstein presentation $X'$ with many more index $n$ handles, in which case $X_L'$ is not even homotopy equivalent to $X_L$ as topological spaces. Next, we give a more drastic example. 
    
	\begin{examples}\label{example: non-homotopy}
		Consider the standard Weinstein presentation $(T^*S^n, \lambda_0)$ with a single co-core $T^*_x S^n$ or  homotopic structure $(T^*S^n, \lambda_1)$ with two co-cores $T^*_{x_1} S^n, 
		T^*_{x_2} S^n$; these structures are obtained by taking Morse functions on $S^n$ with either one or two index $n$ critical points. Then $(T^*S^n, \lambda_0)_L$ is $T^*S^n\setminus (T^*_x S^n)^L$,  while
		$(T^*S^n, \lambda_1)_L$ is $T^*S^n\setminus (T^*_{x_1} S^n)^L \coprod (T^*_{x_2} S^n)^L$. So in the former case we carve out one copy of $L$ while in the latter case we carve out two copies of $L$. Even if we add flexible handles to make these two domains diffeomorphic, it is not at all clear that they would become symplectomorphic (and we do not know whether this is the case). The issue is that  ${(T^*_{x_1} S^n)}^L \coprod {(T^*_{x_2} S^n)}^L$ is not necessarily a \textit{parallel} Lagrangian link, as we discuss further in Section \ref{ssec: idempotency_proof}. However, the realization that these spaces become Weinstein homotopic after stabilizing and inverting subcriticals was the original impetus for this project. To show that $P$-flexibilization is completely Weinstein homotopy invariant, one further needs to consider arbitrary Weinstein homotopic structures $(T^*S^n, \lambda)$ on $T^*S^n$, not just the simple structure considered above. 
	\end{examples}
	Furthermore, $X_L$ is only defined for Weinstein sectors, but not general Liouville sectors. We will remedy these issues in the next section.

	\subsubsection{Abouzaid-Seidel and Lazarev-Sylvan constructions}\label{sec: abouzaid_seidel}
	Next, we explain how to construct $X[P^{-1}]$ for a set of integers $P$
	and any Weinstein sector $X$	 with $\dim X \ge 10$. We follow previous work of the first two authors \cite{Lazarev_Sylvan}, which is a variant of the construction of Abouzaid and Seidel \cite{abouzaid_seidel_recombination}.
	First for $p \in P$, we consider  a certain Lagrangian disk $D_p \subset T^*D^n$ from \cite{abouzaid_seidel_recombination}, Section 3b. We briefly recall its definition: $D_p$ is the graph of the differential $d(f_p) \subset T^*D^n$ for a function $f_p: D^n \rightarrow \mathbb{R}$, which near $\partial D^n$ looks like $r^2 g_p(\theta)$ 
	for a function $g_p: S^{n-1}\rightarrow \mathbb{R}$ that is positive on the tubular neighborhood of a $p$-Moore space $U_p \subset S^{n-1}$ and negative on the complement of this $p$-Moore space; here $r$ is radial coordinate and $\theta$ is a coordinate on $S^{n-1} = \partial D^n$. This construction requires embedding a $p$-Moore space in $S^{n-1}$ and hence works only for $n \ge 5$. 

    Next, consider the disjoint embedding $\coprod_{p \in P} D_p \subset T^*D^n$ (obtained by embedding $|P|$ copies of $D^n$ disjointly into $D^n$ and considering the induced proper inclusion $\coprod_{p \in P} T^*D^n \hookrightarrow T^*D^n$).     
We also note that $D_p \subset T^*D^n$ is a regular Lagrangian disk because $D_p$ is Lagrangian isotopic to the zero-section $D^n \subset T^* D^n$. Indeed,  $T^*D^n$ is associated to the stopped domain $(B^{2n}, \partial D^n)$ and $B^{2n}$ is a tubular Weinstein neighborhood $T^*D^n$ of the zero-section $D^n$. Since $D_p$ is Lagrangian isotopic to the zero-section $D^n$, $B^{2n}$ is also a tubular Weinstein neighborhood of $D_p$. Hence, $T^*D^n$ is obtained from a Weinstein tubular neighborhood of $D_p$ by adding a stop, 
and so $D_p$ is a regular Lagrangian.

	The definition of $X[P^{-1}]$ by the first two authors \cite{Lazarev_Sylvan} is essentially 
	\begin{notation}\label{notation. X_P}
		$X[P^{-1}]: = X_{\coprod_{p\in P} D_p}$ 
	\end{notation}
	More precisely, they also added flexible handles to $X_{\coprod_{p\in P} D_p}$ to ensure that the result is diffeomorphic to $X$. We will not do this in this paper; see the discussion in the next section. 
    \begin{examples}\label{example: X1 is X}    
If $P$ is the empty set, then no Lagrangian disks are carved out of $X$ and so $X[P^{-1}] = X$. 

If $P = 1$, then the 1-Moore space is homotopy equivalent to a point and $D_1$ is a Lagrangian unknot. Hence by Example \ref{examples: carving out Lagrangian unknot}, $X[\{1\}^{-1}]$ is equivalent to $X$ plus some subcritical handles of index $n-1$, one for each index $n$ handle of $X$. In particular, $X[\{1\}^{-1}]$ is equivalent to $X$ in $\weincrit$. 
\end{examples}

	\begin{remark}
		\label{remark. D_U for other U}
		Note the construction of the Lagrangian disk $D_p$ can be generalized by using any codimension zero subdomain $U \subset S^{n-1}$ instead of just the p-Moore space $U_p$. Indeed, Abouzaid and Seidel \cite{abouzaid_seidel_recombination}
		showed that the Lagrangian disk $D_U \cong \tilde{C}^{*-1}(U) \otimes T^*_0 D^n$ in the wrapped Fukaya category $Tw \  \mathcal{W}(T^*D^n)$; see~\cite{Lazarev_Sylvan} for details. 
		So if $U$ is a $p$-Moore space, then $D_{U} = D_p \cong (\mathbb{Z}[1] \overset{p}{\rightarrow} \mathbb{Z}) \otimes T^*_0 D^n$ in $Tw \ \mathcal{W}(T^*D^n)$. Using this result along with the localization formula of Ganatra-Pardon-Shende \cite{ganatra_generation}, the first two authors \cite{Lazarev_Sylvan} proved that  
		$$
		Tw \  \mathcal{W}(X[P^{-1}]) \cong Tw \ \mathcal{W}(X) \otimes \mathbb{Z}\left[\frac{1}{P}\right].
		$$
		We observe that since they are all graphical Lagrangians, the disks $D_U \subset T^*D^n$ are all Lagrangian isotopic to the zero-section $D^n \subset T^*D^n$ if we allow the boundary of $D_U$ to intersect the sectorial divisor, {ie }$D_U$ is isotopic to $D^n$ in the \textit{unstopped} domain $B^{2n}$. 
	\end{remark}

	\subsubsection{Smooth topology of $X_L$ and flexible handles}\label{sec: topology_flex_handles}

	From the point of view of classifying symplectic structures on a fixed smooth manifold, it can be desirable to have symplectic constructions preserve diffeomorphism type. 
	For example, the flexibilization $X_{flex}$ of $X$ defined by Cieliebak  and Eliashberg \cite{CE12} is a flexible domain diffeomorphic to $X$. 
	As we will see in Proposition~\ref{prop: carve_double L}, in this paper we can ensure that all constructions preserve the diffeomorphism type up to smoothly subcritical handles; since we work in $\weincrit$, Weinstein sectors up to subcritical handles and stabilization, this is the only sensible notion.  
	
	Since $X_L$ is obtained by removing a disk from $X$,  it is never diffeomorphic to $X$. Furthermore, if the (Poincar\'{e} dual) class $[C_X^L] \in H^n(X; \mathbb{Z})$ is non-zero,  
	then 
	$X$ and $X_L$ have different middle-dimensional cohomology and hence fail to be diffeomorphic up to subcriticals. However, if $L \subset T^*D^n$ is \textit{smoothly} isotopic to an unknotted disk, then $X_{L}$ is diffeomorphic to X (but not necessarily symplectomorphic to X)  up to adding subcritical handles.
	We also observe that for any Lagrangian disk $L\subset T^*D^n, n \ge 5$, the double $L \natural \overline{L}$ is \textit{smoothly} isotopic to the unknotted disk; here $ \overline{L}$ is $L$ with the opposite orientation and $\natural$ is the isotropic boundary connected sum. Furthermore, if $L$ is regular, then so is $L \natural\overline{L}$. 
	\begin{proposition}\cite{Lazarev_geometric_presentations}\label{prop: carve_double L}
		For any regular Lagrangian disk $L \subset T^*D^n, n \ge 3$,
		$X_{L\natural \overline{L}}$ is Weinstein homotopic to $X_L$ plus some flexible handles  and is diffeomorphic to $X$ up to smooth subcritical handles. 
	\end{proposition}
	So whatever we can prove about $X_L$ (for arbitrary regular Lagrangians $L$), we can also prove about $X_{L\natural\overline{L}}$, which is diffeomorphic to $X$ up to smooth subcritical handles. 
	We also note that $Tw\  \mathcal{W}(X_{L\natural \overline{L}}) \cong Tw\ \mathcal{W}(X_L)$ since they are related by flexible handles. 
	
	In Proposition \ref{prop: carve_double L},    $X_{L\natural \overline{L}}$ is Weinstein homotopic to $X_L \cup H^{n-1} \cup H^n_{flex}$, where one first attaches a subcritical handle $H^{n-1}$ and a flexible handle $H_{flex}^n$ (attached along a loose Legendrian) for each handle of $X$. In particular, there is no need to discuss flexible cobordisms separately in this paper since they appear naturally by removing the Lagrangian $L\natural\overline{L}$ (instead of $L$). Using the h-principle for loose Legendrians in a certain local setting, the first author~\cite{Lazarev_geometric_presentations}  also showed that the flexible handle $H^n_{flex}$ can be attached \textit{before} the subcritical handle $H^{n-1}$
	and hence 
	$X_{L\natural \overline{L}} = (X_L \cup H^{n-1}) \cup H^{n}_{flex} 
	$ is Weinstein homotopic to 
	$(X_L \cup  H^{n}_{flex}) \cup H^{n-1} =: X_L'\cup H^{n-1}$, where $X_L'$ is actually diffeomorphic to $X$; as before, $Tw\  \mathcal{W}(X_{L}') \cong Tw\ \mathcal{W}(X_L)$ since attaching subcritical handles doesn't affect the Fukaya category. So in fact, $X_L'$ and $X_{L\natural L}$ are equivalent in $\weincrit$, assuming this local form of h-principle.

	\begin{example}\label{example: X0 is flexible}
If $P = \{0\}$, we observe that $X[\{0\}^{-1}]$ is flexible.   To see this, we first consider the cotangent fiber $L = T^*_0 D^n$ and observe that $X_{T^*_0 D^n}$ is the subcritical part $X_{sub}$ of $X$. Also, $X_{T^*_0 D^n\natural\overline{T^*_0 D^n}} = X_{sub}\cup H^{n-1}\cup H^n_{flex}$ is flexible  and diffeomorphic to $X$ up to some smooth  subcritical handles. 
		Using the h-principle for loose Legendrians in the local setting,~\cite{Lazarev_crit_points, Lazarev_geometric_presentations} verified that  $X_{T^*_0 D^n\natural\overline{T^*_0 D^n}}$ is actually Weinstein homotopic to $X_{flex} \cup H^{n-1}$, where $X_{flex}$ is the flexibilization defined in~\cite{CE12} (a flexible domain diffeomorphic to $X$).
		In particular,  $X_{T^*_0 D^n\natural\overline{T^*_0 D^n}}$ is equivalent to $X_{flex}$ in $\weincrit$. 
Finally, we note that $T^*_0 D^n \natural \overline{T^*_0 D^n}$ is Lagrangian isotopic to $D_0$, the Abouzaid-Seidel disk constructed using a $0$-Moore space $S^1 \vee S^2$; see \cite{Lazarev_Sylvan} for a proof. 
		So by definition $X[\{0\}^{-1}]$ is $X_{{T^*_0 D^n} \natural \overline{T^*_0 D^n}}$. Therefore $X[\{0\}^{-1}]$ is flexible and diffeomorphic to $X$ up to subcritical handles (and equivalent to $X_{flex}$ is $\weincrit$ assuming the local h-principle). 

        However, if one is not interested in comparing $X_{T^*_0 D^n\natural \overline{T^*_0 D^n}}$ to  $X_{flex}\cup H^{n-1}$ (but only using the fact that $X_{T^*_0 D^n\natural \overline{T^*_0 D^n}}$ is flexible),  there is no need to use the h-principle for loose Legendrians. We take this approach, viewing $X_{T^*_0 D^n\natural \overline{T^*_0 D^n}}$ as the flexibilization (since it is flexible and diffeomorphic to $X$ up to subcriticals) 
		and proving idempotency and independence of presentation for this domain. 
		Since the h-principle for loose Legendrians is used in a single local setting, we also expect that it is possible to compare these domains explicitly without using the h-principle at all.

	\end{example}
	
	\begin{remark}[Carving Lagrangians may be assumed connected]
		Returning briefly to the construction of $X[P^{-1}]$ in the previous section, 
		we note that $D_p$ is trivial in $H^n(X)$
		(this is true for $D_U$ for any $U$ with Euler characteristic $\chi(U) = 1)$. So by the smooth h-cobordism theorem, $X[P^{-1}]$ is diffeomorphic to $X$, up to subcritical handles. We also mention a slight variant which uses connected Lagrangian disks. Namely, we form the  Lagrangian disk $\natural_{p \in P} D_p$, the isotropic boundary connected sum of $\coprod_{p\in P} D_p$. Then, as proven in \cite{Lazarev_geometric_presentations},  $X_{\natural_{p \in P} D_p}$
		is obtained by adding a flexible cobordism to 	$X[P^{-1}]:= X_{\coprod_{p \in P} D_p}$.
		However, since $D_p$ (and hence both $\coprod_{p \in P} D_p$ and $\natural_{p \in P} D_p$) is trivial in $H^n(X)$, this flexible cobordism is in fact a subcritical Weinstein cobordism; so $X_{\natural_{p \in P} D_p}$ and $X[P^{-1}]$ are equivalent in $\weincrit$. In particular, we can assume that $X[P^{-1}]$ is constructed by carving out the \textit{connected} Lagrangian disk 
		$\natural_{p \in P} D_p$ near the co-cores of $X$. 
	\end{remark}
	\begin{remark}\label{rem: prime_factorization}
		If $n = p_1 \cdots p_k$ is a product of distinct primes (or relatively coprime integers), then the $n$-Moore space $M_n$ is homotopy equivalent to the wedge sum 
		$M_{p_1}\vee\cdots \vee M_{p_k}$ (assuming the Moore spaces are simply-connected). To see this, note that there are always  maps $M_n\rightarrow \vee_{p_i \in P} M_{p_i}$ and the coprime condition implies that this is a homology isomorphism (and hence a homotopy equivalence by Whitehead's theorem). The construction of Lagrangian disks $D_U$ in $T^*D^n$  from spaces $U$ in $S^{n-1}$ takes wedge sum to isotropic connected sum and therefore the Lagrangian disk $D_n$
		is Lagrangian isotopic to the Lagrangian disk $D_{p_1} \natural \cdots\natural D_{p_k} = \natural_{p_i} D_{p_i}$. So by the previous remark,
		$X_n$ is equivalent to $X[P^{-1}]$, where 
		$P$ is the set of \textit{primes}  $\{p_1, \cdots, p_k\}$. 
	\end{remark}

	\subsection{A homotopy invariant P-flexibilization functor}
	\label{sec: htpy_invariant_P-flex}
	
	Next, we discuss the present work's P-flexibilization functor which is manifestly homotopy invariant, and which we prove in Section \ref{sec: comparison_P_flexibilization} to be equivalent to the non-homotopy-invariant construction 
	$(\ )_L$ defined in the previous section. 
	
	To do so, we fix a regular Lagrangian disk $L \subset T^*D^n$ and  a Weinstein structure $(T^*D^n)_L$. Then taking the product with $(T^*D^n)_L$ defines a functor
	$$
	\times (T^*D^n)_L: \lioustr \rightarrow \lioustr
	$$
	that takes a strict proper inclusion $f$ to $f\times Id_{(T^*D^n)_L}$, 	
	as well as a restricted functor
	$$
	\times (T^*D^n)_L: \weinstr \rightarrow \weinstr
	$$
	since the product of Weinstein sectors is a Weinstein sector. 
The next result shows that this functor is homotopy-invariant and also subcritical invariant.
    \begin{lemma}
    \label{lemma. product is functor}
        The endofunctor $\times (T^*D^n)_L: \weinstr \rightarrow \weinstr$ descends to an endofunctor (also denoted $\times (T^*D^n)_L$) on $\weinstab$ and $\weincrit$.  That is, there are commutative diagrams
\begin{equation}
\begin{tikzcd}
   \weinstr \arrow{r}{\times (T^*D^n)_L}\arrow{d} & \weinstr \arrow{d}\\
   \weinstab \arrow{r}{\times (T^*D^n)_L} & \weinstab
\end{tikzcd}
\end{equation}
\begin{equation}
\begin{tikzcd}
   \weinstr \arrow{r}{\times (T^*D^n)_L}\arrow{d} & \weinstr \arrow{d}\\
   \weincrit \arrow{r}{\times (T^*D^n)_L} & \weincrit\\
\end{tikzcd}  
\end{equation}
where the vertical functors are the localization functors used to define $\weinstab$ and $\weincrit$. 
    \end{lemma}
    \begin{proof}
Consider the composite functors $\weinstr \overset{\times (T^*D^n)_L}{\rightarrow} \weinstr\rightarrow \weinstab$ and $\weinstr \overset{\times (T^*D^n)_L}{\rightarrow}\weinstr\rightarrow \weincrit$, where in both cases the second functor is the localization functors used to define $\weinstab$ or $\weincrit$. If $f$ is a movie inclusion or subcritical morphism, then so is $f\times Id_{(T^*D^n)_L}$, which are isomorphisms in $\weinstab$ or $\weincrit$ respectively. So by the universal properties of $\weinstab, \weincrit$, the functors $\weinstr \overset{\times (T^*D^n)_L}{\rightarrow} \weinstr\rightarrow \weinstab$ and $\weinstr \overset{\times (T^*D^n)_L}{\rightarrow}\weinstr\rightarrow \weincrit$ factor through $\weinstab$, $\weincrit$ respectively, yielding the desired commutative diagrams. 
    \end{proof}

    Furthermore, $X \times (T^*D^n)_L$ has the advantage that it can be defined without parametrizing the co-cores of $X$; in fact, the definition of $X \times (T^*D^n)_L$ makes sense when $X$ is an arbitrary Liouville sector. 
	
	If $X$ is Weinstein, or the Kunneth formula holds for $X$, and $L = D_P$, then
	\begin{eqnarray}
	Tw \ \mathcal{W}(X \times (T^*D^n[P^{-1}])) \cong Tw \ \mathcal{W}(X) \otimes Tw \ \mathcal{W}(T^*D^n[P^{-1}]) \\
	\cong Tw \ \mathcal{W}(X) \otimes Tw \ \mathbb{Z}\left[\frac{1}{P}\right] \cong Tw \ \mathcal{W}(X)\left[\frac{1}{P}\right]
	\end{eqnarray}
	so that $Tw \ \mathcal{W}(X \times (T^*D^n[P^{-1}]))$ is equivalent to $Tw \ \mathcal{W}(X[P^{-1}])$, the Fukaya category of the non-homotopy-invariant P-flexibilization $X[P^{-1}]$ defined in the previous section.
	In the next section, we show that in fact $X_L$ and $X \times (T^*D^n)_L$ are  equivalent in the critical category $\weincrit$. 	

    Finally, we observe that there exist Weinstein homotopic structures  $(T^*D^n)_L$ with different number of critical points; however, the resulting endofunctors $\times (T^*D^n)_L$ of $\weinstab$ or $\weincrit$ are all equivalent since if $((T^*D^n)_L, \lambda_0), ((T^*D^n)_L, \lambda_1)$ are Weinstein homotopic, so are $X \times ((T^*D^n)_L, \lambda_0), X \times ((T^*D^n)_L, \lambda_1)$. Hence we are free to take slightly different models for $(T^*D^n)_L$ at different stages of the proof.

	\begin{remark}\label{rem: geometric_model2}		
		More generally, it is a priori possible that the Liouville homotopy type  $(T^*D^n)_L$ depends on more than just $L$. If $L$ is strictly exact with respect to two forms $\lambda_L, \lambda_L'$ that have the same behavior at infinity (and hence are linearly homotopic), then  
		$(T^*D^n, \lambda_L)\setminus L,
		(T^*D^n, \lambda_L')\setminus L $ are Liouville homotopic by the linear homotopy, which necessarily vanishes on $L$. However, if $L$ does not have Legendrian boundary with respect to $\lambda_{T^*D^n, std}$, then we need to pick a homotopic form $\lambda_L$ (or $\lambda_L'$) for which $L$ is strictly exact. Then $\lambda_L, \lambda_L'$ are Liouville homotopic but not necessarily through a family of forms vanishing on $L$; hence the homotopy does not descend to a homotopy on $T^*D^n \setminus L$. Said differently, we need to Lagrangian isotope $L$ to make it have Legendrian boundary with respect to $\lambda_{T^*D^n, std}$ and there are different ways of doing this. 			It is better to think of $\lambda_L$ as a whole path from $\lambda_{T^*D^n, std}$ to $\lambda_{L}$ and we need a path of paths to the path from $\lambda_{T^*D^n, std}$ to $\lambda_{L}'$; at the endpoints of this path, we get a Liouville homotopy that vanishes on $L$ as desired. 	
	\end{remark}

Finally, in the construction of $T^*D^n[P^{-1}] := T^*D^n \backslash (\coprod_{p \in P} D_p)$, there was a dependence on an embedding of a $p$-Moore space into $S^{n-1}$ (and a choice of $n$). Indeed, recall that we used this embedding to form a  tubular neighborhood $U_p \subset S^{n-1}$ of the p-Moore space, construct a function $f_p: D^n \rightarrow \mathbb{R}^1$, and then form the Lagrangian disk $D_p = d(f_p)$. In the following, we show that $T^*D^n[P^{-1}]$ is independent of these choices in $\weincrit$. 
\begin{proposition}\label{prop: T^*D[P^{-1}] independent of n}
The isomorphism class of $T^*D^n[P^{-1}]$ in $\weincrit$ is independent of $n$ and the choice of embedding of the p-Moore space into $S^{n-1}$. 
\end{proposition}
\begin{proof}
First, we consider the effect of increasing $n$. We can embed a p-Moore space into $S^{n-1}$ (for $n \ge 5$) and form its tubular neighborhood $U_p^{n-1} \subset S^{n-1}$. Alternatively, we can use the equatorial embedding $S^{n-1} \subset S^n$ to embed the p-Moore space into $S^n$ and take its  neighborhood $U_p^{n} \subset S^{n}$. Then the Lagrangian disk $D_p^{n+1} \subset T^*D^{n+1}$ (obtained using $U_p^{n}$) is Lagrangian isotopic to the stabilization of the Lagrangian disk $D_p^n\subset T^*D^n$ (obtained using $U_p^{n-1} \subset S^{n-1}$), {ie }$D_p^{n+1}$ is isotopic to $D_p^n \times T_0^*D^1 \subset T^*D^{n+1}$. 
Hence, $T^*D^{n+1} \backslash D_p^{n+1}$ is Weinstein homotopic to $T^*D^{n+1} \backslash (D_p^{n} \times T_0^* D^1)$, which is equivalent to $(T^*D^n\backslash D_p^n) \times T^*D^1$ in $\weincrit$ by Proposition \ref{prop. for comparison} below (a more precise version of Theorem \ref{thm: intro_comparison}). Hence, the sectors $(T^*D^{n+1})[P^{-1}], (T^*D^{n})[P^{-1}]$ (formed using the embeddings of the p-Moore space into $S^{n-1}$ or the induced embedding into $S^n$) are equivalent in $\weincrit$. 
Next,  we note that any two embeddings of p-Moore spaces into $S^{n-1}$ become smoothly isotopic in $S^{n-1}$ if $n \ge 6$ (since all cells in a p-Moore have dimension at most 2). Such smooth  isotopies induce Lagrangian isotopies of the respective Lagrangian disks $D_p$ in $T^*D^n$ and hence induce Weinstein homotopy equivalences of the respective sectors $T^*D^n\backslash D_p$.
\end{proof}

	\section{Comparison of P-flexibilization functors}\label{sec: comparison_P_flexibilization}
	
	The goal of this section is to prove the following result comparing $X_L := X \setminus 	\coprod C_X^L$  and $X \times (T^*D^n)_L$. 
	\begin{theorem}\label{thm: comparison}
		Consider a regular Lagrangian disk $L^n \subset T^*D^n$ (equipped with a Weinstein homotopy $\lambda_{T^*D^n, L, t}$ as in Definition \ref{defn. regular disk}). 
		Then for any parametrized Weinstein sector $X^{2n}$, there is an equivalence $\phi_X: X_L \tilde{\rightarrow}
		X \times (T^*D^n)_L$ in $\weincrit$. Furthermore, for any strict proper inclusion $f: X^{2n} \hookrightarrow Y^{2n}$ in $\weinparam$, the following is a homotopy commuting  diagram in the critical Weinstein category $\weincrit$:
		\begin{equation}
		\begin{tikzcd} 
		X_L \arrow{rr}{f_L}	
		\arrow{d}{\phi_X} 
        && 
		Y_L  \arrow{d}{\phi_Y}\\
		X \times (T^*D^n)_L  \arrow{rr}{f \times Id_{(T^*D^n)_L}}	
		&& 
		\hspace{0.3cm} Y\times (T^*D^n)_L
		\end{tikzcd}
		\end{equation} 
	\end{theorem}
	\begin{remark}
    Since $\weincrit$ is an $\infty$-category, it does not have a notion of strictly commuting diagrams; instead, it only has a notion of homotopy commuting diagrams, as in Theorem \ref{thm: comparison}. Furthermore, since there are no strict composition of morphisms in an infinity category, it does not have a notion of isomorphism, only equivalence (which gives an isomorphism in the homotopy category).

    The vertical equivalences $\phi_X, \phi_Y$ in the diagrams above are, in fact, \textit{zig-zags} of subcritical morphisms in $\weinstab$, and hence are equivalences in $\weincrit$. They arise by combining the vertical equivalences in Proposition \ref{prop. for comparison}.
	\end{remark}

	\subsection{Compatibility of taking products and carving out disks}\label{sec: compatibility_prod_carving}
	In this section we start the proof of
	Theorem \ref{thm: comparison} comparing the functors $\times (T^*D^n)_L $ and $(\ )_L$. To do so, we prove some slightly more general results concerning the compatibility between taking products and carving out Lagrangian disks. 
	
	Recall that given a strict proper inclusion $f: X \hookrightarrow Y$ of Weinstein  sectors $X,Y$ with parametrized Weinstein handles, 
	there is an induced strict proper inclusion  $i_L: X_L \hookrightarrow Y_L$ (Notation~\ref{notation. i_L}). 
	Also, for any Weinstein  sector $Z$ with parametrized handles, there   are induced inclusions $f \times Id: X \times Z \hookrightarrow Y \times Z$ and 	$(f\times Id)_L: (X \times Z)_L \hookrightarrow (X \times Z)_L$. 
	
	The following proposition compares these maps. 
	
	\begin{proposition}\label{prop. for comparison}
		Let $X^{2n}, Y^{2n}, Z^{2m}$
		be Weinstein  sectors with parametrized Weinstein handles, $f: X \hookrightarrow Y$ a strict proper inclusion, and $L \subset T^*D^n, K \subset T^*D^m$ parametrized regular Lagrangian disks (equipped with Weinstein homotopies $\lambda_{T^*D^n, L, t}, \lambda_{T^*D^m, K, t}$ as in Definition \ref{defn. regular disk}). 
Then there exist equivalences $\phi_{(X,L), Z}, \phi_{(Y,L), Z}$ and 
$\phi_{X,(Z,K)}, \phi_{Y, (Z,K)}$ so that there are homotopy commuting diagrams in $\weincrit$	
		\begin{equation}\label{comm: comparison_thm_statement1}
		\begin{tikzcd} 
		X_L \times Z  \arrow{rr}{f_L \times Id}	
		\arrow{d}{\phi_{(X,L), Z}} 
        && 
		Y_L \times Z  \arrow{d}{\phi_{(Y,L), Z}}\\
		(X \times Z)_{L\times T^*_0 D^m} \arrow{rr}{(f \times Id)_{T^*_0 D^n \times K}}  && \ \  (Y \times Z)_{L\times T^*_0 D^m} 			
		\end{tikzcd}
		\end{equation}
		\begin{equation}\label{comm: comparison_thm_statement2}
		\begin{tikzcd} 
		X  \times Z_K   \arrow{rr}{f \times Id_K}	
		\arrow{d}{\phi_{X,(Z,K)}} && 
		\ \ Y \times Z_K  \arrow{d}{\phi_{Y, (Z,K)}}\\
		(X \times Z)_{T^*_0 D^n\times K} \arrow{rr}{(f \times Id)_{T^*_0 D^n\times K}}  &&  \ \ (Y \times Z)_{T^*_0 D^n\times K} 
		\end{tikzcd}
		\end{equation}
		
	\end{proposition}
	
	\begin{remark}
		If Weinstein sectors $X, Y, Z$ have Weinstein stops $H_X, H_Y, H_Z$, then 
		$X_L \times Z$ has stop $H_X \times Z \coprod X_L \times H_Z$ 
		while 
		$(X\times Z)_{L \times T^*_0 D^m}$ has stop $H_X \times Z \coprod X \times H_Z$.  So the second components 
		$X_L \times H_Z, X \times H_Z$  of these two respective stops are different. 
		Part of the content of this proposition is that these two stops differ by a loose Legendrian hypersurface coming from a subcritical Weinstein cobordism, that gives rise to the equivalences in $\weincrit$.
	\end{remark}
	
	\begin{proof}[Proof of Proposition~\ref{prop. for comparison}.]
		The proof of Proposition \ref{prop. for comparison} follows from the following Proposition 
		\ref{prop: commutative_diagram_sector_localization} combined with Proposition \ref{prop: convert_subdomain_to_proper_inclusion} and Proposition \ref{prop: convert_homotopy_equivalence} which convert subcritical subdomain inclusions and isomorphisms up to deformation into equivalences in the critical Weinstein category.
For example, the morphism $\varphi_{(X,L),Z}$ is the composition of the strict subcritical subdomain inclusion $i_{(X, L), Z}$ from the following Proposition \ref{prop: commutative_diagram_sector_localization} and a bordered Weinstein homotopy. 
	\end{proof}

	Recall that $C_X$ is a co-core of $X$ with its original Weinstein structure $\lambda_X$, $C_X^L$ is a copy of $L$ embedded in a neighorhood of $C_X$,  $\lambda_{X, L}$ is the homotopic Weinstein structure for which $C_X^L$ is a co-core, and $X_L: = (X, \lambda_{X, L}) \setminus \coprod_{C_X} C_X^L.$
	
	\begin{proposition}\label{prop: commutative_diagram_sector_localization}
		Let $X^{2n}, Y^{2n}, Z^{2m}$ be Weinstein sectors with parametrized Weinstein handles, $f: X \hookrightarrow Y$ a strict proper inclusion, and $L \subset T^*D^n, K \subset T^*D^m$ parametrized regular Lagrangian disks (equipped with Weinstein homotopies as in Definition \ref{defn. regular disk}). Then the following hold:	
		\begin{enumerate} 
			\item\label{item. claim one of carving}  There is a strict subcritical subdomain inclusions 
			$$
			i_{(X, L), Z}: X^{2n}_L \times Z^{2m} := ((X, \lambda_{X, L}) \setminus \coprod_{C_X} C^L_X) \times Z  \xrightarrow{\subset} (X, \lambda_{X, L})\times Z\setminus \coprod_{C_X, C_Z} C_X^L \times C_Z
			$$
			where $C_X, C_Z$ are the co-cores of $X, Z$ respectively. 
            
			\item\label{item. claim two of carving} The identity map $Id_{(X, L), Z}$  
			$$
			(X, \lambda_{X, L})\times Z\setminus \coprod_{C_X, C_Z} {C_X^L \times C_Z} \rightarrow (X \times Z)_{L \times T^*_0 D^m}:= (X\times Z, \lambda_{X \times Z, L\times T^*_0 D^m})\setminus \coprod_{C_{X\times Z}} C_{X\times Z}^{L\times T^*_0 D^m}
			$$ 	is an isomorphism, up to   bordered Weinstein homotopy, {ie }these two sectors are bordered Weinstein homotopic.
			
			\item\label{item. claim three of carving}  	There is a strictly commuting diagram of symplectic embeddings: 
			\begin{equation}\label{comm: comparison_prop_XLZ}
			\begin{tikzcd} 
			X_L \times Z  \arrow{r}{f_L \times Id}	
			\arrow{d}{i_{(X,L), Z}} & 
			Y_L \times Z  \arrow{d}{i_{(Y,L), Z}}\\
			(X,\lambda_{X,L}) \times Z \setminus \coprod_{C_X, C_Z} {C_X^L\times C_Z}  \arrow{d}{Id_{(X,L), Z}} \arrow{r}{f \times Id}  &  (Y,\lambda_{Y,L}) \times Z \setminus \coprod_{C_X, C_Z} {C_X^L \times C_Z}  \arrow{d}{Id_{(Y,L), Z}}\\
			(X \times Z)_{L\times T^*_0 D^m} \arrow{r}{(f \times Id)_{T^*_0 D^n \times K}}  &  (Y \times Z)_{L\times T^*_0 D^m} 			
			\end{tikzcd}
			\end{equation}
			Furthermore, the top square is a pullback diagram of sets and the bordered Weinstein homotopies for $Id_{(Y,L), Z}$ extend those for $Id_{(X,L), Z}$. 	
			There are similar maps for $Z$ and a strictly commuting diagram of exact symplectic embeddings:
			\begin{equation}\label{comm: comparison_prop_XZK}
			\begin{tikzcd} 
			X  \times Z_K   \arrow{r}{f \times Id_{Z_K}}	
			\arrow{d}{i_{X,(Z,K)}} & 
			Y \times Z_K  \arrow{d}{i_{Y, (Z,K)}}\\
			X \times (Z, \lambda_{Z, K})
			\setminus \coprod_{C_X, C_Z} {C_X \times C_Z^K}  \arrow{d}{Id_{X, (Z, K)}}\arrow{r}{f \times Id_{Z_K}}  &  Y \times  (Z, \lambda_{Z, K}) \setminus \coprod_{C_X, C_Z} C_X \times C_Z^K \arrow{d}{Id_{Y, (Z, K)}}\\			
			(X \times Z)_{T^*_0 D^n\times K} \arrow{r}{(f \times Id)_{T^*_0 D^n\times K}}  &  (Y \times Z)_{T^*_0 D^n\times K} 
			\end{tikzcd}
			\end{equation}
		\end{enumerate}
	\end{proposition}

	\begin{proof}[Proof of Proposition \ref{prop: commutative_diagram_sector_localization}]

        We begin by proving \eqref{item. claim one of carving}. Our arguments will also prove the claims in \eqref{item. claim three of carving} regarding the top square in \eqref{comm: comparison_prop_XLZ}.
    
		There is a commutative diagram of symplectic embeddings of the form 
		\begin{equation}\label{comm: X_L-> X and Y}
		\begin{tikzcd} 
		X_L   \arrow{r}{f_L}	
		\arrow{d}{i_{(X,L)}} & 
		Y_L  \arrow{d}{i_{(Y, L)}}\\
		(X, \lambda_{X, L}) \arrow{r}{f}  &  (Y, \lambda_{Y, L})
		\end{tikzcd}
		\end{equation}
		where $	(X, \lambda_{X, L}), (Y, \lambda_{Y, L})$ are Weinstein homotopic to $X, Y$ respectively as in the construction of $X_L, Y_L$; in particular, $i_{(X, L)}, i_{(Y,L)}$ are strict Weinstein subdomain inclusions. 			
		This diagram commutes since the map $f$ takes parametrized co-cores of $X$ to those of $Y$ and hence the construction of $Y_L$ extends that of $X_L$. In particular, this is a pullback diagram of sets. 
		
		Next, we can take the product of this diagram with $Z$ and the identity map $Id_Z$ to obtain another commutative diagram of symplectic embeddings: 
		\begin{equation}\label{comm: X_L x Z -> X x Z, and Y}
		\begin{tikzcd} 
		X_L  \times Z \arrow{r}{f_L \times Id_Z}	
		\arrow{d}{i_{(X,L)} \times Id_Z} & 
		Y_L \times Z \arrow{d}{i_{(Y, L)} \times Id_Z }\\
		(X, \lambda_{X,L}) \times Z \arrow{r}{f \times Id_Z}  &  (Y, \lambda_{Y, L}) \times Z
		\end{tikzcd}
		\end{equation}
		This is also a pullback diagram of sets. 
		Since $X_L \xrightarrow{\subset} (X, \lambda_{X, L})$ is a strict Weinstein subdomain inclusion, then
		$X_L \times Z \xrightarrow{\subset} (X, \lambda_{X, L}) \times Z$ is a  strict sectorial subdomain inclusion, 
		and similarly for $Y$. 
		In particular, $((X, \lambda_{X, L})  \times Z) \setminus (X_L \times Z)$ is a Weinstein cobordism.
		Let $p_X^L$ be the index $n$ critical points of the Weinstein cobordism $(X, \lambda_{X, L}) \setminus i_{(X, L)}(X_L)$, whose co-core is $C_X^L$. 	
		Then the index $n+m$ critical points of the cobordism $((X, \lambda_{X, L})  \times Z) \setminus (X_L \times Z)$ correspond to pairs consisting of an index $n$ critical point $p_X^L$ and an index $m$ critical point $p_Z$ of the Weinstein structure on $Z^{2m}$. 
		The co-cores of these critical points are the products of the co-cores of the respective critical points, {ie }$C_X^L \times C_Z$.  Hence, if we carve out the Lagrangian co-core $C_X^L \times C_Z$ from this Weinstein cobordism, we get a subcritical Weinstein cobordism. 
		This is the desired strict subcritical subdomain inclusion $i_{(X, L), Z}$, 
        proving \eqref{item. claim one of carving}.
        
        Since the co-cores $C_X^L \times C_Z$ are a subset of the co-cores $C_Y^L \times C_Z$, the commutative diagram in equation \eqref{comm: X_L x Z -> X x Z, and Y} induces a commutative diagram which is the top square in equation \eqref{comm: comparison_prop_XLZ}; this remains a pullback diagram of sets since the co-cores $C_X^L \times C_Z$ are a subset of the co-cores $C_Y^L \times C_Z$.	
        This proves the claims in \eqref{item. claim three of carving} regarding the top square in \eqref{comm: comparison_prop_XLZ}.

        Now we establish \eqref{item. claim two of carving} and the claims in \eqref{item. claim three of carving} regarding the bottom square in \eqref{comm: comparison_prop_XLZ}.
		Let us first see that 
        the symplectic manifold $((X, \lambda_{X,L}) \times Z) \setminus \coprod_{C_X, C_Z} C_X^L \times C_Z$ 
        is symplectomorphic to
		$$(X\times Z)_{L \times T^*_0 D^m}:= 
		(X \times Z, \lambda_{X\times Z,L\times T^*_0 D^m}) \setminus \coprod_{C_{X\times Z}} C_{X\times Z}^{L\times T^*_0 D^n}.
		$$
        Here,
        $\lambda_{X\times Z,L\times T^*_0 D^m}$ is a Weinstein structure on $X\times Z$ that has $C_{X\times Z}^{L \times T^*_0 D^m}$ as co-cores and  is Weinstein homotopic to $\lambda_X + \lambda_Z$ via a homotopy supported near the co-cores $C_{X\times Z}$ of $X\times Z$.
		We observe that $(X, \lambda_{X, L}) \times Z = (X \times Z , \lambda_{X, L} + \lambda_Z)$ and $(X \times Z, \lambda_{X \times Z, L\times T^*_0 D^m})$ are Weinstein homotopic structures on $X \times Z$ (as both are homotopic to $\lambda_X + \lambda_Z$). 
		Using the identification between the product $C_X \times C_Z$ of co-cores and the co-core $C_{X \times Z}$ of the product $X \times Z$ and the fact that $C_Z^{T^*_0 D^m} = C_Z$, we have that  $C_X^L \times C_Z$ is equal to $C_{X \times Z}^{L \times T^*_0 D^n}$. Hence the identity map is a symplectomorphism between these two sectors. 

        To establish \eqref{item. claim two of carving},
		we need the stronger statement that the identity map is an isomorphism up to deformation. To do this, we need to show that the  homotopic structures $\lambda_{X, L} + \lambda_Z$ and 
		$\lambda_{X \times Z, L\times T^*_0 D^m}$ are homotopic \textit{relative} to $C_X^L \times C_Z = C_{X\times Z}^{L \times T^*_0 D^m}$, {ie }through a family of forms that vanish on this Lagrangian.		
		Note that the the Weinstein structure $\lambda_{X, L} + \lambda_Z$ near neighborhoods $T^*D^n \times T^*D^m$ of the co-cores $C_{X\times Z}$ is $\lambda_{T^*D^n, L} + \lambda_{T^*D^m, std}$. 
		Let $\lambda_{T^*D^n, L, t}$ be the interior Weinstein homotopy from $\lambda_{T^*D^n, L}$ to $\lambda_{T^*D^n, std}$ and let $\lambda_{X, L, t}$ be the homotopy from $\lambda_{X, L}$ to $\lambda_X$ obtained by extending by the identity. This induces a homotopy $\lambda_{X, L, t} + \lambda_Z$ on $X \times Z$, is \textit{not} relative to $L$ since for example the form $\lambda_{T^*D^n, std}+ \lambda_{T^*D^m, std}$ does not vanish on $L \times T^*_0 D^m$. We explain how to modify it and produce a homotopy that does vanish on  $L \times T^*_0 D^m$.

		Consider the homotopy $\lambda_{T^*D^n, L, t} + \lambda_{T^*D^m}$ on $T^*D^n \times T^*D^m$ and the induced homotopy $s_t$ on the sectorial boundary $\partial(T^*D^n\times T^*D^m) = \partial T^*D^n \times T^*D^m \cup T^*D^n \times \partial T^*D^m$.
		We view $s_t$ as a homotopy of the stop for the fixed Weinstein structure $\lambda_{T^*D^n, L } + \lambda_{T^*D^m}$; so we can proceed as in the first part of Proposition \ref{prop: bordered_to_interior_homotopy_Weinstein} and insert the movie construction of $s_t$ near the sectorial boundary of $T^*D^n \times T^*D^m$. The result is a bordered Weinstein homotopy $\lambda_{T^*D^n \times T^*D^m, L \times T^*_0 D^m, t}$  on a slightly larger copy  $(T^*D^n \times T^*D^m)'$ that agrees with the original structure $\lambda_{T^*D^n, L} + \lambda_{T^*D^m, std}$ on $T^*D^n \times T^*D^m \subset (T^*D^n \times T^*D^m)'$  and   agrees with $s_t$ 
		near the sectorial boundary of $(T^*D^n \times T^*D^m)'$.
		Since 
		$\lambda_{T^*D^n \times T^*D^m, L \times T^*_0 D^m, t}$   agrees with the original structure $\lambda_{T^*D^n, L} + \lambda_{T^*D^m, std}$ on $T^*D^n \times T^*D^m$, this homotopy vanishes on $L \times T^*_0 D^n$. See the top-right square of Figure \ref{fig: movie_handle}, which is $(T^*D^n \times T^*D^m)'$ (the smaller subrectangle is  $T^*D^n \times T^*D^m 
		\subset (T^*D^n \times T^*D^m)'$). 
		
		By construction, $\lambda_{T^*D^n \times T^*D^m, L \times T^*_0 D^m, t}$  agrees with the homotopy $\lambda_{X, L, t} + \lambda_{Z}$ near the sectorial boundary of $T^*D^n \times T^*D^m$ (since $s_t$ is the restriction of  $\lambda_{X, L, t} + \lambda_{Z}$ to a neighborhood of 
		$\partial(T^*D^n \times T^*D^m)$). 
		Hence we can take a homotopy on $X \times Z$ that is $\lambda_{T^*D^n \times T^*D^m, L \times T^*_0 D^m, t}$ in $T^*D^n \times T^*D^m$ and is $\lambda_{X, L, t} + \lambda_{Z}$ elsewhere on $X \times Z$.
		This is a bordered homotopy and is relative to $L \times T^*_0 D^m$. Note that the resulting form at time 1 $\lambda_{X \times Z, L \times T^*_0 D^n}$ agrees with $\lambda_X+ \lambda_Z$ except near the co-cores $C_{X \times Z}$; so after carving out  $C_{X \times Z}^{L\times T^*_0 D^n}$, 	we call the resulting sector $(X \times Z)_{L \times T^*_0 D^n}$.  See Figure \ref{fig: movie_handle}. 
        This completes the proof of \eqref{item. claim two of carving}.
        
		Finally, we observe that since all our constructions happen locally near the co-cores of the index $n$ handles, this Weinstein homotopy on $X \times Z$ extends to a Weinstein homotopy on $Y \times Z$ and hence the bottom square in  equation \eqref{comm: comparison_prop_XLZ} also commutes.

		\begin{figure}
			\centering
			\includegraphics[scale=0.15]{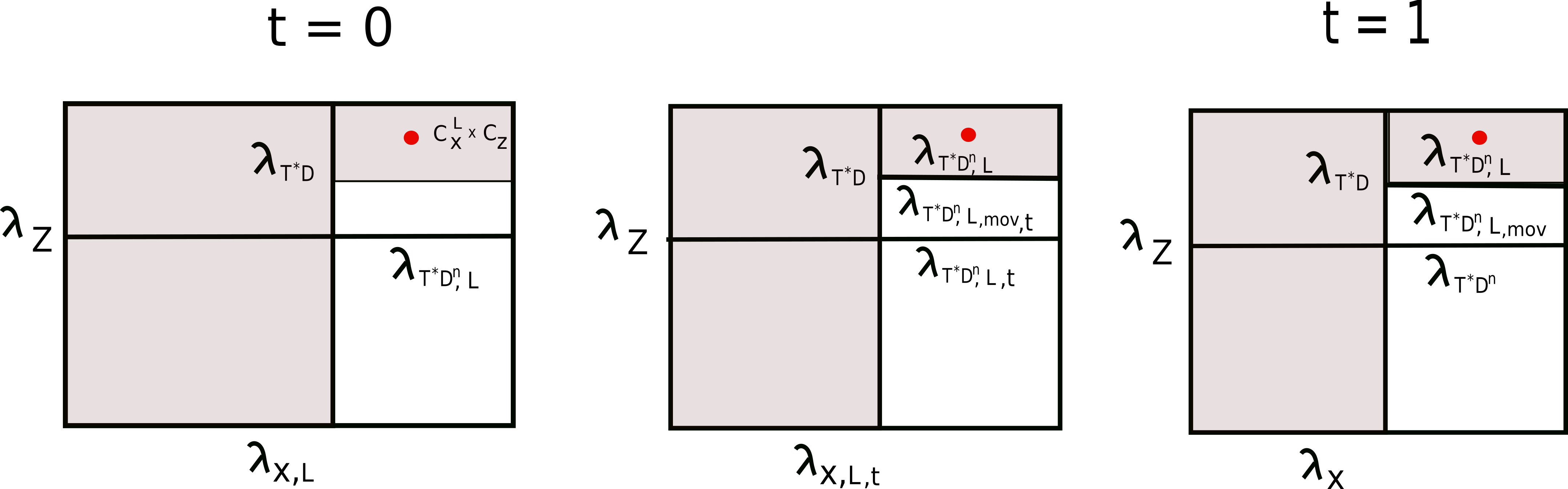}
			\caption{Weinstein homotopy from $(X \times Z, \lambda_{X, L} + \lambda_{Z}	)$ to $(X \times Z, \lambda_{X \times Z, L \times T^*_0 D^m}	)$
				via forms vanishing on $C_X^L \times C_Z$.  The grey region denotes the area where the homotopy is constant. The upper right square denotes a neighborhood $(T^*D^n \times T^*D^m)'$ of the co-core $C_X \times C_Z$, which contains a smaller subrectangle $T^*D^n \times T^*D^m$ with the co-core $C_X^L \times C_Z$.  
			}
			\label{fig: movie_handle}
		\end{figure}

        To finish the proof of \eqref{item. claim three of carving} we must construct the diagram \eqref{comm: comparison_prop_XZK} for $Z$ and $K$. This diagram is constructed similarly to \eqref{comm: comparison_prop_XLZ}, except that the homotopies occur in the $Z$-coordinates instead of the $X$-coordinates as in the previous paragraphs.  In particular, the resulting local model for 
		$(T^*D^n \times T^*D^m,\lambda_{T^*D^m\times T^*D^n,  T^*_0 D^n \times K})$ is the pullback of 	$(T^*D^m \times T^*D^n,\lambda_{T^*D^m\times T^*D^n, K \times T^*_0 D^n})$  
		via the the \textit{swap map} 
		$\phi: T^*D^m \times T^*D^n \rightarrow T^*D^n \times T^*D^m$ given by $\phi(x, y)
		= (y,x)$; this observation will be important for the proof of Theorem \ref{thm: comparison} below. 
	\end{proof}	 
	\begin{remark}
		Note that in general, the homotopy from
		$(X, \lambda_{X,L}) \times Z \setminus \coprod_{C_X, C_Z} C_X^L \times C_Z$ to 
		$$(X\times Z)_{L \times T^*_0 D^m}:= 
		(X \times Z, \lambda_{X\times Z,L\times T^*_0 D^m}) \setminus \coprod_{C_{X\times Z}} C_{X\times Z}^{L\times T^*_0 D^n}
		$$
		is a homotopy of the movie constructions. These movie constructions are induced by the  Weinstein homotopy that is the product of  the homotopy $(T^*D^n, \lambda_{T^*D^n, std})$ to $(T^*D^n, \lambda_{T^*D^n, L})$ with the constant Weinstein structure on $Z\backslash C_Z$. So if the homotopy from $(T^*D^n, \lambda_{T^*D^n, std})$ to 
		$(T^*D^n, \lambda_{T^*D^n, L})$ is a Weinstein homotopy, then so the homotopy from $(X, \lambda_{X,L}) \times Z \setminus \coprod_{C_X, C_Z} C_X^L \times C_Z$. This uses our assumption that the class of Weinstein homotopies are preserved under taking products. 
	\end{remark}
	\begin{remark}
		For the proof of this key result, it is crucial that we carve out the \textit{same} Lagrangian $L$ from all $n$-handles of $X$ when constructing $X_L$; in principle it is possible to construct a subdomain $X_{L,K}$ by carving out arbitrary Lagrangian disks $L$ and $K$ from different $n$-handles in an unrelated way. 
		However, this $X_{L,K}$ will not be equivalent to $X \times (T^*D^n)_L$ or $X\times (T^*D^n)_K$. In fact, such a `mixed' construction will not be homotopy invariant, even in $\weincrit$. 
		In the proof of Proposition \ref{prop: commutative_diagram_sector_localization}, this appears in the statement that 
		we carve out $\coprod_{C_X, C_Z} C_{X}^L \times C_Z$ and this is the same as $\coprod_{C_{X \times Z}} C_{X \times Z}^{L \times T^*_0 D^m}$. 
	\end{remark}
	
	\begin{examples}\label{ex: stabiilize T^*Dn_L}
		Setting $Z = T^*D^k$ in part 1) of 	Proposition \ref{prop: commutative_diagram_sector_localization} shows that there is a strict subcritical cobordism from 
		$X_L \times T^*D^k$ to $(X, \lambda_{X, L}) \times T^*D^k \setminus \coprod_{C_{X}} {C^L_X \times T^*_0 D^k}$
		and an isomorphism up to deformation of the latter to $(X\times T^*D^k)_{L\times T^*_0 D^k}$. 	In particular, to prove results about $X_L$ and maps between such sectors in $\weincrit$, it suffices to prove results about $(X\times T^*D^k)_{L \times T^*_0 D^k}$ for any $k$ and maps between such sectors. We also observe that the subcritical cobordism from 
		$X_L \times T^*D^k$ to $(X, \lambda_{X, L}) \times T^*D^k \setminus \coprod_{C_{X}} {C^L_X \times T^*_0 D^k}$ is topologically non-trivial since the stop of $X_L \times T^*D^k$ is $X_L \times T^*S^{k-1}$	while the stop of 
		$(X\times T^*D^k)_{L \times T^*_0 D^k}$ is $X \times T^*S^{k-1}$, {ie }the stop of $X\times T^*D^k$ itself.
        
An illuminating example is the case $X = T^*S^n$ with the standard Weinstein structure induced by the Morse function on the zero-section $S^n$ with a single index $0$ critical point and a single index $n$ critical point; take $L$ to be a cotangent fiber $T^*_0 D^n \subset T^*D^n$. Then $(T^*S^n)_{T^*_0 D^n}$ is the result of carving out the single co-core of $T^*S^n$, {ie }removing the single index $n$ Weinstein handle; hence $(T^*S^n)_{T^*_0 D^n}$ is just $B^{2n}_{std}$, the standard Weinstein ball with a single index $0$ critical point. Therefore, $(T^*S^n)_{T^*_0 D^n} \times T^*D^n$ is equivalent to $B^{2n}_{std} \times T^*D^n$. If we take $T^*D^n$ to have the Weinstein structure induced by a vector field on the zero-section $D^n$ that is inward pointing towards the origin, {ie }the left diagram of Figure \ref{fig: Weinstein_TDn}, then $T^*D^n$ has a single index $n$ handle and $(T^*D^n)_{T^*D^n_0}$  is a subcritical sector of the form $T^*S^{n-1}\times T^*D^1_+$, where $T^*D^1_+$ has  subcritical Weinstein structure induced by the vector field on $D^1 = [0,1]$ with exactly one zero of index $0$ at $0$ and is rightward-pointing otherwise. So $T^*S^n \times (T^*D^n)_{T^*D^n_0}$ is equivalent to $T^*S^n \times T^*S^{n-1} \times T^*D^1_+$. Finally, $T^*S^n \times T^*D^n = T^*(S^n \times D^n)$ also has a single critical point of index $2n$ and hence $T^*(S^n \times D^n)_{T^*_0 D^{2n}} = T^*(S^n \times D^n \backslash D^{2n})_+$. Here $T^*(S^n \times D^n \backslash D^{2n})_+$ has Weinstein structure induced by the vector field on $(S^n \times D^n) \backslash D^{2n}$ that vanishes on the boundary of $S^n \times D^n$ and points outward on $\partial D^n$. So all three sectors are subcritical but none are diffeomorphic, even if we forget the sectorial boundary.
	\end{examples}
	
	\subsection{Swapping Lagrangian disks}
	\label{sec: swap}
	
	Note that if we take $L = K$ and $Z = T^*D^n$ in Proposition \ref{prop: commutative_diagram_sector_localization}, the map in top row of equation \eqref{comm: comparison_thm_statement1} is the top row in Theorem \ref{thm: comparison} and the top row of 
	equation \eqref{comm: comparison_thm_statement2} is the bottom row in Theorem \ref{thm: comparison}. By Proposition \ref{prop: commutative_diagram_sector_localization}, the top row of equation \eqref{comm: comparison_thm_statement1}, \eqref{comm: comparison_thm_statement2} is related to the bottom row of these equations by equivalences in $\weincrit$. Hence it suffices to prove that the bottom rows of equation \eqref{comm: comparison_thm_statement1}, \eqref{comm: comparison_thm_statement2} 
	\begin{equation}
	\begin{tikzcd} 
	(X \times T^*D^n)_{L \times T^*_0 D^n} \arrow{r}{(f \times Id)_{L \times T^*_0 D^n}}  &  \ \ \ (Y \times T^*D^n)_{L \times T^*_0 D^n} \\
	(X \times T^*D^n)_{T^*_0 D^n\times L} \arrow{r}{(f \times Id)_{T^*_0 D^n\times L}}  &  \ \ \ (Y \times T^*D^n)_{T^*_0 D^n\times L} 
	\end{tikzcd} 
	\end{equation}
	are equivalent in $\weincrit$. The data used to define these first, second maps is just the Lagrangian embeddings $L \times T^*_0 D^n \subset T^*D^n \times T^*D^n $, $T^*_0 D^n \times L \subset T^*D^n\times T^*D^n$ respectively. 	 Hence it suffices to show that two Lagrangian disks are isotopic in $T^*D^n \times T^*D^n$. In the following, we prove this if $n$ is even; a different version of this result via Lagrangian cobordisms appeared in the third author's previous work~\cite{Tanaka_generation}. 
	
	For any symplectic manifold $M$, there is a \textit{swap} symplectomorphism $S: M\times M \rightarrow M\times M$ given by $S(x,y) = (y, x)$. We will need to use the following proposition for the swap symplectomorphism when $M = T^*D^n$. 
	
	\begin{proposition}\label{prop: swap_isotopic_Id}
		If $n$ is even, then $S: (T^*D^n \times T^*D^n, \lambda_{T^*D^n, std} +\lambda_{T^*D^n, std}) \rightarrow (T^*D^n \times T^*D^n, \lambda_{T^*D^n, std} +\lambda_{T^*D^n, std})$ is isotopic to the identity through strict sectorial isomorphism. Furthermore, there is a  strict sectorial isomorphism $\phi: T^*D^n \times T^*D^n \rightarrow T^*D^n \times T^*D^n$ which is the identity map near the sectorial boundary and agrees with $S$ in a smaller copy of $T^*D^n\times T^*D^n$ in the interior of $T^*D^n \times T^*D^n$.		 	
	\end{proposition}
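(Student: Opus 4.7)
My plan is to realize $S$ as the canonical cotangent lift of the smooth swap diffeomorphism $\sigma : D^n \times D^n \to D^n \times D^n$, $\sigma(x,y) = (y,x)$, and then to build the required isotopy by cutting off a path in $SO(2n)$. Any diffeomorphism of $D^n \times D^n$ that is the identity near the boundary lifts canonically to a strict symplectomorphism of $T^*D^n \times T^*D^n = T^*(D^n \times D^n)$ equal to the identity near the sectorial boundary, hence to a strict sectorial isomorphism; so it is enough to work at the level of the base.

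The use of the parity hypothesis is that $\det \sigma = (-1)^n = 1$, so $\sigma \in SO(2n)$. Since $SO(2n)$ is connected I fix a smooth path $R_t \in SO(2n)$ with $R_0 = \id$ and $R_1 = \sigma$, with infinitesimal generator $A_t := \dot R_t R_t^{-1} \in \mathfrak{so}(2n)$; the linear vector field $A_t x$ on $\R^{2n}$ is tangent to every sphere about the origin. I choose radii $\sqrt{2}/2 < \rho < R < 1$ and a bump $\chi$ that is identically $1$ on $[0,\rho]$ and vanishes on $[R,\infty)$, and let $\psi_t$ be the time-$t$ flow of the cutoff field $\chi(|x|) A_t x$. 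Since $\chi$ depends only on $|x|$ and $A_t$ is skew, the cutoff field is still tangent to spheres, so the flow preserves norms: $\psi_t$ is the identity outside $B_R$, stays inside $B_R$, and coincides with $R_t$ on $B_\rho$. The inclusions $B_1 \subset D^n \times D^n$ and $\tfrac12 D^n \times \tfrac12 D^n \subset B_{\sqrt{2}/2}$, both valid uniformly in $n$ because the circumradius of $\tfrac12 D^n \times \tfrac12 D^n$ equals $\sqrt{2}/2$, then guarantee that $\psi_t$ extends by the identity to a diffeomorphism of $D^n \times D^n$ fixing a neighborhood of the boundary, with $\psi_1 = \sigma$ on the half-cube.

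Setting $\phi_t := T^*\psi_t$ and $\phi := \phi_1$ yields the cotangent-lifted path of strict sectorial isomorphisms with $\phi_0 = \id$, with $\phi$ equal to the identity near the sectorial boundary, and with $\phi$ agreeing with $T^*\sigma = S$ on $T^*D^n_{1/2} \times T^*D^n_{1/2} \subset T^*B_{\sqrt{2}/2}$. This proves the second assertion on the nose, and the first assertion is obtained by reading the same path $\{\phi_t\}$ through the sectorial inclusion of the smaller copy into $T^*D^n \times T^*D^n$. The main obstacle I anticipate is precisely this last identification: a literal globally defined strict isotopy from $\id$ to $S$ cannot be built by directly lifting the $SO(2n)$-path, because for intermediate $t$ the rotation $R_t$ does not preserve the product domain $D^n \times D^n$ (it preserves only the ambient ball). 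The correct viewpoint is that the proposition's first claim is realized by the restricted action of $\{\phi_t\}$ on the smaller copy, which is enough for the intended application in Section~\ref{sec: swap}, where one Lagrangian-isotopes $L \times T_0^*D^n$ to $T_0^*D^n \times L$ inside $T^*D^n_{1/2} \times T^*D^n_{1/2}$.
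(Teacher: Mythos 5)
Your construction of the cut-off map $\phi$ follows the same route as the paper: realize $S$ as the cotangent lift $T^*s$ of the base swap $s(x,y)=(y,x)$, build a diffeotopy of $D^n\times D^n$ that is the identity near the boundary and equals $s$ on an interior region, and lift it. Your version is in fact more explicit than the paper's (which only says to ``interpolate via $s_t$'' between the two regions): cutting off the skew generator $A_t$ by a radial bump keeps the field tangent to spheres, so the flow genuinely is the identity outside $B_R$ and exactly $R_t$ on $B_\rho$, and the containments $B_1\subset D^n\times D^n$ and $\tfrac12 D^n\times\tfrac12 D^n\subset B_{\sqrt2/2}$ are checked correctly. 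The second assertion of the proposition is therefore fully established by your argument.

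The first assertion is not. You correctly diagnose that the intermediate rotations $R_t$ fail to preserve the bidisk, but your proposed remedy --- reading the claim as a statement about the restricted path $\{\phi_t\}$ on a smaller copy --- proves a weaker statement than the one asserted, and the global version is actually invoked downstream: Corollary~\ref{cor: Lagrangian_disk_isotopy} applies a global isotopy of sectorial symplectomorphisms from $\id$ to $S$ to arbitrary (in particular properly embedded) Lagrangians, which need not lie in any interior compact region where $\phi=S$. The paper's proof of the first claim does not lift the linear path; it lifts an isotopy of $s$ to $\id$ \emph{through diffeomorphisms of $D^n\times D^n$} (with corners rounded), which exists because $s$ is orientation preserving for $n$ even. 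You can manufacture such a diffeotopy from your own ingredients: let $N$ be a swap-invariant, positively homogeneous gauge with $\{N\le 1\}=D^n\times D^n$, chosen to agree with the Euclidean norm near the origin, and conjugate $R_t$ by the radial rescaling $v\mapsto N(v)\,v/|v|$ carrying the bidisk onto the round ball. Since $\sigma$ is linear and $N$ is $\sigma$-invariant, the conjugation fixes the endpoints $\id$ and $\sigma$ while turning each $R_t$ into an honest diffeomorphism of $D^n\times D^n$; its cotangent lift is then the required strict sectorial isotopy from $\id$ to $S$. (Alternatively one may appeal to the connectivity of the orientation-preserving component of $\mathrm{Diff}(D^{2n})$.) Without some such step, the first assertion remains unproven in your write-up.
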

	\begin{proof}   
		There is a symplectomorphism $\phi: T^*D^n\times T^*D^n \rightarrow T^*(D^n\times D^n)$ given by the pullbacks of the projection maps $D^n \times D^n \rightarrow D^n$. 	
		We first observe that there is swap map $s: D^n \times D^n \rightarrow D^n \times D^n$ similarly given by $s(x_1, x_2) = (x_2, x_1)$ on the zero-section. 	This is a diffeomorphism and hence there is an induced map 
		$T^*s: T^*(D^n \times D^n) \rightarrow T^*(D^n \times D^n)$ given by 
		$T^*s(x_1, x_2, p_1 dx_1 + p_2 dx_2) = (s^{-1}(x_1, x_2), s^*(p_1 dx_1 + p_2 dx_2) = (x_2, x_1, p_1 dx_2 + p_2 dx_1) = (x_2, x_1, p_2dx_1 + p_1 dx_2)$. In particular, $T^*s$ is the swap map $S: T^*D^n \times T^*D^n \rightarrow T^*D^n \times T^*D^n$.

		Now we note that since $n$ is even, $s$ is an orientation-preserving linear map and hence is isotopic to the identity through diffeomorphisms of $D^n \times D^n$ (viewed as a manifold with boundary after smoothing the corners). Let $s_t$ be this diffeotopy between $Id$ and $s$. Then $T^*s_t$ is an isotopy of sectorial symplectomorphisms of $T^*D^n \times T^*D^n$ between the identity and the swap symplectomorphism as desired. Furthermore, $T^*s_t$ preserves the standard Liouville form on $T^*D^n \times T^*D^n$ since it is induced by a diffeotopy $s_t$ of the zero-section. Finally, we take $\phi$ to be $T^*s_{movie}$, where $s_{movie}$ is a diffeomorphism of $D^n \times D^n$ which is the identity near the boundary, $s_1$ in a smaller copy of $D^n \times D^n$ in the interior, and interpolates via $s_t$ between these two regions. 
	\end{proof}
	We will call $\phi$ the \textit{cut-off} swap map since it is the identity map near the sectorial boundary of $T^*D^n \times T^*D^n$. 	
	
	\begin{corollary}\label{cor: Lagrangian_disk_isotopy}
		If $i_L: L \hookrightarrow T^*D^n, i_K: K \hookrightarrow T^*D^n$ are Lagrangian embeddings and $n$ is even, then $(i_L, i_K): L\times K \rightarrow T^*D^n \times T^*D^n$ is isotopic to $S\circ (i_L, i_K): L \times K \rightarrow T^*D^n \times T^*D^n$. 
	\end{corollary}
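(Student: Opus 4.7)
The plan is to read the corollary off directly from Proposition~\ref{prop: swap_isotopic_Id}. That proposition provides a one-parameter family $T^*s_t$ of strict sectorial isomorphisms of $T^*D^n \times T^*D^n$ with $T^*s_0 = \mathrm{Id}$ and $T^*s_1 = S$, obtained by cotangent lift from a diffeotopy $s_t$ of $D^n \times D^n$. Setting
\[
\Phi_t := T^*s_t \circ (i_L, i_K) \colon L \times K \longrightarrow T^*D^n \times T^*D^n,
\]
one gets a smooth family of embeddings with $\Phi_0 = (i_L, i_K)$ and $\Phi_1 = S \circ (i_L, i_K)$, which is the isotopy asked for by the corollary.

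To see that $\{\Phi_t\}$ is a \emph{Lagrangian} isotopy and not merely a smooth one, observe that $(i_L, i_K)$ is Lagrangian as a product of two Lagrangian embeddings into the factors, and each $T^*s_t$ is a symplectomorphism; composing a Lagrangian embedding with a symplectomorphism yields a Lagrangian, so every $\Phi_t$ is Lagrangian. Moreover, since each $T^*s_t$ strictly preserves the standard Liouville form, one can propagate Liouville primitives along $\{\Phi_t\}$ when later arguments (such as those invoking Lemma~\ref{lemma: Lagrangian_isotopy}) require a strongly exact isotopy.

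I do not anticipate any real obstacle: the content is entirely packaged inside Proposition~\ref{prop: swap_isotopic_Id}, and the role of the hypothesis $n$ even is precisely the one that appears there, namely that the linear swap of $D^n \times D^n$ is orientation-preserving and hence smoothly isotopic to the identity. If the application in Section~\ref{sec: swap} requires the isotopy to fix the sectorial boundary (in order to descend to a sectorial equivalence of the complements that is identity at infinity), one should instead compose with the cut-off swap $\phi$ from the second half of Proposition~\ref{prop: swap_isotopic_Id}; this does not change the argument above, since $\phi$ is still a strict sectorial symplectomorphism connected to the identity through such symplectomorphisms.
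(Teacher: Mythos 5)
Your proof is correct and is essentially identical to the paper's: both deduce the corollary by composing $(i_L,i_K)$ with the family $T^*s_t$ of strict sectorial symplectomorphisms from Proposition~\ref{prop: swap_isotopic_Id} and noting that symplectomorphisms carry Lagrangian embeddings to Lagrangian embeddings. The extra remarks about exactness and the cut-off swap are sensible but not needed for the statement itself.
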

	\begin{proof}
		By Proposition \ref{prop: swap_isotopic_Id}, there is an isotopy $T^*s_t$ of sectorial symplectomorphisms of $T^*D^n \times T^*D^n$ from $Id$ to $S$. Hence $T^*s_t \circ (i_L, i_K): L\times K \hookrightarrow T^*D^n\times T^*D^n$ is an isotopy of Lagrangian embeddings between $(i_L, i_K)$ and $S \circ (i_L, i_K)$. The key point is that a sectorial symplectomorphism (or more generally, proper inclusion of sectors) takes Lagrangians to Lagrangians. 
	\end{proof}

	\subsection{Proof of Theorem \ref{thm: comparison}}
	
	In this section, we complete the proof of the comparison result Theorem \ref{thm: comparison}. We first show that the cut-off swap map defines strict isomorphisms between the relevant sectors. 
	\begin{proposition}\label{prop: swap_localization}
		If $n = \frac{1}{2}\dim X = \frac{1}{2}\dim Z$ is even, then there is a strictly commuting diagram where the vertical maps are strict isomorphisms: 
		\begin{equation}\label{comm: swapping}
		\begin{tikzcd} 
		(X \times Z)_{L \times T^*_0 D^n} \arrow{r}{(f\times Id_Z)_{L\times T^*_0 D^n}}	
		\arrow{d}{
			\phi_{X, Z,L, T^*_0 D^n }} & 
		(Y \times Z)_{L \times T^*_0 D^n} \arrow{d}{\phi_{Y, Z,L, T^*_0 D^n }}\\
		(X \times Z)_{T^*_0 D^n \times L} \arrow{r}{(f\times Id_Z)_{T^*_0 D^n \times L}}  &  (Y \times Z)_{T^*_0 D^n \times L}
		\end{tikzcd}
		\end{equation}
		
	\end{proposition}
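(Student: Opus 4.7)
The plan is to construct each vertical map by applying the cut-off swap map of Proposition~\ref{prop: swap_isotopic_Id} locally in a neighborhood of each index-$n$ co-core of $X \times Z$ (respectively $Y \times Z$), and extending by the identity elsewhere. Recall that each such co-core has the form $C_X \times C_Z$ and comes with a parametrized neighborhood modeled on $T^*D^n \times T^*D^n$. The cut-off swap map $\phi$ from Proposition~\ref{prop: swap_isotopic_Id} is a strict Liouville automorphism of $T^*D^n \times T^*D^n$ that agrees with the identity near the sectorial boundary and with the honest coordinate swap on a smaller inner copy. Gluing $\phi$ in each local chart to the identity on the complement produces a well-defined global strict Liouville automorphism $\phi_{X,Z,L,T^*_0D^n}$ of $X \times Z$, and similarly for $Y \times Z$.

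The next step is to check that this automorphism intertwines the two sectorial structures under consideration. Set-theoretically this is clear: inside each local chart $T^*D^n \times T^*D^n$, the inner-swap portion of $\phi$ exchanges the disks $L \times T^*_0 D^n$ and $T^*_0 D^n \times L$, so carving $L \times T^*_0 D^n$ out of the left-hand sector corresponds under $\phi$ to carving $T^*_0 D^n \times L$ out of the right-hand sector. At the level of Liouville forms, I will invoke the parenthetical observation at the very end of the proof of Proposition~\ref{prop: commutative_diagram_sector_localization}: the local Liouville form $\lambda_{T^*D^n \times T^*D^n, T^*_0 D^n \times L}$ used in defining $(X \times Z)_{T^*_0 D^n \times L}$ is precisely the pullback of $\lambda_{T^*D^n \times T^*D^n, L \times T^*_0 D^n}$ by the swap. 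Outside the local charts both forms coincide with $\lambda_X + \lambda_Z$ and $\phi$ is the identity, so $\phi_{X,Z,L,T^*_0D^n}^* \lambda_{X \times Z, T^*_0 D^n \times L} = \lambda_{X \times Z, L \times T^*_0 D^n}$ globally, proving it is a strict Liouville isomorphism between the two sectors.

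For commutativity of the square, the key point is that $f \colon X \hookrightarrow Y$ is a strict proper inclusion in $\weinparam$, so it carries each parametrized co-core of $X$ to a parametrized co-core of $Y$. Consequently $f \times \id_Z$ sends each local chart $T^*D^n \times T^*D^n$ around a co-core of $X \times Z$ isomorphically onto a local chart around the corresponding co-core of $Y \times Z$, and is the identity on the $T^*D^n \times T^*D^n$ coordinates there. Since the swap map is defined by the same local formula in both cases, we obtain the strict equality
\[
(f \times \id_Z) \circ \phi_{X,Z,L,T^*_0D^n} = \phi_{Y,Z,L,T^*_0D^n} \circ (f \times \id_Z),
\]
which is exactly the commutativity of~\eqref{comm: swapping}.

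The only place the hypothesis that $n$ is even enters is to guarantee that the coordinate swap on $D^n \times D^n$ is orientation preserving, hence isotopic to the identity through diffeomorphisms, hence lifts via $T^*$ to a Liouville isotopy from $\id$ to $S$ that can be cut off near the sectorial boundary; this is precisely the content of Proposition~\ref{prop: swap_isotopic_Id}, and is the main (indeed only) obstacle one has to confront. Once that input is in hand, the verification above is essentially formal, reducing entirely to the local model already analyzed in the proof of Proposition~\ref{prop: commutative_diagram_sector_localization}.
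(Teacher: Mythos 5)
Your proposal is correct and follows essentially the same route as the paper's proof: build the global map by gluing the cut-off swap of Proposition~\ref{prop: swap_isotopic_Id} into each parametrized co-core neighborhood, use the local observation from the proof of Proposition~\ref{prop: commutative_diagram_sector_localization} that the two local Liouville forms are related by pullback along the swap to get strictness, and use that $f$ preserves parametrized co-cores to get strict commutativity of the square.
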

	\begin{proof}
		Recall that the cut-off swap map $\phi: (T^*D^n \times T^*D^n, \lambda_{T^*D^n \times T^*D^n, std})   \rightarrow (T^*D^n \times T^*D^n, \lambda_{T^*D^n \times T^*D^n, std})$ from Proposition \ref{prop: swap_isotopic_Id}
		is a strict Liouville isomorphism that agrees with the swap map $S$ in the interior and is the identity near the sectorial boundary. 
		In particular, $\phi(L \times T^*_0 D^n) = T^*_0 D^n \times L$. Hence we can take $\phi_{X, Z, L, T^*_0 D^n}: X \times Z \rightarrow X \times Z$ to be $\phi$ near all the co-cores $C_{X \times Z}$ of $X \times Z$
		and the identity elsewhere. Then $\phi_{X, Z, L, T^*_0 D^n}$ induces a symplectomorphism 
		$\phi_{X, Z, L, T^*_0 D^n}: (X\times Z)_{L \times T^*_0 D^n} \rightarrow (X \times Z)_{T^*_0 D^n \times L}$.

		Next, we observe that this map is actually a strict Liouville isomorphism. This is because by  construction, the swap map is a strictly isomorphism between $(T^*D^n\times T^*D^n, \lambda_{T^*D^n \times T^*D^n, L \times T^*_0 D^n})$ and $(T^*D^n\times T^*D^n, \lambda_{T^*D^n \times T^*D^n,  T^*_0 D^n \times L})$; see the last paragraph of the proof of Proposition \ref{prop: commutative_diagram_sector_localization}. 
		Furthermore, these forms are standard near their sectorial boundaries and so on a trivial enlargement of these sectors, the cut-off swap map $\phi$ is also a strict isomorphism. 
		It also induces a strict isomorphism once we carve out the disks $L \times T^*_0 D^n$ and $\phi(L \times T^*_0 D^n) = T^*_0 D^n \times L$. So the map $\phi_{X, Z, L, T^*_0 D^n}$, which extends $\phi$ by the identity, is also a strict isomorphism. 
		Since $\phi_{X,Z, L, T^*_0 D^n}$ is defined near the co-cores of the handles of $X \times Z$, it extends to a similar map on $Y \times Z$ and get the desired commutative diagram.	
	\end{proof}

	\begin{proof}[Proof of Theorem \ref{thm: comparison}]
		
		If $\dim L = \frac{1}{2} \dim X$ is even, then Theorem \ref{thm: comparison} follows from Proposition \ref{prop: commutative_diagram_sector_localization} and Proposition \ref{prop: swap_localization} applied to $Z = (T^*D^n, \lambda_{T^*D^n, std})$, which has 
		a single isolated critical point of index $n$ so that $(T^*D^n)_L$ is obtained from $T^*D^n$ by removing a single copy of $L$; we also need to use  Propositions \ref{prop: convert_homotopy_equivalence}, \ref{prop: convert_subdomain_to_proper_inclusion} converting isomorphisms up to deformation and subcritical subdomain inclusions into equivalences in $\weincrit$.
		
		If $\dim L= \frac{1}{2}\dim X$ is odd, we first note that 
		$$
		f_L: X_L \rightarrow Y_L
		$$
		is equivalent in $\weincrit$ to 
		$$
		(f\times Id_{T^*D^1})_{L \times T^*_0 D^1}: (X \times T^*D^1)_{L \times T^*_0 D^1} \rightarrow 
		(Y\times T^*D^1)_{L \times T^*_0 D^1}
		$$
		by part 1) of Proposition \ref{prop: commutative_diagram_sector_localization} and Proposition \ref{prop: convert_subdomain_to_proper_inclusion} transforming subcritical subdomain inclusions to equivalences in $\weincrit$. 
		Then $L \times T^*_0 D^1$ has even dimension and we proceed as before to show that $(f\times Id)_{L \times T^*_0 D^1}$ is equivalent to 
		$$
		(f \times Id_{T^*D^1}) \times Id_{(T^*D^{n+1})_{L \times T^*D^1_0}}: (X \times T^*D^1) \times (T^*D^{n+1})_{L \times T^*_0 D^1} \rightarrow 
		(Y\times T^*D^1)\times (T^*D^{n+1})_{L \times T^*_0 D^1}
		$$
		in $\weincrit$.
		Since there is a subcritical cobordism from $(T^*D^n)_L \times T^*D^1$ to 
		$(T^*D^{n+1})_{L \times T^*_0 D^{n+1}}$ again by Part 1) of Proposition \ref{prop: commutative_diagram_sector_localization}, this proper inclusion is equivalent to 	
		$$
		f \times Id_{T^*D^1} \times Id_{(T^*D^n)_L} \times Id_{T^*D^1}: (X \times T^*D^1) \times T^*D^{n}_{L} \times T^*D^1 \rightarrow 
		(Y\times T^*D^1)\times T^*D^{n}_{L} \times T^*D^1
		$$
		Finally, we observe that this morphism is conjugate to the morphism 
		$$
		f \times Id_{ (T^*D^n)_L} \times Id_{T^*D^1}\times Id_{T^*D^1}: X \times (T^*D^n)_L \times T^*D^1 \times T^*D^1 \rightarrow Y \times (T^*D^n)_L \times T^*D^1 \times T^*D^1
		$$ 
		via the swap maps
		$X \times T^*D^1 \times (T^*D^n)_L \times T^*D^1 \rightarrow X \times (T^*D^n)_L \times T^*D^1 \times T^*D^1$. The latter is the stabilization of $f \times (T^*D^n)_L$ as desired. 	
	\end{proof}
	
	Finally, we note that the above proof of Theorem \ref{thm: comparison} proves a slightly stronger result than in the statement of that theorem. Namely, the only non-strict maps that appear in the proof are isomorphisms up to Weinstein homotopy ({ie }the non-strict map in Proposition \ref{prop: commutative_diagram_sector_localization} is the identity map, a diffeomorphism).
	\begin{corollary}\label{cor: X_L, Xx TD_L,iso}
		$X_L$ and $X\times (T^*D^n)_L$ are isomorphic up to Weinstein homotopy, stabilization, and subcritical cobordism. 
	\end{corollary}
	Using the fact that $\times (T^*D^n)_L$ preserves isomorphisms up to Weinstein (Liouville) homotopy, we have the following corollary, which implies Corollary \ref{cor: intro_X_P_properties} from the Introduction. 
	\begin{corollary}\label{cor: geometric_invariance}
		If $X, X'$ are isomorphic up to Weinstein (Liouville) homotopy, then $X_L$ and $X_L'$ are isomorphic up to Weinstein (Liouville) homotopy,  stabilization, and subcritical cobordism. In particular, this holds for flexibilization. 
	\end{corollary}

	\section{Idempotency of P-flexibilization}\label{sec: idempotency}
	
	In this section, we prove that  $\times (T^*D^n)_L$ (and also $(\ )_L$) is a idempotent functor of $\weincrit$, {ie }there is a natural transformation $\eta: Id \rightarrow \times (T^*D^n)_L$ so that $\eta_X \times Id_{(T^*D^n)_L}, \eta_{X\times (T^*D^n)_L}$ are equivalences in $\weincrit$ for all Weinstein sectors $X$. 
	
	\subsection{\texorpdfstring{A natural transformation for $\times (T^*D^n)_L$}{A natural transformation for product with a carved disk}}\label{sec: first_natural_transformation}

	First, we define a natural transformation $\eta: Id \rightarrow \times (T^*D^n)_L$, using a slightly modified model for  $(T^*D^n)_L$. 
	Let ${D^n}' = [0,2] \times D^{n-1}$ be a larger $D^n = [0,1]^n$.  Then we take a Morse-Bott function $f'$ on ${D^n}{'}$, homotopic relative to the boundary to the standard one $f$, so that there is an embedding $\phi_1 \coprod \phi_2: (D^n, f) \coprod (D^n, f) \hookrightarrow ({D^n}{'}, f')$ with
	$\phi_1(D^n) = [0, 1] \times D^{n-1},  \phi_2(D^n) = [1, 2] \times D^{n-1}$ so that $f'$ pulls back to $f$. See Figure \ref{fig: Modified_TDn}.
	Then the function $f'$ induces a Morse-Bott Weinstein structure $({T^*D^n}', \lambda_{T^*{D^n}{'}})$ on $T^*{D^n}{'}$.  It has two co-cores which are cotangent fibers over two different points in $D^n$, say at $x_1, x_2$.
	
	\begin{figure}
		\centering
		\includegraphics[scale=0.15]{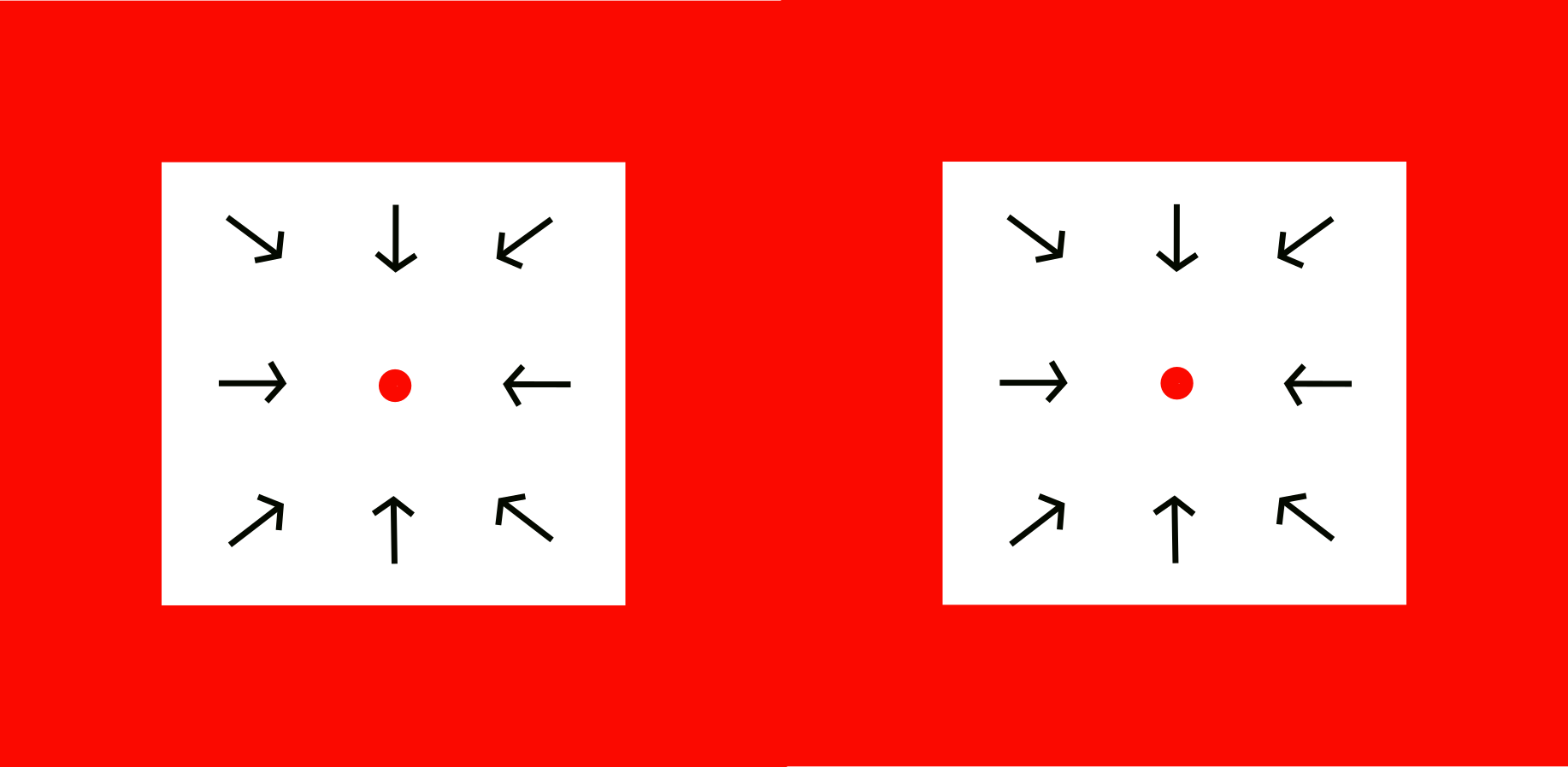}
		\caption{Modified Morse-Bott Weinstein structure $({T^*D^n}', \lambda'_{T^*D^n, std})$
			with zero locus of Liouville vector field in red. 		
		}
		\label{fig: Modified_TDn}
	\end{figure}
    The codimension zero embeddings $\phi_1, \phi_2$ induce (covariant) strict proper
	inclusions 
	$$
	T^*\phi_1, T^*\phi_2: (T^*D^n, \lambda_{T^*D^n, std}) \hookrightarrow ({T^*D^n}', \lambda_{{T^*D^n}'})
	$$  
	so that $T^*\phi_i(T^*_0 D^n) = T^*_{x_i} {D^n}'$ for $i = 1,2$. 
	Then as in the previous sections, there is an interior Weinstein homotopy of $(T^*D^n, \lambda_{std})$ to $({T^*D^n}, \lambda_{T^*D^n, L})$ that has $L$ as a co-core. 
	We can apply this homotopy to the subset $ T^*\phi_2(T^*D^n)\subset T^*{D^n}{'}$ and denote this sector $(T^*{D^n}{'}, \lambda_{{T^*D^n}{'}, T^*\phi_2(L)} )$; it has 
	$T^*\phi_1(T^*_0 D^n) = T^*_{x_1} {D^n}'$ and $T^*\phi_2(L) = (T^*_{x_2} {D^n}')^L$ as co-cores (and possibly some other co-cores). 
	Since this homotopy occurs in the complement of $T^*\phi_1(T^*D^n)$,  there is still a strict proper inclusion
	$$
	T^*\phi_1: (T^*D^n, \lambda_{std}) \hookrightarrow ({T^*D^n}{'}, \lambda_{{T^*D^n}^{'}, T^*\phi_2(L)})
	$$
	Since $T^*\phi_2(L)$ is in the complement of the image of $T^*\phi_1$, there is an induced proper inclusion 
	$$
	\eta_{T^*D^n}:= T^*\phi_1: T^*D^n \hookrightarrow ({T^*D^n}')_L
	$$
	where  $({T^*D^n}')_L:= ({T^*D^n}',\lambda_{{T^*D^n}', T^*\phi_2(L)})\setminus T^*\phi_2(L)$. 
	It will be helpful to keep $(T^*D^n)_L$ and the larger version $({T^*D^n}')_L$ separate, even though  $T^*\phi_2: (T^*D^n)_L\rightarrow
	({T^*D^n}')_L$ is a strict sectorial equivalence. For the rest of this section, we consider the functor $ \times ({T^*D^n}')_L$ instead of the functor $\times {(T^*D^n)_L}$ (which are equivalent functors by the previous sentence). 
	Then the morphism $\eta_{T^*D^n}$ induces a natural transformation 
	$$
	\eta: Id \rightarrow \times ({T^*D^n}')_L
	$$
	in $\weinstab$, 
	where $\eta_X:  X \rightarrow  X \times ({T^*D^n}')_L$ is defined by 
	the proper inclusion on the stabilized version of $X$:
	$$
	Id_X \times \eta_{T^*D^n}:  X \times T^*D^n \rightarrow X \times ({T^*D^n}')_L
	$$
	
	Since $\times ({T^*D^n}')_L$ is a functor of $\weinstr$,  we can apply it to the morphism $\eta_X$ to produce the morphism
	$$
	Id_X \times \eta_{T^*D^n} \times Id_{({T^*D^n}')_L}: X  \times T^*D^n \times ({T^*D^n}')_L \rightarrow  X \times ({T^*D^n}')_L \times ({T^*D^n}')_L
	$$
	Similarly, there is the morphism $\eta_{X\times ({T^*D^n}')_L}$  defined by the proper inclusion 
	$$
	Id_X \times Id_{({T^*D^n}')_L} \times \eta_{T^*D^n}: X \times ({T^*D^n}')_L \times T^*D^n \rightarrow X \times ({T^*D^n}')_L \times ({T^*D^n}')_L
	$$
	Using a swapping identifications of $X \times ({T^*D^n}')_L \times T^*D^n$ with $X \times T^*D^n \times ({T^*D^n}')_L$, these two morphisms are conjugate in $\weinstr$. So one morphism is an equivalence if and only if the other is an equivalence.  
	
	The following theorem is the main result of this section.
	
	\begin{theorem}\label{thm: idempotency_functor}	
		For any Liouville sector $X$,
		the proper inclusions	$Id_{({T^*D^n}')_L} \times \eta_X, \eta_{X\times ({T^*D^n}')_L}$ are  equivalences in $\weincrit$. In particular,	
		$\times ({T^*D^n}')_L$ is a localization functor of $\weincrit$ via  the natural transformation $\eta: Id \rightarrow \times ({T^*D^n}')_L$.
	\end{theorem}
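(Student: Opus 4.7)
The strategy is to exhibit $L(-) = - \times {T^*D^n_L}'$ as a smashing localization of $\weincrit$, after which the claimed equivalences will follow formally. Let $A := {T^*D^n_L}'$, and regard the proper inclusion $u := \eta_{T^*D^n} \colon T^*D^n \to A$ as a unit map $\mathbf{1} \to A$, using that $T^*D^n$ represents the monoidal unit of $\weincrit$ after stabilization (Remark~\ref{remark. lioustab symmetric monoidal}). By Example~\ref{example. smashing localization}, together with Propositions~\ref{prop. idempotents are localizations} and~\ref{prop. localizations are localizations}, the functor $L(-)$ will admit an idempotent structure induced by $\eta$, so that both $\eta_{L(X)}$ and $L(\eta_X)$ are equivalences in $\weincrit$ for every $X$, as soon as I verify the single equivalence
\eqnn
u \times \id_A \colon T^*D^n \times A \longrightarrow A \times A
\eqnd
in $\weincrit$.

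To verify this equivalence, I invoke the comparison Theorem~\ref{thm: comparison}. The sector $T^*D^n$ has a single parametrized critical handle, while ${T^*D^n_L}'$ has a single remaining parametrized critical handle at $x_1$ (the handle at $x_2$ having already been carved). Theorem~\ref{thm: comparison} then furnishes natural equivalences in $\weincrit$,
\eqnn
T^*D^n \times T^*D^n_L \simeq (T^*D^n)_L = T^*D^n_L, \qquad {T^*D^n_L}' \times T^*D^n_L \simeq ({T^*D^n_L}')_L,
\eqnd
and the naturality square of Theorem~\ref{thm: comparison} applied to the parametrization-preserving inclusion $\eta_{T^*D^n}$ identifies $u \times \id_A$, up to equivalence in $\weincrit$, with the subdomain inclusion $(\eta_{T^*D^n})_L \colon T^*D^n_L \to ({T^*D^n_L}')_L$ induced by carving. (The mismatch between $T^*D^n_L$ and ${T^*D^n_L}'$ as second factors is absorbed by the trivial inclusion relating them in $\weincrit$.) Thus it suffices to show that $(\eta_{T^*D^n})_L$ is an equivalence in $\weincrit$.

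Geometrically, $({T^*D^n_L}')_L$ is $T^*D^n$ equipped with the birth--death-modified structure $\lambda'_{std}$ and with \emph{two} parallel copies of $L$ carved out (one near each of $x_1$ and $x_2$), whereas $T^*D^n_L$ carves only a single copy. These two parallel copies of $L$ are Lagrangian-isotopic through the same birth--death homotopy that connects $\lambda'_{std}$ to $\lambda_{std}$, and Lemma~\ref{lemma: Lagrangian_isotopy} promotes this to an isotopy through strongly exact Liouville forms. Combining this isotopy with the unlinking-under-stabilization phenomenon (Proposition~\ref{prop: unlinking_stabilization}, referenced in the introduction), I expect the cobordism from $T^*D^n_L$ to $({T^*D^n_L}')_L$ to become subcritical after stabilization and subcritical handle attachments; the inclusion is then a subcritical morphism, hence an equivalence in $\weincrit$ by Definition~\ref{def:sub_morphism} and Proposition~\ref{prop: convert_subdomain_to_proper_inclusion}.

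The main obstacle is this final geometric step: verifying that the ``extra'' carved region near $x_2$ in $({T^*D^n_L}')_L$---which is itself a copy of $T^*D^n_L$, hence not subcritical in isolation---can be absorbed at the level of $\weincrit$. The idea is that, after passing through the comparison equivalence, an additional $T^*D^n_L$ factor becomes available to soak up this extra carving via the unlinking argument, which is precisely the geometric incarnation of the idempotency of $T^*D^n_L$ under product. Getting the handle-theoretic bookkeeping right---matching the subdomain cobordism with the naturality equivalences of the comparison theorem and tracking the Lagrangian isotopy compatibly through stabilization---is where the principal technical work will reside.
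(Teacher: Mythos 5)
Your first paragraph reproduces the paper's formal reduction exactly: by Example~\ref{example. smashing localization}, Corollary~\ref{corollary. idempotent objects are algebras}, and the swap of factors, everything comes down to the single equivalence $u \tensor \id_A$. The gap is in how you propose to verify that equivalence. Your route passes through the comparison theorem to the $2n$-dimensional map $(\eta_{T^*D^n})_L \colon T^*D^n_L \to ({T^*D^n_L}')_L$ and then hopes the complementary cobordism becomes subcritical after stabilization. It cannot. First, your description of $({T^*D^n_L}')_L$ as ``two parallel copies of $L$ carved out'' silently assumes that the Weinstein structure $\lambda_{T^*D^n,L}$ realizing $L$ as a co-core has no index-$n$ critical points other than $L$ and one cotangent fiber; this is Assumption~1 of Section~\ref{sec: idempotency} and is only arranged after one stabilization via Gromov's h-principle for subcritical isotropics (Proposition~\ref{prop: two_cocores}), a step you never supply. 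Second, and fatally, even granting that: since ${T^*D^n_L}'$ still has a co-core inside the $\phi_2$-region, the complement of the image of $(\eta_{T^*D^n})_L$ contains an entire copy of $(T^*D^n_L)_L$, hence the critical index-$n$ handle of $T^*D^n_L$. That handle can never be traded away by attaching or removing subcritical handles --- if it could, $T^*D^n_L$ would be subcritical and $\mathcal{W}(T^*D^n_L)$ would vanish, contradicting $\mathcal{W}((T^*D^n)_P)\cong \ZZ[1/P]$. So $(\eta_{T^*D^n})_L$ is simply not a subcritical morphism; it is an equivalence in $\weincrit$ only because it is conjugate, through the nontrivial comparison equivalences, to a different map that is subcritical. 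Your closing sentence (``an additional $T^*D^n_L$ factor becomes available to soak up this extra carving \ldots which is precisely the geometric incarnation of the idempotency'') is where the actual content should go, and as written it is circular: it appeals to the idempotency being proved.

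What the paper does instead is stay in the product. It introduces a second natural transformation $\theta$ whose value on $X = T^*D^n_L$ carves out the \emph{single} Lagrangian $(T^*_x D^n)^L \times T^*_{x_2}D^n$ from $T^*D^n_L \times (T^*D^n,\lambda_{std}')$, proves $\eta \simeq \theta$ in $\weincrit$ (Theorem~\ref{thm: idempotency_comparison}), and then shows that this one Lagrangian is removable subcritically because $(T^*_x D^n)^L$ is an unknot \emph{inside} $T^*D^n_L$: after stabilizing, the carved copy of $L$ and the copy near the surviving co-core form a parallel link (Propositions~\ref{prop: unlinking_stabilization} and~\ref{prop: parallel_links_unknot}), so the second becomes an unknot in the complement of the first, and removing an unknot times a cotangent fiber is the removal of the top handle of a cancelling pair. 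You cite the unlinking proposition but aim it at the wrong target (subcriticality of the $2n$-dimensional cobordism rather than unknottedness of the carved Lagrangian in the product). To repair the proposal you would need both Proposition~\ref{prop: two_cocores} and this unknot argument, at which point you would essentially have reconstructed the paper's proof.
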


	Since $\eta_X$ is defined to be $Id_X \times \eta_{T^*D^n}$, to prove Theorem \ref{thm: idempotency_functor} it suffices to prove this result for $X = T^*D^n$, {ie }that the (unstabilized) map
	$$
	Id_{({T^*D^n}')_L}\times\eta_{T^*D^n} : ({T^*D^n}')_L \times T^*D^n \rightarrow 
	({T^*D^n}')_L \times ({T^*D^n}')_L
	$$
	is an equivalence in $\weincrit$.      
    Note that attaching subcritical handles to these sectors is crucial,	since  $T^*D^n \times ({T^*D^n}')_L$ and $({T^*D^n}')_L \times ({T^*D^n}')_L$ have different singular cohomology in degree $2n-1$ (since $({T^*D^n}')_L$ is diffeomorphic to $T^*D^n$ with a subcritical handle attached, as discussed in Section \ref{sec: topology_flex_handles}) and hence are not even diffeomorphic after stabilizing.

	\subsection{\texorpdfstring{A natural transformation for $(\ )_L$}{A natural transformation for carving}}\label{sec: second_natural_transformation}
	
	To prove Theorem \ref{thm: idempotency_functor}, we introduce a natural transformation $\theta: Id \rightarrow (\ )_L$ for the non-homotopy invariant P-flexibilization functor  $(\ )_L$. As we explain, it will be easier to prove idempotency for this natural transformation.
	Recall that $X_L$ is a non-strict Weinstein subdomain of $X$. By Proposition \ref{prop: convert_subdomain_to_proper_inclusion}, this yields a morphism $X \rightarrow X_L$ in $\weincrit$. 
	To make this morphism explicit, we  use a similar construction as in Section \ref{sec: first_natural_transformation}. As we discuss in Remark \ref{rem: different_definition} below, the definition of $(\ )_L$ here will be slightly different than in Sections \ref{sec: first_look_flexibilization}, \ref{sec: comparison_P_flexibilization} but are all equivalent in $\weincrit$ by the comparison result Theorem \ref{thm: comparison}.

	As in Section \ref{sec: first_natural_transformation},
    we consider the Morse-Bott function $f'$ on ${D^n}'$ depicted in Figure \ref{fig: Modified_TDn} which induces the Weinstein structure $({T^*D^n}', \lambda_{{T^*D^n}'})$
	and have two strict proper inclusions 
	$$
	Id_X \times T^*\phi_1, Id_X \times T^*\phi_2: X\times (T^*D^n,\lambda_{std}) \rightarrow X \times ({T^*D^n}', \lambda_{{T^*D^n}'})
	$$

	The co-cores of $X \times({T^*D^n}', \lambda_{{T^*D^n}'})$ are $C_X \times T^*_{x_1} D^n$ and $C_X \times T^*_{x_2} D^n$. 
	Next, recall that there is an interior Weinstein homotopy of $X \times (T^*D^n,\lambda_{std}) = (X\times T^*D^n, \lambda_{X\times  T^* D^n})$ to $(X \times T^*D^n, \lambda_{X\times  T^* D^n, L \times T^*_0 D^n}) 
	$ that has $C_X^L \times T^*_0 D^n$ as co-cores; we recall that near the product of co-cores $T^*D^n \times T^*D^n$, the Weinstein structure $\lambda_{T^*D^n \times T^*D^n, L \times T^*_0 D^n}$ is obtained by swapping the Weinstein structure
	$\lambda_{T^*D^n \times T^*D^n, T^*_0 D^n\times L}$.
	We can apply this homotopy to 
	$Id_X\times T^*\phi_2(X \times T^*D^n) \subset X \times {T^*D^n}'$,  while keeping the Weinstein structure 
	on 	$Id_X\times T^*\phi_1(X \times T^*D^n) \subset X \times {T^*D^n}'$
	fixed. 
	We will call the resulting structure
	$
	(X \times {T^*D^n}', \lambda_{X \times {T^*D^n}', L \times T^*\phi_2(T^*_0 D^n})
	$
	which has $C_{X}^L \times T^*\phi_{2}(T^*_0 D^n) = C_{X}^L\times T^*_{x_2} D^n$ as a co-core.
	
	There is still a strict proper inclusion 
	$$
	Id_X \times T^*\phi_1: X \times T^*D^n \rightarrow (X \times T^*D^n{'},
	\lambda_{X \times {T^*D^n}{'}, L\times T^*\phi_{2}(T^*_0 D^n)}  )
	$$
	Since $C_{X}^L \times T^*\phi_2(T^*_0 D^n)$ are disjoint from the image of $Id_X \times T^*\phi_1$, there is an induced proper inclusion 
	$$
	\theta_{X, L}:= 	Id_X \times T^*\phi_1: X \times T^*D^n \rightarrow (X\times {T^*D^n}{'})_{L \times T^*_{x_2} D^n}
	$$   
	where we define
	\begin{equation}\label{eqn: XTD^n_{alternative}}
	(X\times {T^*D^n}{'})_{L \times T^*_{x_2} D^n}
	:= (X\times {T^*D^n}{'}, \lambda_{X \times {T^*D^n}{'}, L\times T^*\phi_2(T^*_{0} D^n)})\setminus 
	\coprod_{C_X} C_X^L \times T^*_{x_2} D^n
	\end{equation}
That is, $\theta_{X, L}$ is obtained from $Id_X \times T^*\phi_1$ by restricting its codomain $X\times {T^*D^n}{'}$ to the subsector $(X\times {T^*D^n}{'})_{L \times T^*_{x_2} D^n}$.

	Using $\theta_{X, L}$, we get a natural transformation 
	$$
	\theta:  Id \rightarrow (\_  \times {T^*D^n}{'})_{L \times T^*_{x_2} D^n}
	$$
	in $\weinparam$; that is, for any proper inclusion $i: X \hookrightarrow Y$ of parametrized Weinstein sectors, the following diagram commutes
	\begin{equation}
	\begin{tikzcd}
	X \times T^*D^n \arrow{d}{\theta_{X, L}} \arrow{r}{f \times Id_{T^*D^n}} & Y \times T^*D^n  \arrow{d}{\theta_{Y, L}}\\
	(X  \times {T^*D^n}{'})_{L\times T^*_{x_2} D^n} \arrow{r}{(f \times Id)_{L\times T^*_{x_2} D^1}}& (Y \times {T^*D^n}{'})_{L\times T^*_{x_2} D^n}
	\end{tikzcd}
	\end{equation}

	\begin{remark}\label{rem: different_definition}
		Note that the definition of $(X\times {T^*D^n}{'})_{L \times T^*_{x_2} D^n}$ in equation \eqref{eqn: XTD^n_{alternative}}  differs slightly from the definition of $(X\times T^*D^n)_{L \times T^*_0 D^n}$ given in  Section \ref{sec: first_look_flexibilization}
		since we use $({T^*D^n}', \lambda_{{T^*D^n}'})$ instead of $(T^*D^n, \lambda_{std})$. 
		However, there is a strict sectorial equivalence  from $(X\times T^*D^n)_{L \times T^*_0 D^n}$ to $(X\times {T^*D^n}')_{L \times T^*_{x_2} D^n}$
		via the map $T^*\phi_2$. Also,  $(X\times T^*D^n)_{L \times T^*_0 D^n}$ is equivalent to $X_L$ by Example \ref{ex: stabiilize T^*Dn_L}. Hence, we can consider $\theta_{X, L}$ as a natural transformation between $Id$ and the usual functor $(\ )_L$. 
	\end{remark}

	\subsection{Comparison of natural transformations}
	
	In this section, we prove a comparison result between the two natural transformations $\eta$ and $\theta$ discussed in the previous sections. 
	\begin{theorem}\label{thm: idempotency_comparison}
		For any Weinstein sector $X$, there is 
		an equivalence $\phi_X' :  (X \times {T^*D^n}')_{L \times T^*_0 D^n} \tilde{\rightarrow} X \times ({T^*D^n}')_L$ and a homotopy commutative diagram in $\weincrit$: 
		\begin{equation}
		\begin{tikzcd} 
		X  \times T^*D^n \arrow{r}{\theta_{X}}	
		\arrow{dr}{\eta_X} & 
		(X \times {T^*D^n}')_{L \times T^*_{x_2} D^n} 
		\arrow{d}{\phi_X'}\\	
		& 
		\ \ X \times ({T^*D^n}')_L
		\end{tikzcd}
		\end{equation}		 
	\end{theorem}
    \begin{remark}
        We note that the morphisms $\theta_X$ and $\eta_X$ are morphisms in $\weinstab$ since they are given by proper inclusions. On the other hand, the equivalence $\phi_X'$ only holds in $\weincrit$, hence the necessity of working in $\weincrit$ in Theorem  \ref{thm: idempotency_comparison}.         
    \end{remark}
	
	\begin{remark}
		We also note that $\phi_X'$ is conjugate to the equivalence $\phi_X: X_L \times T^*D^n\tilde{ \rightarrow} X \times (T^*D^n)_L$ from the previous section.
	\end{remark}

	The proof of Theorem \ref{thm: idempotency_comparison} follows from the following result, which is its analog before stabilizing and inverting subcritical handles. 
	
	\begin{proposition}\label{prop: comparison_natural_transformations}
		If $n = \dim L = \frac{1}{2}\dim X$ is even, there is a strictly commuting diagram of symplectic embeddings: 
		\begin{equation}\label{comm: comparison_natural_transformations}
		\begin{tikzcd} 
		X \times T^*D^n  \arrow{r}{\eta_{X} = Id_X \times \eta_{T^*D^n}}	
		\arrow{d}{Id} &  X \times ({T^*D^n}^{'})_{L}  \arrow{d}{i_{X, (T^*D^n, L)}}\\
		X \times T^*D^n \arrow{d}{Id} \arrow{r}  &  X \times ({T^*D^n}', 
		\lambda_{{T^*D^n}', T^*\phi_2(L)})\setminus \coprod_{C_X}{C_X \times T^*\phi_2(L) } \arrow{d}{Id}
		\\
		X \times T^*D^n \arrow{r}\arrow{d}{Id}  &  (X \times {T^*D^n}')_{T^*_0 D^n \times L } \arrow{d}{\phi_{X, T^*D^n, L, T^*_0 D^n}} \\
		X \times T^*D^n \arrow{r}{ \theta_{X} }  &  (X \times {T^*D^n}')_{L\times T^*_{x_2} D^n }
		\end{tikzcd}
		\end{equation}
		where the horizontal maps are strict proper inclusions, $i_{X, (T^*D^n, L)}$ is a strict subcritical subdomain inclusion, and the map $Id$  on the middle-right is an isomorphism up to Weinstein homotopy, and $\phi_{X, T^*D^n, L, T^*_0 D^n}$ is a strict Liouville isomorphism.
		\begin{proof}
			The proof is essentially a relative version of the proof of Proposition \ref{prop: commutative_diagram_sector_localization}. 
			There is a strictly commuting diagram 
			\begin{equation}
			\begin{tikzcd} 
			T^*D^n  \arrow{r}{\eta_{T^*D^n}}	
			\arrow{d}{Id} &   ({T^*D^n}^{'})_L  \arrow{d}{i_{(T^*D^n,L)}}\\
			T^*D^n  \arrow{r}{ T^*\phi_1 }  & \ \  ({T^*D^n}', \lambda_{{T^*D^n}', T^*\phi_2(L)})
			\end{tikzcd}
			\end{equation}
			and taking the product with $X$, we get the strictly commuting diagram
			\begin{equation}
			\begin{tikzcd} 
			X \times T^*D^n  \arrow{r}{\eta_{X}}	
			\arrow{d}{Id} &   X \times ({T^*D^n}^{'})_L  \arrow{d}{Id_X \times i_{(T^*D^n, L)}}\\
			X \times T^*D^n   \ \ \arrow{r}{Id_X \times  T^*\phi_1 }  & \ \  X\times  ({T^*D^n}', \lambda_{{T^*D^n}', T^*\phi_2(L)})
			\end{tikzcd}
			\end{equation}
			Then as in the proof of Proposition \ref{prop: commutative_diagram_sector_localization}, 
			$X \times ({T^*D^n}')_L \xrightarrow{\subset} X \times  ({T^*D^n}', \lambda_{{T^*D^n}', T^*\phi_2(L)})$ is a strict Weinstein subdomain inclusion, and the complementary cobordism has critical points with co-cores $C_X \times T^*\phi_2(L)$. Since these co-cores are in the complement of the inclusion 
			$Id_X \times T^*\phi_1$,
			this induces the top square of  equation \eqref{comm: comparison_natural_transformations}. 
			
			Next, we observe that the identity is a bordered Weinstein equivalence from 
			$$X \times ({T^*D^n}', \lambda_{{T^*D^n}', T^*\phi_2(L)}) \setminus \coprod_{C_X} C_X \times T^*\phi_2(L)$$
			to 
			$$
			(X \times {T^*D^n}')_{T^*_0 D^n \times L}:= (X \times {T^*D^n}', \lambda_{X \times {T^*D^n}', T^*_0 D^n \times T^*\phi_2(L)}) 
			\setminus \coprod_{C_X} C_X \times T^*\phi_2(L)$$
			To see this, we proceed as in the proof of Proposition \ref{prop: commutative_diagram_sector_localization} and note that the form $\lambda_X+ \lambda_{{T^*D^n}', T^*\phi_2(L)}$ looks like $(T^*D^n \times T^*D^n, \lambda_{T^*D^n, std} + \lambda_{T^*D^n, L})$ near  $C_X \times T^*_{x_2} D^n \subset X \times T^*\phi_2(T^*D^n)$. Then we  use part 1) of Proposition \ref{prop: bordered_to_interior_homotopy_Weinstein} to obtain a  homotopy of forms on $T^*D^n \times T^*D^n$ 
			from $\lambda_{T^*D^n, std} + \lambda_{T^*D^n, L}$ to $\lambda_{T^*D^n \times T^*D^n, T^*_0 D^n \times L}$, which agrees with 
			$\lambda_{T^*D^n, std} + \lambda_{T^*D^n, L, t}$ near $\partial T^*D^n \times T^*D^n$, is constant near $T^*D^n \times \partial T^*D^n$, and is $\lambda_{T^*D^n, std} + \lambda_{T^*D^n, L}$ on a slightly smaller copy of $T^*D^n \times T^*D^n$. 
			We then extend this homotopy to $X \times {T^*D^n}'$ by taking the homotopy
			$\lambda_{X} + (T^*\phi_2)_*(\lambda_{T^*D^n, L, t})$ on the rest of
			$X \times T^*\phi_2(T^*D^n)$ and the  constant homotopy on $X \times T^*\phi_1(T^*D^n)$. 
			Since this homotopy happens in the complement of
			$X \times T^*\phi_1(T^*D^n)$, the second square of equation \eqref{comm: comparison_natural_transformations} also commutes.  See Figure \ref{fig: movie_handle}; to make that figure appropriate for this proof, the $Z$ in that figure there should be $X$ and the $X$ should be ${T^*D^n}'$.
			
			Finally, the sector $(X \times {T^*D^n}')_{L \times T^*_{x_2} D^n}$ is constructed
			via the local model  $(T^*D^n \times T^*D^n, \lambda_{T^*D^n \times T^*D^n, L \times T^*_0 D^n})$, which is the swap of $(T^*D^n \times T^*D^n, \lambda_{T^*D^n \times T^*D^n, T^*_0 D^n\times L})$. So the cut-off swap map $\phi_{X, T^*D^n, L, T^*_0 D^n}$ is a strict Liouville isomorphism. Since this map is supported away from $Id_X \times T^*\phi_1(X \times T^*D^n)$, the last square also strictly commutes.
		\end{proof}
		\begin{proof}[Proof of Theorem \ref{thm: idempotency_comparison}]
			If $n$ is even, this follows from Proposition \ref{prop: comparison_natural_transformations} and Propositions \ref{prop: convert_homotopy_equivalence}, \ref{prop: convert_subdomain_to_proper_inclusion}. We note that the top commuting square in Proposition \ref{prop: comparison_natural_transformations}
			is a pullback diagram since the left vertical map is the identity map; hence Proposition	\ref{prop: convert_subdomain_to_proper_inclusion} can be applied in this setting.

			If $n$ is odd, we first have to stabilize and show that the resulting maps $\eta_{X\times T^*D^1},\theta_{X \times T^*_0 D^1}$ for Lagrangian $L \times T^*_0 D^1$ are related by a homotopy commutative diagram in $\weincrit$ to the unstabilized maps $\eta_X, \theta_X$. This follows from the first half of Proposition \ref{prop: comparison_natural_transformations} 
			applied to $X = T^*D^1$ that discusses the maps $i_{T^*D^1, (T^*D^n, L)}$ and $Id$ (the proposition is stated for $X$ with $\frac{1}{2}\dim X = \dim L = n$ but the first half holds for any $X$).
		\end{proof}
		
	\end{proposition}
	
	\subsection{Proof of idempotency}\label{ssec: idempotency_proof}
	
	Recall that to prove Theorem \ref{thm: idempotency_functor}, it suffices to prove that 
	$$
	Id_{({T^*D^n}')_L}\times\eta_{T^*D^n} : ({T^*D^n}')_L \times T^*D^n \rightarrow 
	({T^*D^n}')_L \times ({T^*D^n}')_L
	$$
	is an equivalence in $\weincrit$. 
	Since $({T^*D^n}')_L$ is equivalent to $(T^*D^n)_L$ in $\weincrit$, it suffices to prove that  
	$$
	Id_{(T^*D^n)_L}\times\eta_{T^*D^n} : (T^*D^n)_L \times T^*D^n \rightarrow 
	(T^*D^n)_L \times ({T^*D^n}')_L
	$$
	where we use $(T^*D^n)_L$ instead of $({T^*D^n}')_L$ for the first term in the product; this will turn out to be slightly more convenient. 
	Then by the comparison result  Theorem \ref{thm: idempotency_comparison} applied to $X = (T^*D^n)_L$, it suffices to prove that 
	$$
	\theta_{(T^*D^n)_L}: (T^*D^n)_L \times T^*D^n\rightarrow ((T^*D^n)_L\times {T^*D^n}')_{L\times T^*_{x_2} D^n}
	$$
	is an equivalence in $\weincrit$. 
	We will do this for a particular special choice of model for $(T^*D^n)_L$, after first stabilizing $L$.

	Let $C$ be a Lagrangian co-core of $(T^*D^n)_L$ and $C^L$ be a copy of $L$ embedded in a Weinstein neighborhood of $C$.
	Then	$\theta_{(T^*D^n)_L}$ is the proper inclusion
	$$
	(T^*D^n)_L \times T^*D^n\rightarrow (T^*D^n)_L\times {T^*D^n}' \backslash \coprod_{C} C^L \times T^*_{x_2} D^n
	$$
	induced by $T^*D^n \rightarrow {T^*D^n}'$. 
	Next, suppose that each $C^L \subset (T^*D^n)_L$ is the co-core of an index $n$ handles that is in \textit{cancelling position} with an index $n-1$ handles. That is, there is a Weinstein homotopy that cancels both of these handles (which is the case if the attaching sphere of the index $n$ handle intersects the belt sphere of the index $n-1$ handle exactly once). In this case, the union of the index $n$ and index $n-1$ handles is a trivial cobordism (up to Weinstein homotopy) and $C^L \subset (T^*D^n)_L$  is a Lagrangian unknot. Then $\theta_{(T^*D^n)_L}$ is an equivalence since carving out $C^L$ from  ${(T^*D^n)_L}$ is the same as adding a subcritical handle to $(T^*D^n)_L$ (and similarly, carving out 
	$C^L \times T^*_{x_2} D^n$ from  $(T^*D^n)_L \times {T^*D^n}'$ is the same as adding subcritical handles to the complement of the image of $(T^*D^n)_L \times T^*D^n \rightarrow (T^*D^n)_L \times {T^*D^n}'$). However a priori, $(T^*D^n)_L$ might have many co-cores $C$ and so we cannot control $C^L$ and assume that it is a Lagrangian unknot. The following key proposition shows that we can control the new co-cores \textit{after} stabilization.

	\begin{proposition}\label{prop: two_cocores}
		For any regular Lagrangian disk $L^n \subset (T^*D^n, \lambda_{T^*D^n, L, t})$, 
		there is an interior Weinstein homotopy of $(T^*D^n \times T^*D^1, \lambda_{T^*D^n, L} + \lambda_{T^*D^1, std})$, relative to $L \times T^*_0 D^n$, to a Weinstein structure $\lambda_{T^*D^n\times T^*D^1, L \times T^*_0 D^1}$ with exactly two index $n+1$ critical points whose Lagrangian co-cores are $T^*_0  D^n \times T^*_{-1/2} D^1$ and $L^n \times T^*_0 D^1$.
	\end{proposition}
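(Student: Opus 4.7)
The plan is to perform a sequence of Weinstein handle cancellations in the stabilized sector $(T^*D^n \times T^*D^1, \lambda_{T^*D^n, L} + \lambda_{T^*D^1, std})$, supported away from $L \times T^*_0 D^1$, that eliminates all but one of the superfluous index-$(n+1)$ critical handles. The main symplectic input is the h-principle for subcritical isotropics.

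Setup: choose a Morse model of $(T^*D^1, \lambda_{std})$ with a unique index-$1$ critical point $c$, and let $p_L, p_2, \ldots, p_k$ denote the index-$n$ critical points of $(T^*D^n, \lambda_{T^*D^n, L})$, with $p_L$ corresponding to $L$ and the remaining $p_i$ to co-cores $C_i$. The index-$(n+1)$ critical points of the product Weinstein structure are then $p_L \times c$, with co-core $L \times T^*_0 D^1$, together with $p_i \times c$ for $i \geq 2$, with co-cores $C_i \times T^*_0 D^1$. Fix a small tubular neighborhood $N$ of $L \times T^*_0 D^1$; all subsequent homotopies will be supported in the complement of $N$, so they are automatically interior and relative to $L \times T^*_0 D^1$.

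The main step is the handle cancellation. Smoothly $T^*D^n \times T^*D^1 \cong T^*D^{n+1}$ admits a handle decomposition with a single critical $(n+1)$-handle, so $k-2$ of the superfluous Weinstein handles admit smooth $n$-handle partners with which they form smoothly cancelling pairs. Since an index-$n$ handle in a $(2n+2)$-dimensional sector is subcritical, the h-principle for subcritical isotropics (Gromov \cite{gromov_hprinciple}) promotes any smooth isotopy of such a Legendrian attaching sphere to an isotropic isotopy; moreover, the extra $T^*D^1$ direction provides the room needed to bring the attaching data of each cancelling pair into standard cancelling form without passing through $N$. Applying the Weinstein handle cancellation lemma of Cieliebak--Eliashberg~\cite{CE12} iteratively produces a Weinstein structure on $T^*D^n \times T^*D^1$ with exactly two index-$(n+1)$ critical points: $p_L \times c$, with co-core $L \times T^*_0 D^1$, and a single remaining critical point whose co-core is a Lagrangian disk smoothly isotopic to a cotangent fiber. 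A final local Weinstein isotopy puts this co-core in the form $T^*_x D^{n+1}$ for some $x$ near $\partial D^{n+1}$.

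The main obstacle is verifying that, for each of the $k-2$ cancellations, the h-principle for subcritical isotropics can be applied compatibly with the already-fixed Weinstein data: the attaching sphere of each subcritical partner must be isotoped entirely within the complement of $N$ and within a single intermediate level set of the Weinstein Morse function on $T^*D^n \times T^*D^1$. This amounts to a careful choice of the Morse function used to define the stabilization, together with the observation that the added $T^*D^1$ factor gives exactly the degree of smooth flexibility needed to separate the $k-2$ cancellation pairs from one another and from $L \times T^*_0 D^1$.
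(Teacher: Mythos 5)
There is a genuine gap at the heart of your cancellation step. The superfluous handles you propose to cancel are the index-$(n+1)$ handles $p_i\times c$, whose attaching spheres are Legendrian $n$-spheres in a contact manifold of dimension $2n+1$ --- these are \emph{critical}, not subcritical, isotropics. Gromov's h-principle for subcritical isotropics only realizes smooth isotopies of isotropics of dimension $<n$, i.e.\ attaching spheres of handles of index $\le n$; it says nothing about moving the Legendrian attaching sphere of an index-$(n+1)$ handle into cancelling position with a chosen index-$n$ partner (transverse intersection with the belt sphere in a single point). Arranging that is exactly the kind of critical-index problem that in general requires the h-principle for loose Legendrians, which the paper deliberately avoids and which would not apply here without knowing the relevant Legendrians are loose. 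Controlling only the subcritical partner's attaching sphere, as you do, does not touch this difficulty. The further claim that the one surviving co-core $C_i\times T^*_0D^1$ can be brought to $T^*_xD^{n+1}$ by a ``local Weinstein isotopy'' is also unjustified: smooth isotopy of these disks does not give a Lagrangian or Weinstein isotopy.

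The paper's proof sidesteps all of this by never manipulating a critical-index attaching sphere. It passes to the stopped-domain picture, writing $(T^*D^n,\lambda_{T^*D^n,L})$ as $\overline{(T^*L^+\cup C,\partial D^n)}$ and $T^*D^1$ as $\overline{(T^*F^+,\partial D^1)}$, where $T^*F^+$ is a ball carrying a single index-$0$ critical point; the index-$1$ point of the $T^*D^1$ sector lives near the stop and is handled separately. In the product stopped domain the entire superfluous cobordism is $C\times T^*F^+$, all of whose handles have index $\le n$ in dimension $2n+2$, hence subcritical; it is a smoothly trivial h-cobordism, so Gromov's h-principle (now legitimately applicable) trivializes it relative to $L\times F$. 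The domain collapses to $T^*(L\times F)^+\cong B^{2n+2}$ stopped along $T^*\partial D^{n+1}$, and the two desired index-$(n+1)$ critical points are then created explicitly: $T^*_xD^{n+1}$ arises as the linking disk of the stop, and $L\times T^*_0D^1$ by a handle creation supported near the regular disk. To salvage your approach you would need to reorganize it so that every cancellation happens among subcritical handles, which is essentially what the stopped-domain maneuver accomplishes.
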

	
	We momentarily postpone the proof of Proposition \ref{prop: two_cocores} until the end of this section and  now explain how to prove idempotency for the functor $\times (T^*D^{n+1})_{L\times T^*_0 D^1}$ 
	assuming this result. 
	By Proposition \ref{prop: two_cocores}, $T^*D^{n+1}$ has a Weinstein presentation with two co-cores $C_1 = T^*_0 D^n \times T^*_{-1/2} D^1$ and $C_2 = L \times T^*_0 D^1$. So $T^* D^{n+1}_{L\times T^*_0 D^1}:= T^*D^{n+1}\setminus L \times T^*_0 D^1$ has a single co-core $C_1 = T^*_0 D^n \times T^*_{-1/2} D^1$. Recall that we need to show that $C_1^{L\times T^*_0 D^1}$ is the co-core of an index $n+1$ handle that cancels with a subcritical handle.

	To do so, it is helpful to consider
	$$
	C_1^{L \times T^*_0 D^1} \coprod C_2 
	$$
	as a Lagrangian link, {ie }union of disjointly embedded Lagrangians. 	We say that a link $C \coprod C' \subset X$ of Lagrangian disks is \textit{parallel} if there is a strict proper inclusion 
	$i: T^*D^n \hookrightarrow X$ so that 
	$i(T^*_0 D^n \coprod T^*_x D^n) = C \coprod C'$ for some $x \ne 0$ in $D^n$. This perspective is helpful in light of the following observation. 		
	\begin{proposition}\label{prop: parallel_links_unknot}
		Let $C \coprod C' \subset X$ be a parallel Lagrangian link of Lagrangian disks. Then $C' \subset X \setminus C$ is Lagrangian co-core of an index $n$ handle that cancels an index $n-1$ handle. 
	\end{proposition}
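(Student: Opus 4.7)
The plan is to reduce the statement to a local Weinstein model in $T^*D^n$ and then realize the Lagrangian unknot structure via a localized handle cancellation.

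\textbf{Step 1 (Reduction to a local model).} By the definition of a parallel link, after an initial Lagrangian isotopy of the pair $C\coprod C'$ I may assume that $C=T^*_0 D^n$ and $C'=T^*_x D^n$ sit inside a fixed open Weinstein neighborhood $T^*D^n\hookrightarrow X$ for some $x\in D^n\setminus\{0\}$. Any Lagrangian isotopy of the link restricts to a Lagrangian isotopy of $C'$ in the complement $X\setminus C$, so it suffices to construct a Lagrangian isotopy from $T^*_x D^n$ to a fixed Lagrangian unknot model inside $T^*D^n\setminus T^*_0 D^n$.

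\textbf{Step 2 (Two-cocore Weinstein structure).} I endow $T^*D^n$ with the modified Morse--Bott Weinstein structure $\lambda'_{std}$ introduced in Section~\ref{sec: first_natural_transformation}, induced by a Morse(-Bott) function $f'$ on $D^n$ with two top-index critical points located at $0$ and $x$. By construction, $T^*_0 D^n$ and $T^*_x D^n$ are precisely the two index-$n$ Lagrangian cocores of $\lambda'_{std}$. Since $T^*D^n$ is a Weinstein ball, $\lambda'_{std}$ is Weinstein homotopic to the standard one-cocore structure via a homotopy that cancels $T^*_x D^n$ against a subcritical index-$(n-1)$ handle. Crucially, because this homotopy amounts to reversing a local modification of the standard Morse function $f$ supported near the maximum at $x$, both the index-$(n-1)$ handle and the cancelling homotopy may be chosen supported in a neighborhood of $T^*_x D^n$ disjoint from $T^*_0 D^n$ (cf.\ Figure~\ref{fig: Modified_TDn}).

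\textbf{Step 3 (Cancellation in the complement).} Carving out a small neighborhood of $T^*_0 D^n$ as in Section~\ref{sec: carving_out} leaves the localized cancelling pair at $x$ entirely intact. Running the Weinstein handle cancellation inside $T^*D^n\setminus T^*_0 D^n$ produces a continuous deformation of the cocore $T^*_x D^n$ through Lagrangian disks, shrinking it to a small Lagrangian disk supported in a Darboux ball. This Lagrangian isotopy realizes $T^*_x D^n$ as the fixed model of a Lagrangian unknot used in the proof of Proposition~\ref{prop: theta_equiv}, completing the argument.

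The main obstacle is Step 2: verifying that the cancelling pair at $x$ can genuinely be chosen to be geometrically localized near $T^*_x D^n$ and supported away from $T^*_0 D^n$. This reduces to a direct inspection of the Morse--Bott structure $\lambda'_{std}$, where $f'$ is constructed by gluing two embedded copies of the standard local maximum of $f$ at $0$ and $x$; the cancelling pair at $x$ arises by locally reversing the corresponding gluing, an operation manifestly supported in an arbitrarily small neighborhood of $x$.
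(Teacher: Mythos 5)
Your Steps 1 and 2 are essentially sound: reducing to the standard configuration $C = T^*_0 D^n$, $C' = T^*_x D^n$ inside a Weinstein neighborhood, and implanting a cancelling index-$(n-1)$/index-$n$ pair near $x$ by a modification of the Morse function supported in a small ball around $x$ disjoint from $0$, can all be arranged. The gap is in Step 3. A Weinstein cancellation does not produce ``a continuous deformation of the cocore through Lagrangian disks, shrinking it to a small Lagrangian disk supported in a Darboux ball.'' The cocore of an index-$n$ critical point is a \emph{properly embedded} Lagrangian disk whose boundary is the belt sphere on the contact (or sectorial) boundary; it remains a full properly embedded disk throughout the homotopy (in your model, essentially the cotangent fiber over the moving attracting zero), and at the birth--death moment the critical point simply disappears, leaving a full cotangent fiber $T^*_{x_*}D^n$ --- nothing shrinks, and nothing about this limit identifies it as an unknot. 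The decisive objection is that your Step 3 never uses the removal of $C$: the same two-cocore structure and the same localized cancellation exist in $T^*D^n$ itself, so your argument, if valid, would show that $T^*_x D^n$ is a Lagrangian unknot in the sector $[T^*D^n, T^*S^{n-1}]$. That is false: there $T^*_x D^n$ is Lagrangian isotopic to the cocore $T^*_0 D^n$, which generates $\mathcal{W}(T^*D^n)$ and is not a zero object. The statement you are implicitly invoking --- that the cocore of the top handle of a geometrically cancelling pair is Lagrangian isotopic to the standard unknot --- is true, but it is essentially the content of the proposition, and you give no argument for it.

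The missing ingredient is the one the paper's proof supplies. Carving out $C$ removes $T^*D^n_{\le 1/3}$, so $X\setminus C$ contains the collar $T^*(S^{n-1}\times[1/3,1])$ with a new \emph{free} (convex, non-stop) boundary component over $S^{n-1}\times\{1/3\}$, and $C' = T^*_x D^n$ with $|x|=1/2$ sits in this collar as the linking disk of that component. A linking disk of a hypersurface that is not a stop can be isotoped out through the corresponding free end into standard unknotted position --- the mechanism of Example~\ref{examples: subcritical_looseness} --- and this isotopy takes place entirely inside the collar, hence inside $X\setminus C$. This is where the hypothesis that $C$ has been removed enters essentially, and it is exactly the step absent from your proposal. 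If you wish to keep the cancellation language, you would need to show directly that the cocore of the $n$-handle of your cancelling pair can be slid through the freed boundary component into a standard chart; producing that isotopy is the actual proof.
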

	\begin{proof}
		Take a Morse function $f$ on $D^n$ whose interior critical points are two index $n$ critical points (at $0$ and $x$) and one critical point of index $n-1$; this Morse function can be obtained by starting with the standard structure that has a single critical point of index $n$ and creating a cancelling pair of critical points of index $n$ and $n-1$. 
		Then the  induced Weinstein structure on $T^*D^n$ has two index $n$ critical points with Lagrangian co-cores $T_0^* D^n$ and $T^*_x D^n$. Then $T^*D^n \backslash T^*_0 D^n$ has a single index $n$ handle which is cancelling with the other index $n-1$ handle and has co-core  $T^*_x D^n$; this proves the result for $T^*D^n$. 	Since $C\coprod C' \subset X$ is modelled on $T^*_0 D^n \coprod T^*_x D^n \subset T^*D^n$, the same result holds for $X$. 
	\end{proof}
	Finally, we note that 	$$
	C_1^{L \times T^*_0 D^1} \coprod C_2 
	$$ 
	is the link 
	$$
	L \times T^*_{-1/2} D^1 \coprod L \times T^*_0 D^1
	$$
	which is parallel. Indeed, if $\phi_L: T^*D^n \rightarrow T^*D^n$ is a parametrization of $L$, then $\phi_L \times Id_{T^*D^1}: T^*D^n \times T^*D^1 \rightarrow T^*D^{n} \times T^*D^1$ has the property that $\phi_L \times Id_{T^*D^1}(T^*_0 D^n \times T^*_{-1/2} D^1) =L \times T^*_{-1/2} D^1$
	and  $\phi_L \times Id_{T^*D^1}(T^*_0 D^n \times T^*_0 D^1) = L \times T^*_{0} D^1$. Hence  
	$$
	L \times T^*_{-1/2} D^1 \subset (T^*D^{n+1})_{L\times T^*_0 D^1} = T^*D^{n+1} \backslash L \times T^*_0  D^1
	$$
	is the co-core of a cancelling $n+1$ handle. This completes the proof of Theorem \ref{thm: idempotency_functor} for 
	the functor $\times (T^*D^{n+1})_{L\times T^*_0 D^1}$ using the stabilized Lagrangian $L\times T_0 D^1$. 
	
	\subsubsection{Simple Weinstein presentation, after stabilization}
	Next, we complete the proof of Proposition \ref{prop: two_cocores}, showing that $T^*D^{n+1}$ admits a simple Weinstein presentation with two co-cores $T^*_0 D^n\times T^*_{-1/2} D^1, L \times T^*_0 D^1$. Then we will complete the proof of Theorem \ref{thm: idempotency_functor}, idempotency for the functor $\times T^*D^{n}_{L}$. 
	\begin{proof}[Proof of Proposition \ref{prop: two_cocores}]
		In this proof, it will be useful to consider Weinstein sectors as associated to stopped Weinstein domains since we will consider separate homotopies of the stop and of the domain and then combine them into sectorial homotopies. To that end, we recall the following notation from Definitions \ref{defn. sectorial divisor}, \ref{defn. stopped domain}: 
		$[X, F]$  denotes a sector $X$ and its sectorial divisor $F$ and $\overline{(X_0, \Lambda)}$ denotes the sectorial completion of a stopped domain $(X_0, \Lambda)$. 	Since we will need to carve out the Lagrangian $L$ or $L \times T^*_0 D^1$, all our  homotopies are \textit{relative} to these Lagrangians, {ie }through forms vanishing on the Lagrangians.

		Since $L^n \subset [T^*D^n, T^*\partial D^n]$ is a regular disk, there is a bordered Weinstein homotopy of $[T^*D^n, T^*\partial D^n]$
		to a sector $(T^*D^n, \lambda_{T^*D^n, L})= \overline{(T^*L^+ \cup C^{2n}, \partial D^n)}$ so that $L \subset T^*D^n$ corresponds to the zero-section of $T^*L^+$. 
		Here $(T^*L^+ \cup C^{2n}, \partial D^n)$ is a stopped domain; $T^*L^+$ is the canonical Weinstein \textit{domain} structure on cotangent bundles with outward pointing Liouville vector field \textit{everywhere} and an index $0$ critical point on $L$, with $\partial L$ mapping to the contact boundary; $C^{2n}$ is a Weinstein cobordism and the attaching spheres of $C^{2n}$ are disjoint from $\partial L$. 
		Similarly, $F = T^*_0 D^1 \subset T^*D^1$ is a regular Lagrangian and so we can Weinstein homotope
		$[T^*D^1, T^* \partial D^1]$  to  $\overline{(T^*F^+, \partial D^1)}$. Here $(T^*F^+, \partial D^1)$ is just $(B^2, \pm 1)$, a ball with  a single index $0$ critical point in the interior and two points for stops. So $\overline{(T^*F^+, \partial D^1)}$ has two index $1$ critical points in the interior (corresponding the linking disks of the two stops), one index $0$ critical point in the interior (corresponding to the index $0$ critical point of $B^2$),  and two index $0$ critical points on the boundary (corresponding to the stops). 
		
		Next, we consider the product of these two sectors $(T^*D^n, \lambda_{T^*D^n, L})\times T^*D^1$.  The sector associated to the product of two stopped domains coincides with the product of sectors associated to two stopped domains. So this sector is
		$\overline{((T^*L^+ \cup C)\times T^*F^+, \Lambda)}$, where $\Lambda = (T^*L^+\cup C) \times T^*\partial D^1 \cup T^*\partial D^n \times T^*F^+$. Since this sector is bordered homotopy to $(T^*D^n, \lambda_{T^*D^n, std}) \times (T^*D^1, \lambda_{T^*D^1, std})$, the stop $\Lambda$ is a Weinstein homotopic to the stop of $T^*D^n \times T^*D^1$ which is $T^* \partial D^{n+1}$. Next, we proceed as in Part 3) of Proposition \ref{prop: bordered_to_interior_homotopy_Weinstein} and append the movie construction of this homotopy to $\overline{((T^*L^+ \cup C)\times T^*F^+, \Lambda)}$ and get a bordered homotopy rel $L \times F$ from $(T^*D^n, \lambda_{T^*D^n, L})\times T^*D^1$ to  $\overline{(((T^*L^+ \cup C) \times T^*F^+)', T^* \partial D^{n+1})}$. Here $((T^*L^+ \cup C)\times T^*F^+)'$ is a slight enlargement of $(T^*L^+ \cup C)\times T^*F^+$ with the same Lagrangian co-cores; hence we will identify these two domains.

		Next, we will Weinstein homotope the domain part $(T^*L^+ \cup C) \times T^*F^+$ of the stopped domain $((T^*L \cup C) \times T^*F^+, T^*\partial D^{n+1})$. First, we observe that $(T^*L \cup C)\times T^*F^+$ is a subcritical Weinstein domain since $T^*F^+$ is subcritical. In fact, the Weinstein cobordism $C \times T^*F^+$ in the complement of $T^*(L \times F)^+$ is subcritical. 
		Since $C$ is a disk, this cobordism is a smooth h-cobordism and hence is smoothly trivial if $n \ge 2$; if $n = 1$, there is only two Lagrangian disks in $T^*D^1$, the fiber and the unknot in which case the result can be checked explicitly. So $C^{2n}\times T^*F^+$ is smoothly trivial Weinstein cobordism, which is also subcritical, and hence by the h-principle for subcritical cobordisms~\cite{gromov_hprinciple}, it is Weinstein homotopic to the trivial cobordism. Furthermore, this Weinstein homotopy is relative to $L \times F$ since the smoothly isotopies of the attaching spheres can done away from  $\partial (L \times F)$
		and the isotropic isotopies are $C^0$-close to the smooth isotopies. In particular, $(T^*L^+ \cup C) \times T^*F^+$ is homotopic, relative $L \times F$, to $T^*(L\times F)^+$ ; note that $T^*(L\times F)^+$ is abstractly just a  Weinstein ball $B^{2n+2}$. 
		
		Now we return to the sector 
		$(T^*D^n, \lambda_{T^*D^n, L})\times T^*D^1$, which we have already proven is homotopic, relative $L \times F$, to $\overline{((T^*L^+ \cup C) \times T^*F^+, T^* \partial D^{n+1} )}$. In the previous paragraph, we homotoped the domain $(T^*L^+ \cup C) \times T^*F^+$, relative $L \times F$, to $T^*(L\times F)^+$. Since $(T^*L^+ \cup C) \times T^*F^+$ is just a Weinstein subdomain of the sector $\overline{((T^*L^+ \cup C) \times T^*F^+, T^* \partial D^{n+1} )}$, we can extend this homotopy of domains to an interior homotopy of the sector. In conclusion, $(T^*D^n, \lambda_{T^*D^n, L})\times T^*D^1$ is homotopic, relative $L \times F$, to
		$[(T^*(L \times  F)^+, T^* \partial D^{n+1})]$.
		Abstractly, this stopped domain  is just $(B^{2n+2},  \partial D^{n+1})$. However, the Lagrangian $L \times F \subset T^*(L \times F)^+$ is non-trivially linked with the stop 
		$\partial D^{n+1}$ and hence non-trivial information is retained.
		
		The Weinstein sector 
		$\overline{(T^*(L \times  F)^+, T^* \partial D^{n+1})}$ has the following critical points.  The domain 
		$T^*(L \times  F)^+$ has one index $0$ critical point (lying in $L \times F$) while the stop 
		$T^* \partial D^{n+1}$ has two critical points of index $0, n$, which give rise to index $1, n+1$ critical points  in a neighorhood $T^*\partial D^{n+1} \times T^*[0,1]$ of the stop $T^*\partial D^{n+1}$. The co-core of the index $n+1$ critical point is precisely a cotangent fiber of  $T^*\partial D^{n+1} \times T^*[0,1]$, which viewed in $T^*D^{n+1}$ is  $T^*_x D^{n+1}$ for some $x$ near $\partial D^{n+1}$.  After further Weinstein homotopy, we can assume that $x = 0 \times \{-1/2\} \in D^n \times D^1$.
		
		Next, we again use the fact that $L$ (and hence $L \times F$) is a disk. In this case, there is a Weinstein homotopy of $\overline{(T^*(L \times  F^1)^+, T^* \partial D^{n+1})}$ supported in a small neighorhood of $L \times F$ 
		to a new structure with an additional index $n$ and $n+1$ 
		critical point so that the co-core of the latter is $L \times F$; see~\cite{EGL}. Since this Weinstein homotopy is supported in a small neighborhood of $L \times F$, it does not create any new critical points outside and does not change the co-core $T^*_0 D^{n+1}$ of the other index $n+1$ critical point lying in $T^*\partial D^{n+1} \times T^*[0,1]$. 
		Finally, we can homotope this structure so that this index $0$ and index $1$ critical point lying in $T^* \partial D^{n+1}\times  T^*[0,1]$ are cancelled, producing a structure with a total of three critical points, one of index $n$ and two of index $n+1$ whose co-cores are $T^*_0 D \times T^*_{-1/2} D^1$ and $L \times F = L \times T^*_0 D^1$.
	\end{proof}
	
	\begin{remark}
		Proposition 
		\ref{prop: two_cocores} is  false unless we first stabilize the Lagrangian disk $L \subset T^*D^n$. 
		Otherwise $T^*D^n \setminus L \cup T^*_0 D^n = B^{2n} \cup H^{n-1} \setminus L$ would be subcritical, which is not always the case for Lagrangian disks in $L$. Indeed, by \cite{MS},  for every $n \ge 4$, there are Lagrangian disks $L \subset B^{2n}$ so that $B^{2n} \setminus L$ is not subcritical.
		However, when we stabilize, $L \times T^*_0 D^1$ becomes isotopic to the Lagrangian unknot and so $B^{2n+2} \setminus L \times T^*_0 D^1$ is automatically subcritical.  
		
		For general $L \subset T^* D^n$, there cannot be a Weinstein structure on $T^*D^{n+1}$ with $L\times T^*_0 D^1$  as the only co-core since this implies that $L \times T^*_0 D^1$ (and also $L$) is the generator of 
		the Fukaya category of $T^*D^{n+1}$. Therefore, Proposition 
		\ref{prop: two_cocores}, where there are precisely two co-cores, is the best possible scenario. 
	\end{remark}
	\begin{remark}\label{rem: use_hprinciple}
		Proposition \ref{prop: two_cocores} is the only place in the paper where we actually use any h-principle (Gromov's h-principle for subcritical isotropics~\cite{gromov_hprinciple}). We also observe that if $L \subset T^*D^n$ is already isotopic to the zero-section $D^n$ as a Lagrangian in the unstopped domain $B^{2n}$, then there is no need to stabilize or use this h-principle since we are already in the situation that $(T^*D^n, \lambda_{T^*D^n, L})$ is homotopic, relative $L$, to $\overline{(T^*L^+, T^* \partial D^n)}$. 
		For example, the $D_P$ disks from~\cite{abouzaid_seidel_recombination} used to construct $X[P^{-1}]$ are graphical and hence isotopic  	
		to $D^n \subset B^{2n}$.  Hence our proofs that $X[P^{-1}]$ (including the flexible case $X[\{0\}^{-1}]$)  is idempotent and independent of presentation does not depend on any h-principle.		
	\end{remark}

	Finally, we complete the proof that the functor $\times (T^*D^n)_L$ is idempotent, using the already proven fact that $\times (T^*D^{n+1})_{L\times T^*_0 D^1}$ is idempotent. 
	\begin{proof}[Proof of Theorem \ref{thm: idempotency_functor}]
		Suppose that $L \subset T^*D^n$ is a regular Lagrangian and we fix a Weinstein homotopic form $\lambda_{T^*D^n, L}$ on $T^*D^n$ so that $\lambda_{T^*D^n, L}|_L = 0$. Then by Proposition \ref{prop: two_cocores},  $\lambda_{T^*D^n,L} +\lambda_{T^*D^1, std}$ is homotopic, relative $L \times T^*_0 D^1$, to the structure  $\lambda_{T^*D^{n+1}, L\times T^*_0 D^1}$ which has only two co-cores. 
		Setting $X = T^*D^1$ in Proposition \ref{prop: comparison_natural_transformations}, we have that the morphism $$
		\eta_{T^*D^n}: T^*D^n \rightarrow (T^*D^n)_L
		$$
		which is defined via $\lambda_{T^*D^n, L}$,
		is conjugate in $\weincrit$ to the stabilized morphism 
		$$
		\eta_{T^*D^{n+1}}: T^*D^{n+1} \rightarrow (T^*D^{n+1})_{L\times  T^*_0 D^1}
		$$
		which is defined via 
		$\lambda_{T^*D^{n+1}, L\times T^*_0 D^1}$. More precisely, this is because the homotopy used to construct $\lambda_{T^*D^{n+1}, L\times T^*_0 D^1}$ consists of two steps, a movie of a boundary homotopy (precisely as in the proof of Proposition \ref{prop: comparison_natural_transformations}) 	and an interior homotopy, relative to $L \times T^*_0 D^1$, (which does not affect the proof of Proposition \ref{prop: comparison_natural_transformations}).  
		We also have already proven idempotency for $(T^*D^{n+1})_{L\times T^*_0 D^1}$,  {ie }that the morphism 
		$$
		Id_{(T^*D^{n+1})_{L\times T^*_0 D^1}} \times
		\eta_{T^*D^{n+1}}:
		(T^*D^n)_{L\times  T^*_0 D^1}\times	T^*D^{n+1} \rightarrow (T^*D^n)_{L\times  T^*_0 D^1}\times (T^*D^n)_{L\times  T^*_0 D^1}
		$$
		is an equivalence in $\weincrit$. 
		Since $(T^*D^{n+1})_{L\times  T^*_0 D^1}$ is equivalent to $(T^*D^n)_{L}$ in $\weincrit$ by Example \ref{ex: stabiilize T^*Dn_L},
		
		$$
		Id_{(T^*D^n)_L}\times	\eta_{T^*D^n}:(T^*D^n)_L\times T^*D^n \rightarrow (T^*D^n)_L\times (T^*D^n)_L
		$$	
		is also an equivalence in $\weincrit$ as desired. 	\end{proof}

	\begin{remark}\label{remark: two sets of primes, proof}  
		The construction of the simple Weinstein presentations in Proposition \ref{prop: two_cocores} can be used to prove the claim in Example \ref{example: two sets of primes} - that $(T^*D^n[P^{-1}]) \times (T^*D^n[Q^{-1})$ is equivalent to $(T^*D^n)[(P \cup Q)^{-1}]$ in $\weincrit$. More  generally, we prove that $(T^*D^n)_{L} \times (T^*D^n)_K$ is equivalent to $(T^*D^n)_{L \coprod K}$. 
		First note that $(T^*D^n)_L \times (T^*D^n)_K$ is equivalent
		to $((T^*D^n)_L)_K$ in $\weincrit$ by Theorem \ref{thm: comparison}. 
		Now, by Proposition \ref{prop: two_cocores}, $T^*D^{n+1}$ has 
		a Weinstein presentation with two co-cores $L \times T^*_0 D^1$ and $T^*_0 D^n \times T^*_{-1/2} D^1$. Therefore, we have the following equalities: 
		$$
		((T^*D^{n+1})_{L\times T^*_0 D^1})_{K\times T^*_0 D^1}
		= {(T^*D^{n+1})_{L\times T^*_0 D^1}} \backslash  K \times T^*_{-1/2} D^1 
		= T^*D^{n+1} \backslash (L \times T^*_0 D^1 \coprod K \times T^*_{-1/2} D^1 )
		$$
		The latter sector is precisely
		$(T^*D^{n+1})_{L \times T^*_0 D^1 \coprod K \times T^*_{-1/2} D^1}$. Assuming that $L \coprod K$  is disjointly embedded into $T^*D^n$ via a map $T^*\phi_1 \coprod T^*\phi_2: T^*D^n \coprod T^*D^n \hookrightarrow T^*D^n$, then 
		$(T^*D^n)_{L \coprod K}$ is equivalent to 
		$(T^*D^{n+1})_{L \times T^*_0 D^1 \coprod K \times T^*_{0} D^1}$ by Example \ref{ex: stabiilize T^*Dn_L}, which is isomorphic to 	 $(T^*D^{n+1})_{L \times T^*_0 D^1 \coprod K \times T^*_{-1/2} D^1}$.
		
		This proves that  $(T^*D^n)[(P \coprod Q)^{-1}]$ is equivalent to $T^*D^n[P^{-1}] \times
		T^*D^n[Q^{-1}]$, where we allow possibly repeated integers in the set $P \coprod Q$. However by Theorem \ref{thm: idempotency_functor}, $(T^*D^n)[(P \coprod Q)^{-1}]$  is equivalent to $(T^*D^n)[(P \cup Q)^{-1}]$, where no integers are repeated in the set $P \cup Q$. 
	\end{remark}
	
	\subsubsection{Unlinking, after stabilization}
	
	In this section, we briefly discuss certain unlinking phenomena which appeared implicitly in the proof of Theorem \ref{thm: idempotency_functor}. 	A crucial step in the proof was the fact that
	$$
	C_1^{L \times T^*_0 D^1} \coprod C_2 
	$$ 
	is a parallel link. This fact was easy to check since this link was the stabilization via two different cotangent fibers $T^*_{-1/2} D^1, T^*_{0} D^1$ (stabilization was already used to prove that Proposition \ref{prop: two_cocores}).  
	
	Furthermore, in Example \ref{example: non-homotopy}, the main issue was that it is not clear that the Lagrangian link $(T^*_{x_1} S^n)^L \coprod (T^*_{x_2} S^n)^L$ is parallel, even though the two components of this link are isotopic to each other; see Figure  \ref{fig: parallel_links} for a schematic depiction of the situation. 	Indeed, if this link were parallel, then
	$T^*S^n \backslash (T^*_{x_1} S^n)^L \coprod (T^*_{x_2} S^n)^L$ would coincide with 
	$T^*S^n \backslash (T^*_{x_1} S^n)^L$ up to subcritical handles by Proposition \ref{prop: parallel_links_unknot} 
	(and so $T^*S^n_L$ and ${T^*S^n}'_L$ would be equivalent up to subcritical handles). 
	However, note that $(T^*_{x_1} S^n)^L \coprod (T^*_{x_2} S^n)^L$ is \textit{component-wise} isotopic to a parallel link $(T^*_{x_1} S^n)^L \coprod {(T^*_{x_1} S^n)^L}'$ (obtained by taking a small push-off of $(T^*_{x_1} S^n)^L$ in its Weinstein neighborhood) since $(T^*_{x_2} S^n)^L$ is isotopic to $(T^*_{x_1} S^n)^L$.
	
	\begin{figure}
		\centering
		\includegraphics[scale=0.2]{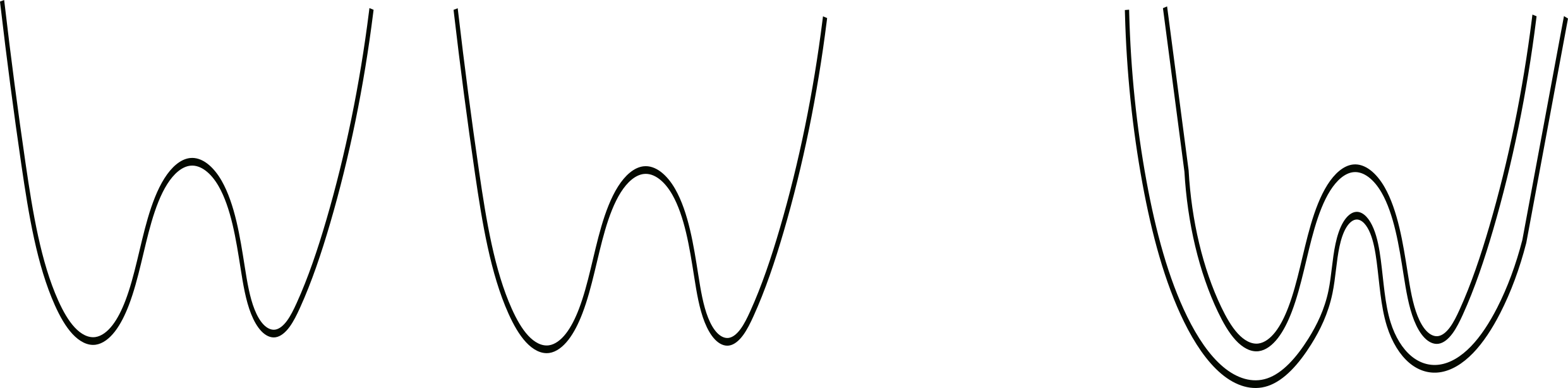}
		\caption{Schematic depiction of non-parallel links 
			$(T^*_{x_1} S^n)^L \coprod (T^*_{x_2} S^n)^L$ 		on the left versus a parallel link $(T^*_{x_1} S^n)^L \coprod {(T^*_{x_1} S^n)^L}'$ on the right. 
		}
		\label{fig: parallel_links}
	\end{figure}
	
	More generally, consider two Lagrangian links $L \coprod K$ and $L' \coprod K' \subset  X$  that are component-wise isotopic; that is, $L$ is Lagrangian isotopic to $L'$ and $K$ is Lagrangian isotopic to $K'$ in $X$. This does not imply that $L \coprod K$ is Lagrangian isotopic as a link to $L' \coprod K'$ since the isotopy from $L$ to $L'$ may intersect the isotopy from $K$ to $K'$, {ie }$L \coprod K$ are linked differently from $L' \coprod K'$; see \cite{mak_smith-links} for examples of Lagrangian linking. In the following proposition, we show that all Lagrangian links become unlinked after a single stabilization, which as explained above is a key phenomena underlying Theorems \ref{thm: comparison} and \ref{thm: idempotency_functor}. 
	
	\begin{proposition}\label{prop: unlinking_stabilization}
		Let $L \coprod K, L' \coprod K' \subset  X$ be Lagrangian links that are component-wise isotopic. Then the stabilized Lagrangian links $L\times T^*_0 D^1 \coprod K \times T^*_0 D^1, L' \times T^*_0 D^1 \coprod K' \times T^*_0 D^1$ are isotopic as links in $X \times T^*D^1$.
	\end{proposition}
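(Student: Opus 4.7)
\medskip

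\noindent\textbf{Proof plan.} The obstruction to isotoping $L \coprod K$ to $L' \coprod K'$ as a link is that the individual Lagrangian isotopies $L_t$ (from $L$ to $L'$) and $K_t$ (from $K$ to $K'$) may fail to remain disjoint from each other at intermediate times $t$. The strategy is to exploit the extra base direction provided by the stabilization $T^*D^1$: by sliding the cotangent fiber factor of one component to a different point of $D^1$, we force the two stabilized Lagrangians to project to disjoint points in the $D^1$-base, and thereby decouple their isotopies.

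Concretely, the plan is to build the link isotopy in $X \times T^*D^1$ as a concatenation of four steps, starting from $(L \times T^*_0 D^1) \coprod (K \times T^*_0 D^1)$:
\begin{enumerate}
\item[(i)] For $s \in [0, \epsilon]$, carry out the Lagrangian isotopy $(L \times T^*_s D^1) \coprod (K \times T^*_0 D^1)$, sliding the fiber factor of the first component off the $0$-section of $D^1$. At $s=0$ the two factors are disjoint because $L \cap K = \emptyset$, and for $s > 0$ they project to disjoint points $\{s\}, \{0\} \subset D^1$.
\item[(ii)] For $t \in [0,1]$, carry out $(L_t \times T^*_\epsilon D^1) \coprod (K \times T^*_0 D^1)$, using the given isotopy from $L$ to $L'$; throughout, the two components project to $\{\epsilon\}$ and $\{0\}$ and so remain automatically disjoint.
\item[(iii)] For $t \in [0,1]$, carry out $(L' \times T^*_\epsilon D^1) \coprod (K_t \times T^*_0 D^1)$, using the given isotopy from $K$ to $K'$; disjointness is again automatic from base projections.
\item[(iv)] For $s \in [0, \epsilon]$, slide the fiber factor back: $(L' \times T^*_{\epsilon - s} D^1) \coprod (K' \times T^*_0 D^1)$. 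Intermediate slices are disjoint by base projection, and at $s = \epsilon$ disjointness holds because $L' \cap K' = \emptyset$.
\end{enumerate}
Each of these is a Lagrangian isotopy, since $T^*_s D^1$ is a Lagrangian (cotangent fiber) in $T^*D^1$ for each $s \in D^1$, hence the product with any Lagrangian of $X$ remains Lagrangian throughout; and each stage has been arranged to preserve the embedded link condition.

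No real obstacle is anticipated: the construction is the Lagrangian avatar of the classical ``push into an extra dimension to unlink'' idea, and the only care required is to check Lagrangian-ness and disjointness at each stage of the concatenation, which follows immediately from the product structure and the base-projection argument above. If one wishes to phrase the isotopies in the strict sense of Lemma~\ref{lemma: Lagrangian_isotopy}, i.e.\ through pairs $(L_t, \lambda_{L_t})$ with the Liouville form vanishing on $L_t$, one invokes that lemma componentwise on the factors $X$ and $T^*D^1$ and takes the sum of the resulting Liouville homotopies on the product.
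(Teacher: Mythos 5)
Your proposal is correct and follows essentially the same route as the paper: displace one component to a different cotangent fiber of $T^*D^1$, run the two given componentwise isotopies while the base projections keep the components disjoint, then slide back. The only cosmetic differences are that the paper displaces the second component (to $T^*_{1/2}D^1$) rather than the first and runs the two isotopies simultaneously rather than sequentially, and it likewise defers the exactness bookkeeping to Lemma~\ref{lemma: Lagrangian_isotopy} in a remark.
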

	
	\begin{proof}
		First, the link $L\times T^*_0 D^1 \coprod K \times T^*_0 D^1$ is isotopic to  $L\times T^*_0 D^1 \coprod K \times T^*_{1/2} D^1$ obtained by isotoping $T^*_0 D^1$ to $T^*_{1/2} D^1$ in $T^*D^1$ on the second component of the link, keeping $X$ coordinates fixed.  Then we isotope $L\times T^*_0 D^1$ to $L' \times T^*_0 D^1$ and simultaneously isotope $K\times T^*_{1/2} D^1$ to 
		$K'\times T^*_{1/2} D^1$, keeping the  $T^*D^1$ coordinates fixed. This is an isotopy of Lagrangian links since the two component-wise isotopies occur at different cotangent fibers $T^*_0 D^1$ and $T^*_{1/2} D^1$. 
		Finally, we isotope the link $L' \times T^*_0 D^1 \coprod K' \times T^*_{1/2} D^1$ to 
		$L' \times T^*_0 D^1 \coprod K' \times T^*_{0} D^1$
		by isotoping 	$T^*_{1/2} D^1$ back to $T^*_{0} D^1$ in $T^*D^1$ on the second component, again keeping the $X$ coordinates fixed.
	\end{proof}

	\bibliographystyle{abbrv}
	\bibliography{sources}

\end{document}